\newtheorem{theorem}{Theorem}[section]
\newtheorem{corollary}[theorem]{Corollary}
\newtheorem{lemma}[theorem]{Lemma}
\newtheorem{proposition}[theorem]{Proposition}
\par\noindent{\bf Proposition \ref{res:hiper}.}\!\!
\par\noindent{\bf Theorem \ref{result43}.}\!\!
\par\noindent{\it Sketch of the proof}.  
\hfill\linebreak[2]\hspace*{\fill}$\circlearrowleft$}
\par\noindent{\it Proof of Proposition }\ref{prop:stab:smc}.  
\hfill\linebreak[2]\hspace*{\fill}$\circlearrowleft$}
\par\noindent{\it Proof of Propositions }\ref{adap:mon}{\it and }\ref{simult:adap}.\!\!\!
\hfill\linebreak[2]\hspace*{\fill}$\circlearrowleft$}
\theoremstyle{definition}
       \newtheorem{definition}[theorem]{Definition}
       \newtheorem{remark}[theorem]{Remark}
       \newtheorem{example}[theorem]{Example}
       \newtheorem{parrafo}[theorem]{{\!}}}
\numberwithin{equation}{theorem}
\DeclareMathOperator{\Max}{\underline{Max}}
\DeclareMathOperator{\mult}{mult}
\DeclareMathOperator{\Proj}{Proj}
\DeclareMathOperator{\Sing}{Sing}
\DeclareMathOperator{\Spec}{Spec}
\DeclareMathOperator{\In}{In}
\DeclareMathOperator{\Gr}{Gr}
\DeclareMathOperator{\Bl}{Bl}
\newcommand{\Diff}{\mathit{Diff}}
\DeclareMathOperator{\characteristic}{char}
\newcommand{\G}{{\mathcal G}}
\renewcommand{\H}{{\mathcal H}}
\newcommand{\q}{{\mathfrak q}}
\newcommand{\p}{{\mathfrak p}}
\newcommand{\TODO}[1]{\bgroup\color{red}\textbf{[}#1\textbf{]}\egroup}
\title{Finite morphisms and simultaneous reduction of the multiplicity}
\author{Carlos Abad}
\author{Ana Bravo}
\author{Orlando E. Villamayor U.}
\thanks{The authors were partially  supported from the Spanish Ministry of Economy and Competitiveness, through the ``Severo Ochoa'' Programme for Centres of Excellence in R\&D (SEV-2015-0554), and through MTM2015-68524-P (MINECO/FEDER)}
\address{Dpto. Matem\'aticas,  Universidad Aut\'onoma de Madrid and Instituto de Ciencias Matem\'aticas CSIC-UAM-UC3M-UCM, Ciudad Universitaria de Cantoblanco, 28049 Madrid, Spain}
\email[Carlos Abad]{carlos.abad@uam.es}
\email[Ana Bravo]{ana.bravo@uam.es}
\email[Orlando E. Villamayor U.]{villamayor@uam.es}
\keywords{Multiplicity, finite morphisms, singularities,  Rees algebras}
\subjclass[2010]{14E15}
\begin{document}

\begin{abstract}
Let $X$ be a singular algebraic variety defined over a field $k$, with quotient field $K(X)$. Let $s \geq 2$ be the highest multiplicity of $X$ and $F_s(X)$ the set of points of multiplicity $s$. If $Y\subset F_s(X)$ is a regular center and $X\leftarrow X_1$ is the blow up at $Y$, then the  highest multiplicity of $X_1$ is less than or equal to $s$.  A sequence of blow ups at regular centers $Y_i \subset F_s(X_i)$,  say $X \leftarrow X_1 \leftarrow \dotsb \leftarrow X_n$, is said to be a \emph {simplification} of the multiplicity if the maximum multiplicity of $X_n$ is strictly lower than that of $X$, that is, if $F_s(X_n) $ is empty. In characteristic zero there is an algorithm which assigns to each $X$ a unique simplification of the multiplicity. However, the problem remains open when the characteristic is positive. 

In this paper  we will study finite dominant morphisms between singular varieties  $\beta: X'\to X$ of generic rank $r \geq 1$ (i.e., $[K(X'):K(X)]=r$). We will see that, when imposing suitable conditions on $\beta$, there is a strong link between the strata of maximum multiplicity of $X$ and $X'$, say $F_{s}(X)$ and $F_{rs}(X')$ respectively. In such case, we will say that the morphism is strongly transversal. When  $\beta: X'\to X$ is strongly transversal  one can obtain information about the simplification of the multiplicity of $X$ from that of $X'$ and vice versa. Finally, we will see that given a singular variety $X$ and   a finite field extension $L$ of $K(X)$ of rank $r \geq 1$, one can construct (at least locally, in \'etale topology) a strongly transversal morphism $\beta: X'\to X$, where $X'$ has quotient field $L$.
\end{abstract}

\maketitle

\setcounter{tocdepth}{1}
\tableofcontents

\section{Introduction}
Let $X$ be a variety of a perfect field $k$. Along these notes, all varieties will be assumed to be reduced and irreducible. The multiplicity along points of $X$ defines an upper semi-continuous function, say $\mult : X \to \mathbb{N}$, which assigns to each point $\xi \in X$ the multiplicity of the local ring $\mathcal{O}_{X,\xi}$ with respect to its maximal ideal. Under the previous hypotheses, the multiplicity function along points of $X$ has the following property: $X$ is regular at a point $\xi$ if and only if $\mult(\xi) = 1$. Thus, one can characterize the set of singular points of $X$ by
\[
	\Sing(X) = \{ \xi \in X \mid \mult(\xi) \geq 2 \}.
\]%

Assume that the scheme $X$ is singular, i.e., that $\max\mult(X) \geq 2$. Set $s = \max\mult(X)$ and let $F_s(X)$ denote the set of points of maximum multiplicity of $X$.   It can be  proved that the multiplicity does not increase when blowing up  along regular centers contained in  $F_s(X)$ (for a proof of this fact see \cite{Orbanz};  there it is mentioned   that the statement was first proved by Dade    in \cite{D}).   Namely, given a closed regular center $Y \subset F_s(X)$ and the blow up of $X$ along $Y$, say $\pi_1 : X_1 = \Bl_Y(X) \to X$, one has that
\[
	\mult(\xi_1) \leq \mult(\varphi(\xi_1))
\]%
for every $\xi_1 \in X_1$. Thus $\max\mult(X_1) \leq \max\mult(X)$. A blow up of this form will be called a \emph{$F_s$-permissible blow up}. Attending to Dade's result, it is natural ask whether there exists a sequence of $F_s$-permissible blow ups, say
\[ \xymatrix @R=0pt @C=30pt {
	X=X_0 & \ar[l]_-{\pi_1} X_1 &  \ar[l]_-{\pi_2}  \ldots &  \ar[l]_-{\pi_l} X_l,
} \]%
such that
\[
	\max\mult(X_0)
	= \dotsb = \max\mult(X_{l-1}) > \max\mult(X_l).
\]%
A sequence with this property, whenever it exists, will be called a \emph{simplification of $F_s(X)$}. Note that, if one could find a simplification of $F_s(X)$ for every $s \geq 2$ and for every $X$ with $\max\mult(X) = s$, then one could also find a resolution of singularities of $X$ by iterating this process (the resolution would be achieved when the maximum multiplicity of the scheme, after a finite number of blow ups, lowers to $1$). This approach differs from Hironaka's proof of resolution of singularities, as he uses the Hilbert-Samuel function instead of the multiplicity. 
We refer here to \cite{CJS} where the semicontinuity of this invariant along points of a singular scheme is studied in the first chapter (Theorem1.34),
and the behavior under blowups is carefully treated in the second chapter.

The existence of a simplification of $F_s(X)$ has already been proved for the case in which $X$ is a singular variety defined over a field $k$ of characteristic zero (cf. \cite{multiplicidad}). However, the problem remains open in the case of positive characteristic. 

 In this work we will study  finite morphisms between singular varieties,  $\beta : X' \to X$. We will show that, under suitable conditions, there is a link between the stratum of maximum multiplicity of $X$, say $F_s(X)$, and that of $X'$, say $F_{s'}(X')$. In this setting we present  a notion of {\em blow up  of a finite morphism} so that if $Y\subset F_{s'}(Y)$ is regular, then $\beta(Y)\subset F_s(X)$ is regular too, and there is a commtutative diagram of blow ups and finite morphisms: 
 $$\xymatrix@R=15pt{X' \ar[d]^{\beta} & \ar[l] X'_1 \ar[d]^{\beta_1} \\ 
 X  & \ar[l] X_1.  } $$
(notice that this parallels the situation in which $X$ is embedded in some scheme $V$ and   $Y\subset X$ is a center; then after the blow up at $Y$, there is again an embedding of the strict transform of $X$ in the transform of $V$, say $X_1\subset V_1$ and a commutative diagram of blow ups and immersions). 

Now the idea is that, given a singular variety $X$ with maximum multiplicity $s \geq 2$, one can construct a suitable variety $X'$ with maximum multiplicity $s' \geq 2$, endowed with a suitable finite morphism $\beta : X' \to X$, in such way that one can obtain information about the process of simplification of $F_s(X)$ from that of $F_{s'}(X')$. These ideas are developed in more detail in the following lines.

\medskip

\paragraph*{\bf Transversal morphisms}

Let $\beta : X' \to X$ be a finite and dominant morphism of singular varieties over a perfect field $k$. Set $s = \max\mult(X)$ and let $r = [K(X'):K(X)]$ denote the generic rank of the morphism $\beta$. Using Zariski's multiplicity  formula for finite projections (see Theorem \ref{MultForm}) one can prove that the maximum multiplicity on $X'$ is bounded by $rs$. Moreover, if $\max\mult(X') = rs$, one can show that the image of $F_{rs}(X')$ sits inside $F_s(X)$ and that $F_{rs}(X')$ is mapped homeomorphically to its image in $F_s(X)$ (see section~\ref{condicion_ast}). When the previous equality holds, i.e., when $\max\mult(X') = rs$, we will say that the morphism $\beta : X' \to X$ is \emph{transversal}.

In this work we will study conditions under which, given a transversal morphism $\beta : X' \to X$, the set $F_{rs}(X')$ is mapped surjectively onto $F_s(X)$, in such a way that $F_{rs}(X')$ and $F_s(X)$ are homeomorphic. In addition, we will require   this condition to be  preserved by sequences of $F_s$-permissible blow ups. Namely, we will require that any sequence of $F_s$-permissible blow ups over $X$, say
\[ \xymatrix @R=0pt @C=30pt {
	X=X_0 & \ar[l]_-{\pi_1} X_1 &  \ar[l]_-{\pi_2}  \ldots &  \ar[l]_-{\pi_l} X_l
} \]%
induces a sequence of $F_{rs}$-permissible blow ups over $X'$ and a commutative 
\[ \xymatrix@R=15pt{
 	X' \ar[d]^{\beta} &
 		X'_1 \ar[l] \ar[d]^{\beta_1} &
 		\dotsb \ar[l] &
 		X'_{l-1} \ar[l] \ar[d]^{\beta_{l-1}} &
 		X'_l \ar[l] \ar[d]^{\beta_l} \\
 	X &
 		X_1 \ar[l] &
 		\dotsb \ar[l] &
 		X_{l-1} \ar[l] &
 		X_l, \ar[l]
} \]%
with the following properties:
\begin{enumerate}[i\textup{)}]
\item $\beta_i : X'_i \to X_i$ is is transversal for $i \leq l-1$;
\item $F_{rs}(X'_i)$ is homeomorphic to $F_s(X_i)$ for $i \leq l-1$; and
\item $\max\mult(X'_l) < rs$ if and only if $\max\mult(X_i) < s$.
\end{enumerate}
When a transversal morphism $\beta : X' \to X$ satisfies these properties for every sequence of $F_s$-permissible blow ups over $X$, we shall say that $\beta$ is \emph{strongly transversal}. In section~\ref{elimination1} we will show that, if two varieties $X$ and $X'$ are linked by a strongly transversal morphism, say $\beta : X' \to X$, then the processes of simplification of $F_s(X)$ and $F_{rs}(X')$ are equivalent, in the sense that any simplification of $F_s(X)$ induces a simplification of $F_{rs}(X')$ and vice-versa.

\medskip

\paragraph*{\bf Characterization of strong transversality} Let $X$ be a singular variety defined over a perfect field $k$ with maximum multiplicity $s \geq 2$. There is a graded $\mathcal{O}_X$-algebra that one can naturally attach to the stratum of maximum multiplicity of $X$, which we shall denote by $\G_X$. In characteristic zero, this algebra encodes information about the process simplification of $F_s(X)$ (see Remark~\ref{equivalent_resolutions}). Moreover, given a transversal morphism $\beta : X' \to X$, there is a natural inclusion of $\G_X$ into $\G_{X'}$ (see \cite[Proposition~6.3]{Carlos}). In section~\ref{demo_theo} we will show that strong transversality can be characterized in terms of the algebras $\G_X$ and $\G_{X'}$. Namely, in Theorem~\ref{thm:strong-homeo} we will prove that, if $k$ has characteristic zero, then $\beta : X' \to X$ is strongly transversal if and only if $\G_{X'}$ is integral over $\G_X$. In the case of positive characteristic, the characterization is just partial, as strong transversality implies that $\G_{X'}$ is integral over $\G_X$ but the converse fails in general. In Example~\ref{contraejemplo} we give a counterexample for the latter case.

\medskip

\paragraph*{\bf Construction of strongly transversal morphisms} The last section is devoted to the construction of strongly transversal morphisms. Consider a singular variety $X$ over a perfect field $k$ with maximum multiplicity $s \geq 2$ and field of fractions $K$. Given a field extension $L \supset K$ of rank $r = [L:K]$, we would like to find a singular variety $X'$ with field of fractions $L$ endowed with a strongly transversal morphism $\beta : X' \to X$. Although we do not know whether it is possible to find such $X'$ with a strongly transversal morphism $\beta : X' \to X$ in general, at least we can prove that it possible to do a similar construction locally in \'etale topology, as explained below.

Let $\xi \in F_s(X)$ be a fixed point in the stratum of maximum multiplicity of $X$. For an \'etale neighborhood of $X$ at $\xi$, say $\widetilde{X} \to X$, let $\widetilde{K}$ denote the field of fractions of $\widetilde{X}$, and set $\widetilde{L} = L \otimes_K \widetilde{K}$ (as it will be explained, when $X$ normal, $\widetilde{X}$ can be assumed to be irreducible). Note that, in this setting, $\widetilde{L}$ might not be a field. However, as by base change it is \'etale over $L$, it should be reduced and, as it is finite (of rank $r$) over $\widetilde{K}$, it should be a direct sum of fields over $\widetilde{K}$. In Theorem~\ref{rango_r}  we will show that, for a suitable \'etale neighborhood $\widetilde{X}$ of $X$ at $\xi$, it is possible to find a scheme $\widetilde{Z}$ with quotient ring $\widetilde{L}$ endowed with a strongly transversal morphism $\widetilde{\beta} : \widetilde{Z} \to \widetilde{X}$.

To conclude, we mention that,  for technical reasons, to prove this last result we will assume that $X$ is normal. Actually, the construction can also be done without this assumption: in the general case, however,   some details  are   rather technical  while the main ideas of the construction  remain the same. 
 
\medskip

\paragraph*{\bf On the organization of the paper}  The paper is organized in three parts. 
 In Part \ref{parte_1} we study transversal morphisms.  Section \ref{condicion_ast} is devoted to  presenting the notion of transversal morphisms;  properties of such morphisms are studied in  Proposition \ref{homeomorphism} and Corollary \ref{condicion_estrella_B}. The main purpose of  section \ref{sec:mult-and-int-closure}  is to give a local presentation of transversal morphisms. This is the content of Lemma \ref{lm:primera_pres}.  In Section \ref{Transv_transformations} we study the compatibility of transversalitiy with blow ups at regular equimultiple centers  (the {\em blow up of a finite morphism}). The main result is Theorem \ref{thm:Frs-sequence}, which leads us naturally to the notion of strongly transversal morphism (see Definition \ref{def:strongly-transversal}). 
 
 Part \ref{parte_2}  is entirely devoted to present the basic results on Rees algebras that we need to state and proof Theorems \ref{thm:strong-homeo} and \ref{rango_r}. Rees algebras will allow us to work with equations  with weights: this turns out to be a useful language to describe the maximum multiplicity locus of an algebraic variety defined over a perfect field $k$. 
 
To conclude, strongly transversal morphisms are studied in Part  \ref{parte_3}.  Here Theorems  \ref{thm:strong-homeo}  and \ref{rango_r} are proven.

We thank A. Benito, D. Sulca, and S. Encinas for stimulating discussions on this subject.


\part{Transversality} \label{parte_1}

\section{Transversality and finite morphisms} 
\label{condicion_ast}

Let $(R,M)$ be a local Noetherian ring, and let $\mathfrak{a} \subset R$ be an $M$-primary ideal. Observe that, for $n > 0$, the quotient ring $R / \mathfrak{a}^n$ has finite length when regarded as an $R$-module. Let us denote this length by $\lambda(R / \mathfrak{a}^n)$. It can be shown that, for $n \gg 0$, the value of $\lambda(R / \mathfrak{a}^n)$ is given by a polynomial in $n$ with rational coefficients, say
$\lambda(R / \mathfrak{a}^n)
	= c_d \cdot n^d + c_{d-1} \cdot n^{d-1} + \dotsb + c_0
	\in \mathbb{Q}[n],
$
where $d = \dim(R)$, which is known as the Hilbert-Samuel polynomial of $R$ with respect to $\mathfrak{a}$. In addition, it can be shown that $c_d = \frac{e}{d!}$ for some $e \in \mathbb{N}$. The integer $e$ is called the \emph{multiplicity} of $R$ with respect to $\mathfrak{a}$, and we shall denote it by $e_{\mathfrak{a}}(R)$. The \emph{multiplicity} of a Noetherian scheme $X$ at a point $\xi \in X$ is defined as that of the local ring $\mathcal{O}_{X,\xi}$ with respect to its maximal ideal.

\begin{parrafo} \label{condiciones}  {\bf General setting and notation.} Let $X$ and $X'$ be algebraic varieties over a perfect field $k$ with quotient fields $K = K(X)$ and $L = K(X')$ respectively. Suppose that $\beta: X' \to X$ is a finite dominant morphism of \emph{generic rank} $r := [K(X'):K(X)]$. We will be assuming that the maximum multiplicity of $X$ is $s \geq 1$, and will denote by $F_s(X)$ the (closed) set of points of maximum multiplicity $s$ of $X$. As we will see, under these conditions, the maximum multiplicity at points of $X'$ is at most $rs$, and we denote by $F_{sr}(X')$ the closed set (possibly empty) of points of multiplicity $sr$ of $X'$. Since our arguments will be of local nature, in several parts of this paper  we may assume that both, $X$ and $X'$ are affine, say $X = \Spec(B)$ and $X' = \Spec(B')$,  and that there is a finite extension $B\subset B'$. In this case we will use $F_s(B)$ (resp. $F_{sr}(B')$) to denote $F_s(X)$ (resp. $F_{sr}(X')$).   Moreover, we will see that the previous discussion also applies to the case in which $X'$ is an equidimensional scheme of finite type over $k$ with total ring of fractions $K(X')$.
\end{parrafo}

\begin{theorem}[{Zariski's multiplicity  formula for finite projections  \cite[Theorem 24, p. 297]{ZS}}] \label{MultForm}  Let 
$(B,{\mathfrak m})$ be a local domain, and let $B'$ be a finite extension of $B$. Let $K$ denote the quotient field of $B$, and let $L = K \otimes_B B'$.  Let $M_1, \ldots, M_r$ denote the maximal ideals of the semi-local ring $B'$, and assume that $\dim B'_{M_i}=\dim B$, $ i=1, \dots ,r$. Then
$$e_B({\mathfrak m})[L:K] = \sum_{1\leq i \leq r} e_{B'_{M_i}}({\mathfrak m} B_{M_i}) [k_i:k],$$
where $k_i$ is the residue field of $ B'_{M_i}$, $k$ is the residue field of $(B,{\mathfrak m})$, and $[L:K]= \dim_K L$.

\end{theorem}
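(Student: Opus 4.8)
The plan is to compute the multiplicity of $B'$, viewed as a finitely generated module over the local domain $(B,\mathfrak{m})$, in two different ways and then equate the results. Throughout write $d = \dim B$ and $k = B/\mathfrak{m}$, and denote by $\lambda_B(\cdot)$ length as a $B$-module. The key starting observation is that, since $B \hookrightarrow B'$ forces $\operatorname{Supp}_B(B') = \Spec B$, the module $B'$ has dimension $d$ over $B$, so the Hilbert-Samuel function $n \mapsto \lambda_B(B'/\mathfrak{m}^n B')$ is eventually a polynomial of degree exactly $d$, whose leading coefficient equals $e_{\mathfrak{m}}(B')/d!$. This module multiplicity $e_{\mathfrak{m}}(B')$ is the common quantity I will express from both sides.

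First I would evaluate $e_{\mathfrak{m}}(B')$ from the side of $B$. By the associativity (additivity) formula for module multiplicities,
$$e_{\mathfrak{m}}(B') = \sum_{\dim(B/\mathfrak{p}) = d} \lambda_{B_{\mathfrak{p}}}\big((B')_{\mathfrak{p}}\big)\, e_{\mathfrak{m}}(B/\mathfrak{p}).$$
Since $B$ is a domain, the only prime with $\dim(B/\mathfrak{p}) = d$ is $\mathfrak{p} = (0)$, for which $B_{(0)} = K$ and $(B')_{(0)} = K \otimes_B B' = L$. Hence the sum collapses to the single term $\dim_K(L)\, e_{\mathfrak{m}}(B) = [L:K]\, e_{\mathfrak{m}}(B)$, which is the left-hand side of the desired identity.

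Next I would evaluate the same $e_{\mathfrak{m}}(B')$ from the side of $B'$. As $B'/\mathfrak{m}^n B'$ is Artinian with maximal ideals among $M_1,\dots,M_r$ (all contracting to $\mathfrak{m}$), it splits as the product of its localizations, so that
$$\lambda_B(B'/\mathfrak{m}^n B') = \sum_{i=1}^r \lambda_B\big(B'_{M_i}/(\mathfrak{m}B'_{M_i})^n\big).$$
Using the change-of-ring formula for lengths, $\lambda_B(N) = [k_i:k]\,\lambda_{B'_{M_i}}(N)$ for any finite-length $B'_{M_i}$-module $N$ (each composition factor $k_i$ has $B$-length $[k_i:k]$ because $\mathfrak{m}$ annihilates it), each summand becomes $[k_i:k]\,\lambda_{B'_{M_i}}\big(B'_{M_i}/(\mathfrak{m}B'_{M_i})^n\big)$. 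This is precisely where the hypothesis $\dim B'_{M_i} = d$ enters: it guarantees that every one of these $r$ Hilbert-Samuel polynomials has degree exactly $d$, so that all of them contribute to the degree-$d$ coefficient. Reading off that coefficient gives
$$e_{\mathfrak{m}}(B') = \sum_{i=1}^r [k_i:k]\, e_{B'_{M_i}}(\mathfrak{m}B'_{M_i}),$$
and equating the two expressions for $e_{\mathfrak{m}}(B')$ yields the formula.

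I expect the main obstacle to be the careful handling of the dimension hypothesis: one must check that without $\dim B'_{M_i} = d$ some of the localizations would contribute Hilbert-Samuel polynomials of degree strictly less than $d$, which would then silently drop out of the leading coefficient and destroy the additivity. The remaining points, namely the Artinian product decomposition, the commutation $\mathfrak{m}^n B'_{M_i} = (\mathfrak{m}B'_{M_i})^n$, and the change-of-ring length formula, are standard, but they should be assembled precisely so that the comparison of leading terms is fully justified.
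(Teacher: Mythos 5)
Your argument is correct: computing the leading coefficient of $n \mapsto \lambda_B(B'/\mathfrak{m}^nB')$ in two ways --- via the associativity formula over the domain $B$ (where only $\mathfrak{p}=(0)$ contributes, giving $[L:K]\,e_B(\mathfrak{m})$) and via the Artinian splitting of $B'/\mathfrak{m}^nB'$ at the $M_i$ together with the change-of-rings factor $[k_i:k]$ --- is exactly the classical proof, and you correctly identify that the hypothesis $\dim B'_{M_i}=\dim B$ is what keeps every local contribution visible in the degree-$d$ coefficient. The paper gives no proof of this statement, citing it directly from Zariski--Samuel, where essentially this same computation is carried out.
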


As a consequence of Zariski's formula, if   $P$ is a prime ideal in $B'$ and $\p=P\cap B$, one has that 
\begin{equation}
\label{cota_multi}
e_{B'_{P}}(P B'_{P})\leq   e_{B'_{P}}(\p B'_{P})  \leq e_{B_{\p}}(\p B_{\p})[L:K].
\end{equation} 
That is,  the maximum of the multiplicity at points of $\Spec(B')=X'$ is bounded by the generic rank of the projection times the maximum of the multiplicity at points of $\Spec(B)=X$. Namely,
\begin{equation}\label{eq222} \max\mult(X') \leq [L:K] \cdot \max\mult(X).
\end{equation}

\begin{parrafo} \label{condicion_estrella} {\bf The (*) condition.} With the same notation and hypotheses as in Theorem \ref{MultForm} 
we will say that {\em condition  \textup{(*)} holds at $P\in \text{Spec}(B')$} if: 
$$(*)  \  e_{B'_P}(PB'_P)= e_{B_{\p}}(\p B_{\p})[L:K].$$
Note that, in particular, $e_{B'_P}(PB'_P) \leq e_{B'_P}(\p B'_P)$.  Hence,  condition (*) is satisfied at $P$ if and only the following three conditions hold simultaneously:
\begin{enumerate}
\item[(i)] $P$ is the only prime in $B'$ dominating ${\p}$ (i.e., $B'_P=B'\otimes_BB_{\p}$);

\item[(ii)]  $B_{\p}/{\p}B_{\p}=B'_P/PB'_P$;


\item[(iii)] $e_{B'_P}(\p B'_P)=e_{B'_P}(PB'_P)$.
\end{enumerate}
In particular, condition (*) necessarily holds for all primes $P\subset B'$  where the multiplicity is $sr$, where $s$ is the maximum multiplicity in $\Spec(B)$,  and $r=[L:K]$. 
\end{parrafo}

\begin{remark} Suppose that  $B$ and $B'$ are formally equidimensional locally at any prime. Then, 
as we will see in Section~\ref{sec:mult-and-int-closure}, condition (iii) is equivalent to saying that $\p B'_P$ is a reduction of $P B'_P$, i.e., that the ideal $P B'_P$ is integral over $\p B'_P$ (cf. Theorem~\ref{Rees}).
\end{remark}

\begin{definition} \label{def:transversal}
Let $X$ be a variety over a perfect field $k$, and let $X'$ be either a variety or a reduced equidimensional scheme of finite type over $k$. Let $K$ be the field of rational functions of $X$ and let $L$ be the total ring of fractions of $X'$. We will say that a $k$-morphism $\beta : X' \to X$ is \emph{transversal} if it is finite and dominant, and the following equality holds:
\[
\max\mult(X')= [L:K] \cdot \max\mult(X).
\]%
We will say that an extension of $k$-algebras of finite type $B \subset B'$, where $B$ is a domain, and $B'$ is reduced and equidimensional, is \emph{transversal} if the corresponding morphism of schemes, say $\beta : \Spec(B') \to \Spec(B)$, is so.\color{black}	
	
\end{definition}

\begin{remark}
By definition, if $\beta : X' \to X$ is a transversal morphism of generic rank $r = [L:K]$ and $X$ has maximum multiplicity $s \geq 1$, then $X'$ has maximum multiplicity $rs$ and the equality is attained in \eqref{cota_multi} for all points in $F_{rs}(X')$. Thus condition (*) holds at all points of $F_{rs}(X')$.
\end{remark}

\begin{proposition} \label{homeomorphism} 
	Let $\beta: X'\to X$ be a transversal morphism of generic rank $r$ and  let $s=\max\mult(X)$. Then: 
\begin{enumerate} 
\item $\beta(F_{sr}(X'))\subset F_s(X)$;
\item $F_{sr}(X')$  is homeomorphic to $\beta(F_{sr}(X'))\subset  F_s(X)$. 
\end{enumerate} 
\end{proposition}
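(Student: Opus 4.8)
The plan is to reduce to the affine situation of \ref{condiciones}, where $X=\Spec(B)$ and $X'=\Spec(B')$ with $B\subset B'$ a finite extension ($B$ a domain, $B'$ reduced and equidimensional), and to argue entirely with the multiplicity formula of Theorem \ref{MultForm} together with the analysis of condition (*) carried out in \ref{condicion_estrella}. Both assertions are local on $X$ and $X'$, so this reduction is harmless; recall also that transversality means $\max\mult(X')=sr$, so every point of $F_{sr}(X')$ attains the maximum multiplicity of $X'$.

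For part (1), I would take a point $\xi'\in F_{sr}(X')$ corresponding to a prime $P\subset B'$, and set $\p=P\cap B$, the prime corresponding to $\xi=\beta(\xi')$. By construction $e_{B'_P}(PB'_P)=sr$, and since $sr$ is the maximum multiplicity attained on $X'$, the observation recorded at the end of \ref{condicion_estrella} guarantees that condition (*) holds at $P$. Feeding $e_{B'_P}(PB'_P)=sr$ into the defining equality of (*), namely $e_{B'_P}(PB'_P)=e_{B_\p}(\p B_\p)\,[L:K]=e_{B_\p}(\p B_\p)\cdot r$, yields $e_{B_\p}(\p B_\p)=s$. As $s=\max\mult(X)$, this forces $\xi\in F_s(X)$, which is precisely the inclusion $\beta(F_{sr}(X'))\subset F_s(X)$.

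For part (2), the strategy is to exhibit $\beta|_{F_{sr}(X')}$ as a continuous closed bijection onto its image and then invoke the elementary fact that such a map is a homeomorphism. Continuity is clear, and since $\beta$ is finite it is a closed map; as $F_{sr}(X')$ is closed in $X'$, the restriction $F_{sr}(X')\to \beta(F_{sr}(X'))$ is again closed. For injectivity I would use the decomposition of condition (*) in \ref{condicion_estrella}: at any point of $F_{sr}(X')$ condition (*) holds, hence so does clause (i), which says that $P$ is the \emph{unique} prime of $B'$ lying over $\p$. Thus the fibre of $\beta$ over $\beta(\xi')$ meets $F_{sr}(X')$ in the single point $\xi'$, so $\beta$ is injective there. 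A continuous closed bijection has a continuous inverse (closed sets push forward to closed sets), hence is a homeomorphism.

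The computational content is light; the point requiring care is the verification that condition (*) — and in particular its clause (i) — is genuinely available at every point of $F_{sr}(X')$, which is exactly where the transversality hypothesis $\max\mult(X')=sr$ enters. I therefore expect the main obstacle to be bookkeeping rather than conceptual: one must check that the local form of the multiplicity formula applies (so that the dimension condition $\dim B'_{M_i}=\dim B$ of Theorem \ref{MultForm} holds, which is guaranteed here by equidimensionality), and that the passage between the global homeomorphism statement and the affine, prime-by-prime analysis is carried out coherently.
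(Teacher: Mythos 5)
Your proof is correct and follows essentially the same route as the paper: reduce to the affine case, use the chain of inequalities from Zariski's formula to see that condition (*) holds at every point of $F_{sr}(X')$, deduce $e_{B_\p}(\p B_\p)=s$ for part (1), and obtain the homeomorphism in part (2) from injectivity (clause (i) of (*)) together with the fact that a finite morphism is closed. The only difference is that you spell out the ``injective $+$ closed $\Rightarrow$ homeomorphism onto the image'' step, which the paper compresses into the single word ``proper''.
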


\begin{proof}
	 It suffices to argue locally, so we may assume that $X=\text{Spec} (B)$ and  $X'=\text{Spec} (B')$.   Now  it follows from the discussion in \ref{condicion_estrella} that  the maximum multiplicity at points of $\text{Spec} (B')$ is $rs$, and that 
  condition (*) necessarily holds for all primes  in $ F_{sr}(B')$. Furthermore,   if $Q\in F_{sr}(B')$ 
and $\q=Q\cap B$ then  necessarily $e_{B_{\q}}(\q B _{\q})=s$, so
$\beta(F_{sr}(X'))\subset  F_s(X)$. Therefore $\beta$ induces an injective morphism, say 
$\beta: F_{sr}(X') \to F_s(X)$, which is proper and hence it induces an homeomorphism $\beta: F_{sr}(X') \to \beta(F_{sr}(X))(\subset F_s(X))$.
\end{proof}

\begin{corollary} \label{condicion_estrella_B}  Suppose that the extension 
$B\subset B'$ is transversal of generic rank $r$.  Let $s$ be the maximum multiplicity at points of $\Spec(B)$, let  $Q\subset F_{sr}(B')$ and let $\q:=B\cap Q$. 
Then: 
\begin{enumerate}
\item[(1)] The extension $B/\q\to B'/Q$ is finite;
\item[(2)] The field of fractions of $B/\q$, say $K(\q)$, equals the field of fractions of $B'/Q$, say $K(Q)$; 
\item[(3)] If  $P\subset B'$ is a prime ideal containing $Q$,   $\p:=P\cap B$,  $\overline{P}=P/Q$,  and 
$\overline{\p}=\p/\q$, then 
$$\text{e}_{B/\q}(\overline{\p})=\text{e}_{B/\q}(\overline{\p})[K(Q): K(\q)]=\text{e}_{B'/Q}(\overline{P}).$$
\end{enumerate}
In other words, the finite extension $B/\q\to B'/Q$ is  birational and bijective, and the multiplicity at corresponding points is the same. 
In particular,  $B'/Q$ is regular if and only if $B/\q$ is regular, and in that case necessarily  $B/\q=B'/Q$. 
\end{corollary}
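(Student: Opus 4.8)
The plan is to derive all three assertions from a single observation: condition (*) holds not merely at $Q$ but at \emph{every} prime of $B'$ containing $Q$. Statement (1) needs nothing: as $B\subset B'$ is finite, $B'$ is a finitely generated $B$-module, and reducing modulo $Q$ exhibits $B'/Q$ as a finitely generated module over $B/\q$. For (2) I would invoke condition (ii) of \ref{condicion_estrella}, which holds at $Q$ because $Q\in F_{sr}(B')$: it gives $B_\q/\q B_\q = B'_Q/QB'_Q$. The key is to identify these residue fields with the fraction fields of the quotient domains: since $B/\q$ is a domain, $B_\q/\q B_\q=K(\q)$, and likewise $B'_Q/QB'_Q=K(Q)$. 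Hence $K(\q)=K(Q)$ and $[K(Q):K(\q)]=1$, which is (2); the first equality in (3) is then immediate.

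The step that unlocks (3) is the remark that $\overline{\{Q\}}=V(Q)\subset F_{sr}(B')$. Indeed, by upper semicontinuity of the multiplicity the set $F_{sr}(B')=\{\,\mult\geq sr\,\}$ is closed, and since $sr$ is the maximum multiplicity it contains $Q$, hence the closure of $\{Q\}$. Therefore every prime $P\supseteq Q$ lies in $F_{sr}(B')$, and condition (*) holds at $P$ as well. I would then read off the conditions of \ref{condicion_estrella} at $P$ (with $\p=P\cap B$): condition (i) says $P$ is the only prime of $B'$ over $\p$, so a fortiori $\overline P$ is the only prime of $B'/Q$ over $\overline\p$; condition (ii) says the residue fields at $\p$ and $P$ agree, that is $[\kappa(\overline P):\kappa(\overline\p)]=1$, since residue fields are unchanged upon passing to $B/\q$ and $B'/Q$. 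Applying Zariski's formula (Theorem \ref{MultForm}) to the finite birational extension $B/\q\subset B'/Q$ localized at $\overline\p$ (the dimension hypothesis holds because this is a finite extension of domains), the right-hand sum collapses to the single term at $\overline P$ with residue degree $1$, giving
\[
	e_{B/\q}(\overline\p)=e_{(B'/Q)_{\overline P}}\bigl(\overline\p\,(B'/Q)_{\overline P}\bigr).
\]

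To finish (3) it remains to replace $\overline\p$ by $\overline P$ on the right. For this I use condition (iii) at $P$, which by the Remark following \ref{condicion_estrella} (Theorem \ref{Rees}) says that $\p B'_P$ is a reduction of $PB'_P$, i.e.\ $PB'_P$ is integral over $\p B'_P$. Integral dependence is preserved by the quotient map $B'_P\to B'_P/QB'_P=(B'/Q)_{\overline P}$, so $\overline P(B'/Q)_{\overline P}$ is integral over $\overline\p(B'/Q)_{\overline P}$; by Theorem \ref{Rees} these two ideals have the same multiplicity, and the displayed right-hand side equals $e_{B'/Q}(\overline P)$, proving (3). The final ``in particular'' then follows formally: condition (i) at all $P\supseteq Q$ makes $\beta$ restrict to a continuous bijection $\Spec(B'/Q)\to\Spec(B/\q)$, which is a homeomorphism because it is finite, hence closed; by (3) corresponding points carry equal multiplicity. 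Since multiplicity one characterizes regularity for these reduced equidimensional finite-type rings, $B'/Q$ is regular exactly when $B/\q$ is, and in that case $B/\q$ is normal while $B'/Q$ is finite and birational over it, forcing $B/\q=B'/Q$.

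The part I expect to need the most care is the propagation of condition (*): establishing $\overline{\{Q\}}\subset F_{sr}(B')$ and then transporting the three conditions of \ref{condicion_estrella} through the quotient. In particular one must check that the reduction property (iii) survives the passage to $B'_P/QB'_P$ (which is where preservation of integral dependence under ring maps enters) and that the dimension hypothesis in Zariski's formula is met by the birational extension $B/\q\subset B'/Q$; everything else is bookkeeping with residue fields and fraction fields.
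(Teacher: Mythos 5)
Your proposal is correct and follows essentially the same route as the paper's proof: the paper's argument is precisely that $\Spec(B'/Q)\subset F_{sr}(B')$, so that conditions (i)--(iii) of \ref{condicion_estrella} hold at every $P\supseteq Q$ and are inherited by $\overline P\subset B'/Q$. You have simply made explicit the bookkeeping that the paper compresses into the word ``inherited'' (Zariski's formula for the quotient extension, and the passage of the reduction property through $B'_P\to (B'/Q)_{\overline P}$ via the easy direction of Rees' theorem), all of which is sound.
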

 
\begin{proof}
The statement in (1) is clear and (2) follows from condition (ii) of \ref{condicion_estrella}. As for (3) note here that $\Spec(B'/Q) \subset F_{sr}(B')$, and the conditions (i), (ii) and (iii) of \ref{condicion_estrella}, which hold for all $P$ containing $Q$, are inherited by $\overline{P}=P/Q (\subset B'/Q)$.
\end{proof}

\begin{parrafo} {\bf Summarizing.} \label{resumen} With the same notation and conventions as    \ref{condiciones}, if $\beta: X'\to X$ is transversal, then from Corollaries \ref{homeomorphism} and \ref{condicion_estrella_B}  it follows that: 
\begin{enumerate}  
	\item $\beta (F_{sr}(X')) \subset F_{s}(X)$;
	\item $F_{sr}(X')$ is homeomorphic to $\beta (F_{sr}(X'))$; 
	\item If  $Y \subset F_{sr}(X')$ is an irreducible regular subscheme, then $\beta(Y)\subset F_{s}(X)$ is an irreducible regular subscheme;
	\item If $Z \subset F_{s}(X)$ is an irreducible closed regular subscheme, and if $\beta^{-1}(Z) \subset F_{sr}(X')$, then $\beta^{-1}(Z)_{\text{red}}$ is regular. 
\end{enumerate}
\color{black}
\end{parrafo}


\section{Presentations of transversal morphisms}
\label{sec:mult-and-int-closure}

The notion of multiplicity is closely related to that of integral closure of ideals. Given ideals $I\subset J$ in a Noetherian ring $B$, there are several (equivalent) formulations for $J$ to be integral over $I$, or say for $I$ to be a reduction of $J$.
Northcott and Rees introduced the notion of reduction. Given ideals $I\subset J$ as above, $I$ is a reduction of $J$ if $IJ^n=J^{n+1}$ for some $n \geq 0$ or, equivalently, if the inclusion $B[IW]\subset B[JW]$ is a finite extension of subrings in $B[W]$. In this case, if $X \to \Spec(B)$ denotes the blow up at $I$ and $X' \to \Spec(B)$ is the blow up at $J$, there is a factorization $X' \to X$ induced by this finite extension, which is also a finite morphism.

The notion of reduction of an ideal $J$ in $B$ will appear naturally when studying the fibers of the blow up, which is a notion that relates to that of the integral closure of $J$.

\begin{parrafo}\label{finito}
A first connection of integral closure with the notion of multiplicity is given by the following Theorem of Rees. A    Noetherian  local ring $(A,M)$   is said to be formally equidimensional (quasi-unmixed in Nagata's terminology) if  
$\dim(\hat{A}/{\hat{\mathfrak p}})=\dim(\hat{A})$ at each minimal prime ideal $\hat{\mathfrak p}$  in the completion $\hat{A}$.
\end{parrafo}

\begin{theorem}[Rees Theorem, \cite{Rees}] \label{Rees}
Let $(A,M)$ be a formally equidimensional local ring, and let $I\subset J$ be two $M$-primary ideals.
Then $I$ is a reduction of $J$ if and only if $e_A(I)=e_A(J)$.
\end{theorem}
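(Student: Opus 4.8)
The plan is to prove the two implications separately; the forward direction is elementary and requires no hypothesis on $A$, while the converse carries all the weight and is where formal equidimensionality is used.

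For the direction ``reduction $\Rightarrow$ equal multiplicity'', I would argue by comparing Hilbert--Samuel polynomials. If $I$ is a reduction of $J$, say $IJ^n = J^{n+1}$, then by induction $I^kJ^n = J^{n+k}$ for all $k\geq 0$. From $I\subseteq J$ we get $I^k\subseteq J^k$, hence $\lambda(A/I^k)\geq \lambda(A/J^k)$, whereas $J^{n+k}=I^kJ^n\subseteq I^k$ gives $\lambda(A/J^{n+k})\geq \lambda(A/I^k)$. Comparing the leading coefficients of these polynomials in $k$, all of common degree $d=\dim A$, yields $e_A(J)\geq e_A(I)\geq e_A(J)$, so $e_A(I)=e_A(J)$. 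Note that this uses neither the reduction number nor the equidimensionality assumption.

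For the converse I would first recast the goal. Since $I\subseteq J$ gives $\overline{I}\subseteq\overline{J}$ (where $\overline{\phantom{I}}$ denotes integral closure of ideals), and since $I$ is a reduction of $J$ precisely when $\overline{I}=\overline{J}$, it suffices to prove $J\subseteq\overline{I}$; as $\overline{I}$ is an ideal and $J$ is finitely generated, this reduces to showing $x\in\overline{I}$ for each generator $x$ of $J$. I would then perform two standard reductions. First, one may assume $A/M$ infinite by replacing $A$ with $A(t)=A[t]_{MA[t]}$, which preserves $\dim$, multiplicities of extended ideals, the formally equidimensional property, and both the reduction and integral-closure relations. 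Second, one may pass to $\hat A$: multiplicities and the reduction property are insensitive to the faithfully flat map $A\to\hat A$, and ``formally equidimensional'' is by definition the statement that every minimal prime $\hat{\mathfrak p}$ of $\hat A$ satisfies $\dim(\hat A/\hat{\mathfrak p})=\dim\hat A=d$.

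This is exactly where the hypothesis becomes decisive. Writing $\hat{\mathfrak p}_1,\dots,\hat{\mathfrak p}_m$ for the minimal primes of $\hat A$, all of dimension $d$, the associativity formula for multiplicity reads
$$e_{\hat A}(I\hat A)=\sum_{i=1}^m e_{\hat A/\hat{\mathfrak p}_i}\bigl(I(\hat A/\hat{\mathfrak p}_i)\bigr)\,\lambda_{\hat A_{\hat{\mathfrak p}_i}}(\hat A_{\hat{\mathfrak p}_i}),$$
and likewise for $J$. Because $I\subseteq J$ forces $e_{\hat A/\hat{\mathfrak p}_i}(I(\hat A/\hat{\mathfrak p}_i))\geq e_{\hat A/\hat{\mathfrak p}_i}(J(\hat A/\hat{\mathfrak p}_i))$ term by term, equality of the two total multiplicities forces equality on every component. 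Since nilpotents automatically lie in the integral closure of any $M$-primary ideal, and since integral closure of an ideal in a reduced ring is detected on the quotients by the minimal primes (via the valuative criterion, every valuation centered at $M$ factoring through some $\hat A/\hat{\mathfrak p}_i$), the whole problem reduces to the case where $\hat A/\hat{\mathfrak p}_i$ is a complete local \emph{domain}. Without formal equidimensionality a lower-dimensional minimal prime would be invisible to the multiplicity and the theorem is genuinely false, so this reduction is the structural heart of the argument.

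It then remains to settle the complete local domain case: $I\subseteq J$ are $M$-primary with $e(I)=e(J)$, and I must show $J\subseteq\overline{I}$. Here I would invoke the finitely many Rees valuations $v_1,\dots,v_s$ of $I$ (the divisorial valuations arising from the normalized blow-up of $I$, which exist because a complete local domain is analytically unramified), for which $x\in\overline{I}$ holds if and only if $v_j(x)\geq v_j(I)$ for all $j$. The task is to deduce these valuative inequalities from the numerical equality $e(I)=e(J)$; this is carried out through Rees's degree/multiplicity formula comparing $e(I)$ with $e((I,x))$ along the $v_j$, using that in an excellent complete local domain these valuations have residue fields of the expected transcendence degree. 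I expect precisely this last step --- extracting the valuation-theoretic inequalities from the equality of multiplicities --- to be the genuine obstacle; the preceding reductions, though technical, are formal.
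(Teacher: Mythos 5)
The paper offers no proof of this statement: it is quoted directly from Rees's 1961 paper and used as a black box, so there is no argument of the authors' to compare yours against. What follows is therefore an assessment of your proposal on its own terms.

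Your forward implication is complete and correct, and your chain of reductions for the converse --- to infinite residue field via $A(t)$, to the completion, and, via the associativity formula together with formal equidimensionality, to the case of a complete local domain --- is the standard one and is sound. In particular you correctly identify that equidimensionality is exactly what guarantees that every minimal prime of $\hat{A}$ is ``seen'' by the multiplicity, so that $e(I)=e(J)$ can be compared term by term. The genuine gap is that you stop precisely where the theorem begins. In the complete local domain case you must show that $e_A(I)=e_A(J)$ forces $v(x)\geq v(I)$ for every $x\in J$ and every Rees valuation $v$ of $I$, and you neither state nor prove the formula that accomplishes this; you only name it (``Rees's degree/multiplicity formula'') and concede that extracting the valuative inequalities from the numerical equality is ``the genuine obstacle.'' That step is not a routine verification: it requires either Rees's theory of degree functions (establishing an identity of the shape $d_I(x)=\sum_j d(I,v_j)\,v_j(x)$ with strictly positive integer coefficients $d(I,v_j)$, and relating $d_I$ to a drop in multiplicity), or an induction on dimension via superficial elements, or the modern argument through the normalized Rees algebra and the multiplicities of the components of its exceptional divisor. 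Without one of these, the converse --- which is the entire content of Rees's theorem --- remains unproved, so what you have is an accurate reduction of the problem rather than a proof.
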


\begin{parrafo} \label{110}{\em A generalization of Rees Theorem.} Theorem \ref{Rees} can be generalized to the case of (non-necessarily) $M$-primary ideals, see Theorem \ref{TB} below. Before stating the theorem, we will review some of the terms that appear among the hypotheses.  
Let $I$ denote an ideal in a local ring $(A,M)$. Let 
$f: Z \to \Spec(A)$
be the blow up at $I$, and let
$f_0: Z_0\to \Spec(A/I)$
be the proper morphism induced by restriction.
Northcott and Rees defined the {\em analytic spread} as: 
$$l(I) = \dim(A/M\otimes_A\Gr_I(A))=\delta +1,$$ where $\delta$ is the dimension of the fiber of $f$ over the closed point of $\Spec(A)$, or equivalently, the dimension of the fiber of $f_0$ over the closed point.
Note that $l(I) = \dim(A)$ when $I$ is $M$-primary.

The height of $I$, say $ht(I)$, is $\min(\dim A_\q)$ as $\q$ runs through all primes containing $I$, and we claim that $l(I) \geq ht(I)$, with equality if and only if all fibers of $f_0$ have the same dimension (see \cite[\S2]{Lipman2}).
The inequality holds because the dimension of the fibers of $f_0: Z_0\to \Spec(A/I)$ is an upper semi-continuous function on primes of $A/I$ (this result is due to Chevalley, cf. \cite[Theorem~13.1.3]{EGAIV}). In addition, if $\q$ is minimal containing $I$, the dimension of the fiber over $\q$ is dim $A_{\q}$.

Finally, let $I\subset J$ is a reduction in a Noetherian ring $B$, and  let $X \to \Spec(B)$ and $X' \to \Spec(B)$ denote the blow ups at $I$ and $J$  (respectively). Since there is a factorization $X' \to X$ which is finite, $l(IB_P)=l(JB_P)$ at any prime $P$ in $B$.
\end{parrafo}

\begin{theorem}[{B\"{o}ger, cf. \cite[Theorems 2 and 3]{Lipman2}}] \label{TB}
Let $(A,M)$ be a formally equidimensional local ring. Fix an ideal $I \subset A$ so that $ht(I) = l(I)$. Consider another ideal $J \subset A$ so that $I \subset J \subset \sqrt{I}$. Then $I$ is a reduction of $J$ if and only if $e_{A_{\q}} (IA_{\q}) = e_{A_{\q}} (JA_{\q})$
for each minimal prime $\q$ of $I$.
\end{theorem}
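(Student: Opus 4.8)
The plan is to deduce this generalization of Rees' theorem from Rees' theorem itself (Theorem \ref{Rees}) applied at each minimal prime of $I$, combined with a local--global principle for integral closure that is made available precisely by the hypothesis $ht(I)=l(I)$. Throughout I would use that $I\subset J\subset\sqrt I$ forces $\sqrt I=\sqrt J$, so that $I$ and $J$ share the same minimal primes, and that $I$ is a reduction of $J$ if and only if $\overline I=\overline J$, where $\overline{(\cdot)}$ denotes the integral closure of an ideal. Since $I\subset J$ always gives $\overline I\subset\overline J$, the entire statement collapses to proving that the multiplicity condition is equivalent to the single containment $J\subset\overline I$.

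\textbf{Forward direction.} Assuming $I$ is a reduction of $J$, say $IJ^n=J^{n+1}$, localizing at a minimal prime $\q$ of $I$ gives $IA_\q\cdot(JA_\q)^n=(JA_\q)^{n+1}$, so $IA_\q$ is a reduction of $JA_\q$. As $\q$ is minimal over both ideals, $IA_\q$ and $JA_\q$ are $\q A_\q$-primary, and $A_\q$ is again formally equidimensional (a localization of a universally catenary equidimensional local ring is of the same type, by Ratliff's characterization). Rees' theorem then yields $e_{A_\q}(IA_\q)=e_{A_\q}(JA_\q)$ at every minimal prime $\q$ of $I$.

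\textbf{Reverse direction (the crux).} Assume the multiplicities agree at each minimal prime $\q$ of $I$. In the formally equidimensional local ring $A_\q$, where $IA_\q\subset JA_\q$ are $\q A_\q$-primary, Rees' theorem used in the opposite direction shows that $IA_\q$ is a reduction of $JA_\q$, i.e. $JA_\q\subset\overline{IA_\q}$. It then remains to promote these local containments to $J\subset\overline I$, and this is exactly where $ht(I)=l(I)$ enters. By the discussion in \ref{110}, $l(I)$ computes (up to one) the dimension of the fiber over the closed point of the blow-up of $I$, and the equality $l(I)=ht(I)$ is equivalent to all fibers of $f_0$ having the same dimension; since the fiber over a minimal prime $\q$ has dimension governed by $\dim A_\q=ht(\q)$, this constancy forces every minimal prime of $I$ to have height $ht(I)$ and, more importantly, forces every Rees valuation of $I$ to be centered at a minimal prime of $I$. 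Consequently $\overline I=\bigcap_\q\bigl(\overline{IA_\q}\cap A\bigr)$, the intersection running over the minimal primes $\q$ of $I$, so the local containments $JA_\q\subset\overline{IA_\q}$ already established give $J\subset\overline I$. Together with $I\subset J$ this gives $\overline I=\overline J$, i.e. $I$ is a reduction of $J$.

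\textbf{Main obstacle.} The primary case is routine, being just Rees' theorem localized; the genuine content is the local--global step in the reverse direction, namely that $ht(I)=l(I)$ lets one test integral closure of $I$ one minimal prime at a time. To make this precise I would first pass harmlessly to $A[X]_{MA[X]}$, so that the residue field becomes infinite and $I$ admits a minimal reduction generated by $l(I)=ht(I)$ elements, and then control the fibers of the normalized blow-up of $I$ using the analytic-spread-as-fiber-dimension interpretation together with Chevalley's upper semicontinuity, both recalled in \ref{110}. The formally equidimensional hypothesis is used twice: to invoke Rees' theorem inside each $A_\q$, and to ensure (via catenarity and equidimensionality) that the height count underlying the fiber-dimension argument is faithful, so that no Rees valuation can be centered at a prime strictly containing a minimal prime of $I$.
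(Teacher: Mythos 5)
The paper does not actually prove this statement: Theorem~\ref{TB} is quoted from Lipman's survey (Theorems 2 and 3 of \cite{Lipman2}) and used as a black box, so there is no in-paper argument to measure yours against. Taken on its own terms, your reduction is the standard route to B\"oger's theorem (it is essentially the argument in \cite{Lipman2} and in Huneke--Swanson, Ch.~11): replace ``$I$ is a reduction of $J$'' by ``$J\subseteq\overline{I}$'', observe that $I\subset J\subset\sqrt I$ makes the minimal primes coincide and the localized ideals primary, settle the forward direction by localizing (equality of multiplicities for a reduction of primary ideals is elementary and needs no hypothesis on $A$), and for the converse apply Rees' theorem (Theorem~\ref{Rees}) in each $A_{\q}$ --- using that formal equidimensionality passes to localizations, which is correct --- to obtain $JA_{\q}\subseteq\overline{IA_{\q}}$, and then globalize.

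The globalization is where the entire content of B\"oger's theorem lives, and it is the one place where your text asserts rather than proves. You need: if $A$ is formally equidimensional and $ht(I)=l(I)$, then every Rees valuation of $I$ is centered at a minimal prime of $I$, so that $\overline{I}=\bigcap_{\q}\bigl(\overline{IA_{\q}}\cap A\bigr)$. This is McAdam's theorem on asymptotic prime divisors, and it does not follow formally from Chevalley semicontinuity plus the fiber-dimension remarks of \ref{110}. The actual argument runs through the normalized blow-up: a Rees valuation centered at a prime $\p$ corresponds to a component of the exceptional divisor dominating $V(\p)$; formal equidimensionality (via Ratliff --- universal catenarity and the dimension formula) forces that component to have dimension $\dim A-1$, so its generic fiber over $\p$ has dimension at least $ht(\p)-1$, giving $l(IA_{\p})=ht(\p)$; combined with $l(IA_{\p})\le l(I)=ht(I)$ and the fact that under the hypothesis all minimal primes of $I$ have height $ht(I)$, this excludes non-minimal centers. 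Your sketch names the right ingredients but omits precisely this chain, as well as the technical point that the normalized blow-up (equivalently, the existence and finiteness of the set of Rees valuations) must be brought under control for a general formally equidimensional ring, e.g.\ by passing to $\hat A$ modulo nilpotents. Since this missing step is exactly the content of the cited Theorems 2 and 3 of \cite{Lipman2}, the fair verdict is that your proposal correctly reduces B\"oger's theorem to Rees' theorem plus the localization property of $\overline{I}$ at minimal primes, but does not itself establish that localization property.
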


Along these notes we will consider and study the blow ups along regular centers. The following theorem relates the notion of equimultiplicity with that of the fiber dimension at the blow up (analytic spread), studied by Dade, Hironaka, and Schickhoff.

\begin{theorem}[{Hironaka-Schickhoff, \cite[Corollary~3, p.~121]{Lipman2}}] \label{HS}
Let $(A,M)$ be a formally equidimensional local ring, and let $\p \subset A$ be a prime ideal so that $A / \p$ is regular. Then $ht(\p)=l(\p)$ in $A$ if and only if the local rings $A$ and $A_{\p}$ have the same multiplicity.
\end{theorem}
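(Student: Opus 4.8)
The plan is to deduce the statement from B\"oger's theorem (Theorem~\ref{TB}) together with the associativity formula for the multiplicity of a parameter ideal. First I would reduce to the case of an infinite residue field by replacing $A$ with $A(t)=A[t]_{MA[t]}$; this faithfully flat local extension changes neither $\dim$, nor $ht(\p)$, nor $l(\p)$, nor the regularity of $A/\p$, nor the multiplicities $e_A(M)$ and $e_{A_\p}(\p A_\p)$, keeps $A$ formally equidimensional, and guarantees that minimal reductions are generated by general elements. Writing $h=ht(\p)$ and $d=\dim(A/\p)$, formal equidimensionality (hence catenarity and equidimensionality) gives $\dim A=h+d$. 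Since $A/\p$ is regular, I would lift a regular system of parameters of $A/\p$ to elements $x_1,\dots,x_d\in M$; as their classes generate the maximal ideal of $A/\p$, one obtains the crucial identity $M=\p+\mathfrak{b}$ with $\mathfrak{b}=(x_1,\dots,x_d)$.

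Next I would choose $h$ general elements $a_1,\dots,a_h\in\p$ and set $I=(a_1,\dots,a_h)$. For a general choice one has $ht(I)=h$, whence $ht(I)=l(I)=h$ by the inequality $ht\leq l$ of \ref{110} together with $l(I)\leq h$; moreover every minimal prime of $I$ has codimension $h$ (so $\dim A/\mathfrak{P}=d$ for each of them), $IA_\p$ is a reduction of $\p A_\p$, and, by extending the classes of $x_1,\dots,x_d$ in the fiber cone $\Gr_M(A)$ by general classes coming from $\p$ (possible precisely because $M=\p+\mathfrak{b}$), the ideal $I+\mathfrak{b}$ is a \emph{minimal reduction of $M$}. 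By Rees' theorem (Theorem~\ref{Rees}) this gives $e_A(M)=e_A(I+\mathfrak{b})$, and since $I+\mathfrak{b}$ is a parameter ideal the associativity formula for its multiplicity yields
\[
	e_A(M)=e_A(I+\mathfrak{b})=\sum_{\mathfrak{P}} e_{A_{\mathfrak{P}}}(IA_{\mathfrak{P}})\,e_{A/\mathfrak{P}}\big(\mathfrak{b}(A/\mathfrak{P})\big),
\]
where $\mathfrak{P}$ runs over the minimal primes of $I$. The term attached to $\p$ equals $e_{A_\p}(IA_\p)\cdot e_{A/\p}(M/\p)=e_{A_\p}(\p A_\p)\cdot 1$, since $IA_\p$ is a reduction of $\p A_\p$ and $A/\p$ is regular with maximal ideal $M/\p$ generated by the classes of the $x_i$; every remaining term is strictly positive. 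Hence the master equivalence
\[
	e_A(M)=e_{A_\p}(\p A_\p)\quad\Longleftrightarrow\quad \p\text{ is the only minimal prime of }I\quad\Longleftrightarrow\quad \sqrt{I}=\p .
\]

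Finally I would close both implications. If $ht(\p)=l(\p)=h$, then $h$ general elements of $\p$ already generate a reduction $I$ of $\p$, so $\sqrt{I}=\p$ and the master equivalence gives $e_A(M)=e_{A_\p}(\p A_\p)$. Conversely, if $e_A(M)=e_{A_\p}(\p A_\p)$, then $\sqrt{I}=\p$, so $I\subseteq\p\subseteq\sqrt{I}$ with $ht(I)=l(I)$; since at the unique minimal prime $\p$ one has $e_{A_\p}(IA_\p)=e_{A_\p}(\p A_\p)$, Theorem~\ref{TB} shows that $I$ is a reduction of $\p$. As reductions preserve the analytic spread (see \ref{110}), $l(\p)=l(I)=h=ht(\p)$, which is the desired conclusion.

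The main obstacle is the block of genericity assertions in the second step, all in the formally equidimensional setting: simultaneously arranging $ht(I)=l(I)=h$ with \emph{all} minimal primes of $I$ of codimension $h$ --- which is exactly what makes the vanishing of the extra terms equivalent to $\sqrt{I}=\p$ --- and producing $I+\mathfrak{b}$ as a minimal reduction of $M$ through the fiber-cone extension. One must also justify that the associativity formula is available here (this is where catenarity and equidimensionality enter, ensuring $\dim A_{\mathfrak{P}}+\dim A/\mathfrak{P}=\dim A$ for the relevant $\mathfrak{P}$, and positivity of the off-$\p$ terms) and verify that the reduction to infinite residue field via $A(t)$ leaves $ht$, $l$, and the two multiplicities unchanged.
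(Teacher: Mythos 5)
The paper does not actually prove this statement: it is quoted from Lipman \cite[Corollary~3, p.~121]{Lipman2}, so your argument has to stand on its own. The forward implication as you set it up is fine and is the standard argument: once $l(\p)=ht(\p)=h$, a minimal reduction $I=(a_1,\dotsc,a_h)\subseteq\p$ satisfies $\sqrt{I}=\p$, hence $I+\mathfrak{b}$ genuinely is a system of parameters and a reduction of $M=\p+\mathfrak{b}$, and the associativity formula then yields $e_A(M)=e_{A_{\p}}(\p A_{\p})$.

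The converse implication, however, rests on the claim that for a \emph{general} choice of $a_1,\dotsc,a_h\in\p$ the ideal $I+\mathfrak{b}$ is a minimal reduction of $M$ --- or at the very least a system of parameters, which is already needed for $e_A(I+\mathfrak{b})$ and for the associativity formula to make sense. This is not a genericity issue that can be checked away: it can fail for \emph{every} choice of $h$ elements of $\p$. Take $A=k[[u,v,w]]/\langle uv\rangle$ (complete and equidimensional of dimension $2$, hence formally equidimensional) and $\p=(u,w)A$, so that $A/\p\cong k[[v]]$ is regular, $h=ht(\p)=1$, $d=1$, $\mathfrak{b}=(v)$. Then $A/\mathfrak{b}\cong k[[u,w]]$ has dimension $2$ with maximal ideal equal to the image of $\p$, so no single $a\in\p$ makes $(a,v)$ an $M$-primary ideal; equivalently, the initial form $V$ of $v$ in $\Gr_M(A)\cong k[U,V,W]/(UV)$ is not part of a homogeneous system of parameters, so it cannot be completed to one by a class coming from $\p$. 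The identity $M=\p+\mathfrak{b}$ only guarantees that the images of $\p$ generate the maximal ideal of $A/\mathfrak{b}$, not that $h$ of them cut it down to dimension zero; the latter forces $\dim A/\mathfrak{b}=h$, which is not available from the hypothesis $e_A(M)=e_{A_{\p}}(\p A_{\p})$ without essentially proving the theorem. (In the example both sides of the equivalence fail, consistently with the statement, but your ``master equivalence'' is never reached.) So the hard half, $e_A(M)=e_{A_{\p}}(\p A_{\p})\Rightarrow ht(\p)=l(\p)$, remains unproved; it requires a different argument (e.g.\ induction on $\dim A/\p$ via superficial elements, as in Lipman or Herrmann--Ikeda--Orbanz), and it is exactly the part for which the paper defers to \cite{Lipman2}.
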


\begin{lemma}[Presentation of transversal extensions]
\label{lm:primera_pres}
Let $B$ be a domain of finite type over a perfect field $k$ with maximum multiplicity $s$ \textup{(}$s\geq 1$\textup{)}, and let $B \subset B'$ be a transversal extension of generic rank $r$ \textup{(}see \ref{condiciones} and Definition~\ref{def:transversal}\textup{)}. Let $P \in F_{sr}(B')$ be a prime, and let $\p = B \cap P$. Then:
\begin{enumerate}
\item The extension of local rings $B_{\p} \to B'_P$ is finite. Moreover, there are elements $\theta_1, \dotsc, \theta_m \in B'_P$ integral over $\p B'_P$ such that $B'_P = B_{\p}[\theta_1, \dotsc, \theta_m]$;
\item If in addition $B'/P$ is a regular ring, then the $\theta_1, \dotsc, \theta_m$ can be chosen in $P$, and $P$ is the integral closure of $\p B'$ in $B'$.
\end{enumerate}%
\end{lemma}

\begin{proof}
(1) The extension $B_{\p}\to B'_P$ is finite, thus there are elements $\omega_1,\dotsc, \omega_m \in B'_P$ so that $B'_P = B_{\p}[\omega_1, \ldots, \omega_m]$. The quotient field of $B_{\p}$, say $K(\p)$, is equal to the quotient field of $B'_P$, say $K(P)$. Hence there are elements $\alpha_1, \ldots, \alpha_m\in B_{\q}$ such that $\overline{\alpha_i} = \overline{\omega_i} \in K(\p) = K(P)$. Setting $\theta_i:=\omega_i-\alpha_i$ one has that $\theta_i \in P$ and  $B'_P = B_{\q}[\theta_1, \dotsc, \theta_m]$. Since $PB'_P$ is integral over $\p B'_P$ each $\theta_i$ is integral over $\p B'_P$.  
 
(2) Set $B' = B[\theta_1, \dotsc, \theta_m]$. By condition~(3) of Corollary~\ref{condicion_estrella_B}, $B'/P=B/\p$. So clearly $\theta_1, \dots, \theta_m$ can be modified by taking $\theta_1-\alpha_1, \dots, \theta_m- \alpha_m$, $\alpha_i\in B$, and we may assume that they are in $P$. 

The discussion in \ref{condicion_estrella}, more precisely in condition (iii),  applies for the local rings $B_{\p}\subset B'_P$, and therefore $\p B'_P$ is a reduction of $PB'_P$ or say that $PB'_P$ is the integral closure of $\p B'_P$. Our claim now is that $P$ is the integral closure of $\p B'$ at the ring $B'$. It suffices to check that this claim holds locally at every maximal ideal $M\subset B'$ containing $P$. We will apply Theorem \ref{TB} at $B'_M$ to prove this latter claim:  
Let $\mathfrak{m}=M\cap B$, as $B'/P$ is regular and equal to $B/\p B$, it follows that $B_{\mathfrak{m}}/\p B_{\mathfrak{m}}$ is a regular local ring.
Under these conditions, as $B$ is a domain and the multiplicity of $B_{\mathfrak{m}}$ coincides with that of $B_\mathfrak{p}$, Theorem~\ref{HS} asserts that $l(\p)=ht(\p)$ in $B_{\mathfrak{m}}$.
 
Now, since the extension $B_{\mathfrak{m}}\to B'_M$ is finite, we have that $ht(\p B'_M) = ht(\p)$, and the blow up of $B'_M$ at $\p B'_M$ is finite over the blow up of $B_{\mathfrak{m}}$ at ${\p}B_{\mathfrak{m}}$. This implies that the fibers over the closed points have the same dimension. Thus $l(\p B'_M) = l(\p)$, and therefore $l(\p B'_M) = ht(\p B'_M)$ in $B'_M$. Recall that $e_{B'_P}(PB'_P)=e_{B'_P}(\p B'_P)$ by condition (iii) of \ref{condicion_estrella} and Theorem~\ref{Rees}. Now, since $P B'_M$ is the only minimal prime of $\p B'_M$, the claim follows from Theorem~\ref{TB} (taking $A = B'_M$, $I = \p B'_M$, and $J = P B'_M$).

\end{proof}


\section{Transversality and blow ups}\label{Transv_transformations}

Let $X$ be an equidimensional scheme of finite type over a perfect field $k$ with maximum multiplicity $s$. As we will see, there are some types of transformations that play an important role in the study of the multiplicity.  We will say that a morphism $X \leftarrow X_1$ is an \emph{$F_s$-local transformation} if it is of one of the following types:
\begin{enumerate}
\item[i)] The blow up of $X$ along a regular center $Y$ contained  in $F_s(X)$. This will be called an \emph{$F_s$-permissible blow up}, or simply a \emph{permissible blow up} when there is no confusion with $s$. In this case we will also say that $Y$ is an \textit{$F_s$-permissible center}.
\item[ii)] An open restriction, i.e., $X_1$ is an open subscheme of $X$. In order to avoid trivial transformations, we will always require $X_1 \cap F_s(X) \neq \emptyset$.
\item[iii)] The multiplication of $X$ by an affine line, say $X_1 = X \times \mathbb{A}^1_k$.
\end{enumerate}
Note that, in either case, $\max\mult(X) \geq \max\mult(X_1)$  (see \cite{D}). We will say that a sequence of transformations, say
\[ \xymatrix {
	X = X_0 &
	X_1 \ar[l]_-{\varphi_1} &
	\dots \ar[l]_-{\varphi_2} &
	X_n \ar[l]_-{\varphi_{n}}
}, \]%
is an \textit{$F_s$-local sequence on $X$} if $\varphi_i$ is an $F_s$-local transformation of $X_{i-1}$ for $i = 1, \dotsc, n$, and
\[
	s = \max\mult(X_0)
	= \dotsc = \max\mult(X_{n-1})
	\geq \max\mult(X_n) .
\]%
In the following lines we will study the behavior of transversality under local sequences. The main result is Theorem \ref{thm:Frs-sequence}.

\begin{remark}[multiplicity and \'etale morphisms]
\label{rmk:multiplicity-etale}
The behavior of the multiplicity is naturally compatible with \'etale topology. Namely, given an equidimensional scheme $X$ of finite type over $k$ and an \'etale morphism $\varepsilon : \widetilde{X} \to X$, one has that $\mult(\tilde{\xi}) = \mult\bigl(\varepsilon(\tilde{\xi})\bigr)$ for every $\tilde{\xi} \in \widetilde{X}$. Moreover, every $F_s$-local sequence over $X$, say
\[ \xymatrix {
 	X &
 		X_1 \ar[l] &
 		\dotsb \ar[l] &
 		X_l, \ar[l]
} \]%
induces by base change an $F_s$-local sequence over $\widetilde{X}$, say
\[ \xymatrix {
 	\widetilde{X} &
 		\widetilde{X}_1 \ar[l] &
 		\dotsb \ar[l] &
 		\widetilde{X}_l, \ar[l]
} \]%
and a commutative diagram,
\[ \xymatrix @R=15pt {
 	\widetilde{X} \ar[d]^{\varepsilon} &
 		\widetilde{X}_1 \ar[l] \ar[d]^{\varepsilon_1} &
 		\dotsb \ar[l] &
 		\widetilde{X}_l \ar[l] \ar[d]^{\varepsilon_l} \\
 	X &
 		X_1 \ar[l] &
 		\dotsb \ar[l] &
 		X_l, \ar[l]
} \]%
where each $\varepsilon_i$ is \'etale.
\end{remark}

Consider a finite morphism of schemes, say $\beta : X' \to X$, a closed center $Y' \subset X'$, and its image in $X$, say $Y = \beta(Y')$. In general, there is no natural map from the blow up of $X'$ at $Y'$ to the blow up of $X$ at $Y$. The next lemma provides a condition under which such map exists, and moreover it is finite.

\begin{lemma}  \label{lm:blow-up-finite}
	Let $\gamma : B \to B'$ be a homomorphism of Noetherian rings. Consider two ideals, say $I \subset B$, and $J \subset B'$, such that $\gamma(I)B'$ is a reduction of $J$. Let $X\to \Spec(B) $ be  the blow at $I$ and let $X' \to \Spec(B')$  be the blow up at $J$. Then there exists a unique morphism $\beta_1 : X'  \to X$ which makes the following diagram commutative:
	\begin{equation} \label{diag:blow-up-finite}
	\xymatrix@R=15pt{
		\Spec(B') \ar[d] &
	X'  \ar[l] \ar[d]^{\beta_1} \\
		\Spec(B) &
		X. \ar[l]
	} \end{equation}%
	In addition, if $B'$ is finite over $B$, then $\beta_1$ is also finite.
\end{lemma}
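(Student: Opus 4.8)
The plan is to obtain $\beta_1$ from the universal property of blowing up, and then to prove finiteness by a direct computation with the Rees algebras $B[IW]$ and $B'[JW]$ and their homogeneous localizations. Recall first that $X = \Proj(B[IW]) \to \Spec(B)$ is the universal $\Spec(B)$-scheme on which $I$ extends to an invertible sheaf of ideals: any morphism $W \to \Spec(B)$ along which $I\mathcal{O}_W$ becomes invertible factors uniquely through $X$. I would apply this to $W = X'$ equipped with the composite $X' \to \Spec(B') \xrightarrow{\Spec(\gamma)} \Spec(B)$, the point being to check that $I\mathcal{O}_{X'}$ is invertible. Since the $I$-action on $\mathcal{O}_{X'}$ factors through $\gamma$, one has $I\mathcal{O}_{X'} = (\gamma(I)B')\mathcal{O}_{X'}$. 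Set $\mathcal{L} := J\mathcal{O}_{X'}$, which is invertible because $X' = \Proj(B'[JW])$ is the blow up at $J$, and $\mathcal{M} := (\gamma(I)B')\mathcal{O}_{X'} \subseteq \mathcal{L}$. The reduction hypothesis $(\gamma(I)B')J^n = J^{n+1}$ extends to $\mathcal{M}\mathcal{L}^n = \mathcal{L}^{n+1} = \mathcal{L}\cdot\mathcal{L}^n$, and since $\mathcal{L}^n$ is invertible (hence locally generated by a non-zero-divisor) it can be cancelled to give $\mathcal{M} = \mathcal{L}$. Thus $I\mathcal{O}_{X'} = \mathcal{M}$ is invertible, and the universal property produces a unique $\beta_1 : X' \to X$ over $\Spec(B)$, which is precisely the required commutative square.

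For finiteness, assume now that $B'$ is module-finite over $B$. I would realize $\beta_1$ as the morphism of $\Proj$'s induced by the graded homomorphism $\psi : B[IW] \to B'[JW]$ whose degree-zero part is $\gamma$ and which maps $IW$ into $(\gamma(I)B')W \subseteq JW$; a short check identifies this with the $\beta_1$ above. The crux is that $B'[JW]$ is module-finite over $B[IW]$, which I would prove by factoring $\psi$ through $B'[\gamma(I)B'W]$. On one side, the multiplication map gives a graded surjection $B' \otimes_B B[IW] \twoheadrightarrow B'[\gamma(I)B'W]$, so the latter is generated over $B[IW]$ by a finite generating set of $B'$ over $B$ (placed in degree zero). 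On the other side, $\gamma(I)B'$ being a reduction of $J$ makes $B'[\gamma(I)B'W] \subseteq B'[JW]$ a finite extension of graded rings (Northcott--Rees, as recalled at the beginning of Section~\ref{sec:mult-and-int-closure}), and from $J^m = (\gamma(I)B')^{m-n}J^n$ for $m \geq n$ one sees that $\bigoplus_{\ell=0}^{n} J^\ell W^\ell$ furnishes finitely many homogeneous generators. Composing, $B'[JW]$ is module-finite over $B[IW]$ with finitely many homogeneous generators $t_1, \dots, t_N$, say of degrees $d_1, \dots, d_N$.

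Finally I would descend to affine charts. Covering $X$ by the standard opens $D_+(fW)$ with $f$ running over generators of $I$, the reduction hypothesis gives $\sqrt{(\gamma(I)B')W\cdot B'[JW]} \supseteq (B'[JW])_+$, so the preimages $\beta_1^{-1}(D_+(fW)) = D_+(\gamma(f)W)$ cover $X'$ (in passing this reconfirms that $\beta_1$ is everywhere defined). On such a pair of charts the induced map of coordinate rings is $(B[IW])_{(fW)} \to (B'[JW])_{(\gamma(f)W)}$; writing $v = \gamma(f)W = \psi(fW)$ and expanding a degree-zero element $a/v^m$ with $a = \sum_i \psi(s_i) t_i$ ($s_i \in B[IW]$ homogeneous) as $\sum_i \psi\bigl(s_i/(fW)^{m-d_i}\bigr)\cdot\bigl(t_i/v^{d_i}\bigr)$, one sees that the target is generated over the source by the finitely many $t_i/v^{d_i}$. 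Hence $\beta_1$ is finite on each chart and therefore finite.

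The step I expect to be the main obstacle is converting the reduction hypothesis into its two operative forms: the cancellation $\mathcal{M} = \mathcal{L}$, which is clean once one uses that invertible ideal sheaves are locally generated by non-zero-divisors, and the module-finiteness of $B'[JW]$ over $B[IW]$, where the care lies in arranging \emph{homogeneous} generators so that the chart computation closes up. The descent to charts is otherwise routine, its only subtlety being the verification---again a restatement of the reduction condition---that the induced $\Proj$ morphism is defined on all of $X'$.
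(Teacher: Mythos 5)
Your proposal is correct and follows essentially the same route as the paper's proof: the morphism $\beta_1$ is obtained from the universal property of the blow up at $I$ after using the reduction identity $\gamma(I)J^n=J^{n+1}$ and cancelling the invertible sheaf $J^n\mathcal{O}_{X'}$ to see that $I\mathcal{O}_{X'}=J\mathcal{O}_{X'}$ is invertible, and finiteness is deduced from the module-finiteness of $B'[JW]$ over $B[IW]$ together with the covering of $X'$ by the charts $D_+(\gamma(f_i)W)$. Your write-up merely makes explicit two points the paper leaves implicit (the factorization through $B'[\gamma(I)B'W]$ and the degree-zero chart computation with homogeneous generators), which is a sound elaboration rather than a different argument.
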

\begin{proof}
	Since $J$ is integral over $\gamma(I)B'$, we have $\gamma(I) J^n = J^{n+1}$ for some $n \geq 0$ (see \cite[p.~156]{NR}, and \cite[Lemma~1.1, p.~792]{Lipman1}). Therefore  $\gamma(I) J^n \mathcal{O}_{X'} = J^{n+1} \mathcal{O}_{X'}$.  Observe that, by definition, $J \mathcal{O}_{X'}$ is an invertible sheaf of ideals over ${X'}$. Hence $J^n \mathcal{O}_{X'}$ is also invertible, and, in this way, $\gamma(I) J^n \mathcal{O}_{X'} = J^{n+1} \mathcal{O}_{X'}$ implies $\gamma(I) \mathcal{O}_{X'} = J \mathcal{O}_{X'}$. In particular, this means that $\gamma(I) \mathcal{O}_{X'}$ is invertible over ${X'}$.  Thus, by the universal property of the blow up at $I$,  $X\to \Spec(B)$, there exists a unique morphism of schemes, say $\beta_1 : {X'}\to X$, making \eqref{diag:blow-up-finite} commutative.
	
	For the second claim, fix a set of generators of $I$ over $B$, say $I = \langle f_1, \dotsc, f_r\rangle$. Recall that $X = \Proj(B[IW])$, and ${X'}=\Proj(B'[JW])$. From the existence of $\beta_1$, we deduce that ${X'}$ is covered by the affine charts associated to $\gamma(f_1), \dotsc, \gamma(f_r)$, i.e., those given by the rings $\left[ (B'[JW])_{\gamma(f_i) W}\right]_0$ (the homogeneous part of degree $0$ of $(B'[JW])_{\gamma(f_i) W}$). Since $B'$ is finite over $B$, and $\gamma(I)B'$ is a reduction of $J$, the graded algebra $B'[JW]$ is finite over $B[IW]$. Hence $\left[ (B'[JW])_{\gamma(f_i) W}\right]_0$ is finite over $\left[ (B[IW])_{f_i W}\right]_0$ for $i = 1, \dotsc, r$, and therefore $\beta_1$ is finite.
\end{proof}

\begin{remark} \label{rmk:cartas_blowup}
Let $B \subset B'$ be a finite extension of Noetherian domains, and consider two ideals $I \subset B$ and $J \subset B'$ so that $J$ is integral over $IB'$. Let ${X'}\to \Spec(B')$ be the blow up at $J$ and let $X\to \Spec(B)$ be the blow up at $I$.  Fix generators of $I$ and $J$, say $I = \langle x_1, \dotsc, x_r \rangle$, and $J = \langle x_1, \dotsc, x_r, \theta_1, \dotsc, \theta_s \rangle$, where $\theta_1, \dotsc, \theta_s$ are integral over $IB'$. Under these hypotheses, the Lemma says there is a natural finite map ${X'} \to X$. Note that $X$ can be covered by $r$ affine charts of the form $\Spec(B_1), \dotsc, \Spec(B_r)$, with $B_i = B \left [ \frac{x_1}{x_i}, \dotsc, \frac{x_r}{x_i} \right ]$.
Moreover, from the second part of the proof it follows that ${X'}$ can be covered by $r$ affine charts of the form $\Spec(B'_1), \dotsc, \Spec(B'_r)$, where $B'_i = B' \left [ \frac{x_1}{x_i}, \dotsc, \frac{x_r}{x_i}, \frac{\theta_1}{x_i}, \dotsc, \frac{\theta_s}{x_i} \right ]$, and the map ${X'} \to X$ is locally given by the extension $B_i \subset B'_i$, which is finite.
\end{remark}

\begin{theorem}  \label{thm:Frs-sequence}
Let $X$ be a variety with maximum multiplicity $s$ and let $\beta : X' \to X$ be a transversal morphism of generic rank $r$. Then:
\begin{enumerate}[i\textup{)}]

\item An $F_{rs}$-permissible center on $X'$, say $Y' \subset F_{rs}(X')$, induces an $F_s$-permissible center on $X$, say $Y = \beta(Y') \subset F_s(X)$, and a commutative diagram of blow ups   of $X$ at $Y$, say $X\leftarrow X_1$, and of $X'$ at $Y'$, say $X'\leftarrow X_1'$, as follows,
\[ \xymatrix@R=15pt{
	X'  \ar[d]^\beta &
	X'_1   \ar[l] \ar[d]^{\beta_1} \\
	X  &
	X_1, \ar[l]
} \]%
where $\beta_1$  is finite of generic rank $r$. In addition, if $F_{rs}(X'_1) \neq \emptyset$, then $F_s(X_1) \neq \emptyset$, and the morphism $\beta_1$ is transversal.
		
\item Any sequence of $F_{rs}$-permissible blow ups on $X'$, 
		$X' \leftarrow 
		X'_1  
		\dotsb    
		\leftarrow X'_{N-1}  \leftarrow 
		X'_N,$
induces a sequence of $F_s$-permissible blow ups on $X$, and a commutative diagram as follows,
\begin{equation*} 
\xymatrix@R=15pt{
	X' \ar[d]^{\beta} &
		X'_1 \ar[l] \ar[d]^{\beta_1} &
		\dotsb \ar[l] &
		X'_{N-1} \ar[l] \ar[d]^{\beta_{N-1}} &
		X'_N \ar[l] \ar[d]^{\beta_N} \\
	X &
		X_1 \ar[l] &
		\dotsb \ar[l] &
		X_{N-1} \ar[l] &
		X_N, \ar[l]
} \end{equation*}%
where each $\beta_i$ is finite of generic rank $r$. Moreover, if $F_{rs}(X'_N) \neq \emptyset$, then $F_s(X_N) \neq \emptyset$, and the morphism $\beta_N$ is transversal.
\end{enumerate}
\end{theorem}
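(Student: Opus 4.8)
The plan is to establish (i) by assembling the structural results proved above, and then to obtain (ii) by a routine induction on the length of the sequence. Throughout I argue locally, so I assume $X = \Spec(B)$ and $X' = \Spec(B')$ with $B \subset B'$ the corresponding transversal extension of generic rank $r$, and I write $s = \max\mult(X)$.

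For (i), I would first check that $Y = \beta(Y')$ is $F_s$-permissible. Since $Y' \subset F_{rs}(X')$ is irreducible and regular, this is immediate from Remark \ref{resumen}(3), which gives that $\beta(Y') \subset F_s(X)$ is irreducible and regular. The heart of the matter is the construction of the finite map $\beta_1$. Let $P \subset B'$ be the prime defining $Y'$ and $\p = B \cap P$ the prime defining $Y$, so that the two blow ups in question are the blow up of $\Spec(B)$ at $\p$ and of $\Spec(B')$ at $P$. Because $Y'$ is regular, $B'/P$ is a regular ring, and hence Lemma \ref{lm:primera_pres}(2) applies and shows that $P$ is the integral closure of $\p B'$ in $B'$; in particular $\p B'$ is a reduction of $P$. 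This is exactly the hypothesis required by Lemma \ref{lm:blow-up-finite} (with $I = \p$, $J = P$), which then produces a unique commutative square and a morphism $\beta_1 : X'_1 \to X_1$ that is finite since $B'$ is finite over $B$. As both horizontal maps are blow ups they are birational, so the function fields are preserved and $\beta_1$ has generic rank $[L:K] = r$.

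For the final (``Moreover'') assertion of (i), suppose $F_{rs}(X'_1) \neq \emptyset$, so $\max\mult(X'_1) \geq rs$. Since $X_1 \to X$ is an $F_s$-permissible blow up the multiplicity does not increase, giving $\max\mult(X_1) \leq s$, while the bound \eqref{eq222} applied to the finite dominant morphism $\beta_1$ of generic rank $r$ gives $\max\mult(X'_1) \leq r \cdot \max\mult(X_1)$. Chaining these,
\[ rs \leq \max\mult(X'_1) \leq r \cdot \max\mult(X_1) \leq r \cdot s = rs, \]
so every inequality is an equality. Thus $\max\mult(X_1) = s$ (in particular $F_s(X_1) \neq \emptyset$) and $\max\mult(X'_1) = rs = r \cdot \max\mult(X_1)$, which is precisely transversality of $\beta_1$.

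Part (ii) I would then obtain by induction on $N$, the case $N = 1$ being (i). Because the prescribed sequence on $X'$ is $F_{rs}$-permissible, each center $Y'_i \subset F_{rs}(X'_i)$ exists and is regular, which forces $F_{rs}(X'_i) \neq \emptyset$ for $0 \le i \le N-1$. Assuming $\beta_i$ transversal (valid for $i=0$ by hypothesis), this nonemptiness forces $\max\mult(X_i) = s$, so that $X_i$ is again a variety of maximum multiplicity $s$ and (i) is applicable; applying (i) to $\beta_i$ and $Y'_i$ yields the permissible center $Y_i = \beta_i(Y'_i)$, the blow up $X_i \leftarrow X_{i+1}$, and the finite morphism $\beta_{i+1}$ of generic rank $r$. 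For $i+1 \le N-1$ the nonemptiness of $F_{rs}(X'_{i+1})$ and the Moreover clause of (i) show $\beta_{i+1}$ is again transversal, closing the induction; a final application of (i) to $\beta_{N-1}$ produces $\beta_N$, finite of generic rank $r$ and transversal exactly when $F_{rs}(X'_N) \neq \emptyset$. I expect the single genuinely nontrivial step to be the reduction statement that $\p B'$ is a reduction of $P$: this is where regularity of the center is indispensable and where Lemma \ref{lm:primera_pres}(2) carries the weight, everything else being either a formal consequence (Lemma \ref{lm:blow-up-finite}) or the multiplicity sandwich above.
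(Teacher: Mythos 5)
Your proposal is correct and follows essentially the same route as the paper: reduce (ii) to (i) by induction, use \ref{resumen} for permissibility of $\beta(Y')$, invoke Lemma \ref{lm:primera_pres}(2) to see that $\p B'$ is a reduction of the center's ideal, apply Lemma \ref{lm:blow-up-finite} to get the finite map $\beta_1$, and conclude transversality from the multiplicity bounds. Your explicit inequality chain $rs \leq \max\mult(X'_1) \leq r\cdot\max\mult(X_1) \leq rs$ just spells out what the paper states more tersely.
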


\begin{proof}
Property ii) follows from i) by induction on $N$. Thus we just need to prove i).

Note that the center $Y = \beta(Y')$ is $F_s$-permissible by \ref{resumen}. The varieties $X$ and $X'$ can be locally covered by affine charts of the form $\Spec(B)$ and $\Spec(B')$, with the morphism $\beta$ being given by a finite extension $B \subset B'$. Let $P \subset B$ and $P' \subset B'$ denote the ideals of definition of $Y$ and $Y'$ respectively. By Lemma~\ref{lm:primera_pres}, $P'$ is integral over the extended ideal $PB'$. Then, under these hypotheses, Lemma~\ref{lm:blow-up-finite} says that there is natural finite morphism $\beta_1 : X'_1 \to X_1$ which makes the diagram in i) commutative. Since the blow up of an integral scheme along a proper center is birational, the generic rank of $\beta_1$ coincides with that of $\beta$. That is, $\beta_1$ is a finite morphism of generic rank $r$, which proves the first part of i).

For the second part of the claim, recall first that the multiplicity does not increase when blowing up along permissible centers, and hence $\max\mult(X_1) \leq s$, and $\max\mult(X'_1) \leq rs$. In addition, the map $\beta : X'_1 \to X_1$ has generic rank $r$. Thus $F_{rs}(X'_1) \neq \emptyset$ implies $F_s(X_1) \neq \emptyset$. In particular, if $F_{rs}(X'_1) \neq \emptyset$, then the morphism $\beta : X'_1 \to X_1$ is transversal.
\end{proof}

\begin{remark}  \label{rmk:Frs-local-seqs} Consider a transversal morphism $\beta : X' \to X$ as in the Theorem. As pointed out at the beginning of the section, we are interested in studying the behavior of $F_{rs}(X')$ and $F_s(X)$. Since the set $F_{rs}(X')$ is homeomorphic to its image in $F_s(X)$ via $\beta$, for any open subset $U' \subset X'$, one can find an open subset $U \subset X$ satisfying $U' \cap F_{rs}(X') = \beta^{-1}(U) \cap F_{rs}(X')$. In this setting we will say that $U' \subset X'$ is an $F_{rs}$-permissible restriction if there is an open subset $U \subset X$ so that $U' = \beta^{-1}(U)$. Thus, as a consequence of Theorem~\ref{thm:Frs-sequence}, one readily checks that every $F_{rs}$-local sequence on $X'$, say $X' \leftarrow X'_1 \leftarrow \dotsb \leftarrow X'_N$, induces an $F_s$-local sequence on $X$, and a commutative diagram as follows,
\begin{equation*} 
\xymatrix@R=15pt{
 	X' \ar[d]^{\beta} &
 		X'_1 \ar[l] \ar[d]^{\beta_1} &
 		\dotsb \ar[l] &
 		X'_{N-1} \ar[l] \ar[d]^{\beta_{N-1}} &
 		X'_N \ar[l] \ar[d]^{\beta_N} \\
 	X &
 		X_1 \ar[l] &
 		\dotsb \ar[l] &
 		X_{N-1} \ar[l] &
 		X_N, \ar[l]
} \end{equation*}%
where each $\beta_i$ is finite of generic rank $r$. In addition, if $F_{rs}(X'_N) \neq \emptyset$, then $\beta_N$ is transversal.
\end{remark}

\subsection*{Normalization and transversal morphisms}

Let $X$ be a singular variety over a field $k$ with maximum multiplicity $s \geq 2$. The normalization of $X$, say $\overline{X}$, is endowed with a natural finite morphism $\beta : \overline{X} \to X$ which is dominant and birational. Hence, by Zariski's formula (Theorem~\ref{MultForm}), $\max\mult(\overline{X}) \leq \max\mult(X)$.
In addition, if the equality holds, then $\beta : \overline{X} \to X$ is transversal, and $F_s(\overline{X})$ is mapped homeomorphically to $\beta(F_s(\overline{X})) \subset F_s(X)$ (see Corollary~\ref{homeomorphism} and \ref{resumen}).

Assume that the morphism $\beta : \overline{X} \to X$ is transversal, i.e., that $\max\mult(\overline{X}) = \max\mult(X)$. In this case, Theorem~\ref{thm:Frs-sequence} says that any sequence of blow ups along regular equimultiple centers  on $\overline{X}$ induces a sequence of blow ups on $X$. As a consequence of this result, one can also establish a relation between sequences of normalized blow ups on $\overline{X}$ and sequences of blow ups on $X$, as long as transversality is preserved. Recall that the normalized blow up of $\overline{X}$ along a closed center $\overline{Y} \subset \overline{X}$ is the normalization of the blow up of $\overline{X}$ along $\overline{Y}$.

\begin{corollary}
\newcommand{\X}{\overline{X}}
\newcommand{\Y}{\overline{Y}}
Let $X_0$ be a variety over a field $k$, and let $\X_0$ denote the normalization of $X_0$. Assume that $\max\mult(X_0) = \max\mult(\X_0)$. Let $\X_0 \leftarrow \X_1 \leftarrow \dotsb \leftarrow \X_{l-1} \leftarrow \X_l$
be a sequence of normalized blow ups along closed regular centers $\Y_i \subset \Max\mult(\X_{i-1})$, such that
\[
	\max\mult(\X_0)
	= \dotsb
	= \max\mult(\X_{l-1})
	\geq \max\mult(\X_l) .
\]%
Then there is a natural sequence of blow ups on $X_0$, say $X_0 \leftarrow X_1 \leftarrow \dotsb \leftarrow X_{l-1} \leftarrow X_l$,
along closed regular centers $Y_i \subset \Max\mult(X_i)$, so that
\[
	\max\mult(X_0)
	= \dotsb
	= \max\mult(X_{l-1})
	\geq \max\mult(X_l),
\]%
and $\X_i$ is the normalization of $X_i$ for $i = 1, \dotsc, l$.
\end{corollary}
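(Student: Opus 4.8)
The plan is to argue by induction on the length $l$, reducing everything to a single normalized blow up together with a check that transversality of the normalization morphism is propagated. Write $s = \max\mult(X_0) = \max\mult(\overline{X}_0)$. Since the normalization $\beta_0 : \overline{X}_0 \to X_0$ is finite, dominant and birational, the hypothesis $\max\mult(\overline{X}_0) = \max\mult(X_0)$ says exactly that $\beta_0$ is transversal of generic rank $r = 1$. Assume inductively that $\beta_{i-1} : \overline{X}_{i-1} \to X_{i-1}$ is transversal of generic rank $1$, that $\overline{X}_{i-1}$ is the normalization of $X_{i-1}$, and that $\max\mult(X_{i-1}) = s$.

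First I would produce the data downstairs in a single step. Let $W_i$ denote the ordinary blow up of $\overline{X}_{i-1}$ along the regular center $\overline{Y}_i \subset F_s(\overline{X}_{i-1})$, so that $\overline{X}_i$, the normalized blow up, is by definition the normalization of $W_i$. Applying Theorem~\ref{thm:Frs-sequence}(i) to the transversal morphism $\beta_{i-1}$ and the center $\overline{Y}_i$ produces an $F_s$-permissible (in particular closed regular) center $Y_i := \beta_{i-1}(\overline{Y}_i) \subset F_s(X_{i-1})$, the blow up $X_i$ of $X_{i-1}$ along $Y_i$, and a finite morphism $W_i \to X_i$ of generic rank $1$, i.e. birational, fitting into a commutative square over $\beta_{i-1}$. (For a center with several components one applies \ref{resumen} to each irreducible piece.)

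The key point is then the transitivity of normalization through the finite birational morphism $W_i \to X_i$. Since $X_i$ is integral (it is a blow up of the variety $X_{i-1}$ along a proper center) and $W_i \to X_i$ is finite and birational, the structure sheaves and the integral closure of $\mathcal{O}_{X_i}$ in $K(X_i) = K(W_i)$ form a chain $\mathcal{O}_{X_i} \subseteq \mathcal{O}_{W_i} \subseteq \overline{\mathcal{O}_{X_i}}$ of integral extensions inside one and the same function field; taking integral closures shows that the normalization of $W_i$ coincides with the normalization of $X_i$. Hence $\overline{X}_i$ is the normalization of $X_i$, which is the structural conclusion sought, and it forces $\beta_i : \overline{X}_i \to X_i$ to be finite, dominant and birational.

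It remains to track the multiplicity and thereby close the induction. As $X_i \to X_{i-1}$ is an $F_s$-permissible blow up one has $\max\mult(X_i) \leq \max\mult(X_{i-1}) = s$, while the bound \eqref{eq222} applied to the finite birational $\beta_i$ gives $\max\mult(\overline{X}_i) \leq \max\mult(X_i)$. For $i \leq l-1$ the hypothesis $\max\mult(\overline{X}_i) = s$ then forces $\max\mult(X_i) = s = \max\mult(\overline{X}_i)$, so $\beta_i$ is again transversal of generic rank $1$ and all inductive hypotheses are restored; for $i = l$ the resulting inequality $\max\mult(X_{l-1}) = s \geq \max\mult(X_l)$ is precisely the remaining assertion. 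The main obstacle to watch is exactly this propagation of transversality: the equalities of multiplicities along the tower are not automatic for a general finite birational map, and here they hold only because the given data pins down $\max\mult(\overline{X}_i) = s$ for $i \leq l-1$. The transitivity-of-normalization step, though conceptually simple, should be phrased scheme-locally, since $W_i$ need not be normal even when $\overline{X}_{i-1}$ is.
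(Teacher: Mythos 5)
Your proposal is correct and follows essentially the same route as the paper: reduce to a single step by induction, use the transversality of the (birational, rank‑one) normalization morphism together with Theorem~\ref{thm:Frs-sequence} to get a finite birational map from the non-normalized blow up of $\overline{X}_{i-1}$ to $X_i$, identify normalizations by transitivity, and recover transversality of $\beta_i$ from the multiplicity bounds. You merely spell out the transitivity-of-normalization and multiplicity-propagation steps that the paper leaves implicit.
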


\begin{proof}
	\newcommand{\X}{\overline{X}}
	\newcommand{\Y}{\overline{Y}}
	It suffices to prove the case $l=1$ (the general case follows by induction on $l$). Since $\max\mult(X_0) = \max\mult(\X_0)$, the natural morphism $\beta_0 : \X_0 \to X_0$ is transversal, and hence $Y_0 = \beta_0(\Y_0) \subset \Max\mult(X_0)$ defines a regular center in $X_0$. Let $X_1\to X$ be the blow up at $Y_0$ and let  $\widetilde{{X}_1}\to \overline{X}_0$ be the blow up at $\overline{Y}_0$. Then  by Theorem~\ref{thm:Frs-sequence}, there is a birational map, say $\widetilde{{X}_1} \to X_1$,
which is finite. Let $\X_1$ be the normalization of $\widetilde{{X}_1}$.  Then  it follows that $\X_1$ is the normalization of $X_1$. Moreover, if $\max\mult(\X_1) = \max\mult(\X_0)$, then $\max\mult(X_1) = \max\mult(X_0)$, and the morphism $\beta_1 : \X_1 \to X_1$ is again transversal.
\end{proof}

\subsection*{Strongly transversal morphisms} Let $\beta : X' \to X$ be transversal morphism of generic rank $r$ and suppose that $X$ has maximum multiplicity $s \geq 2$. Under these hypotheses, Theorem~\ref{thm:Frs-sequence} says that any sequence of $F_{rs}$-permissible blow ups on $X'$ induces a sequence of $F_s$-permissible blow ups on $X$. In general, the converse to this Theorem, by blowing up centers over $X$, fails because $\beta$ does not map $F_{rs}(X')$ surjectively to $F_s(X)$. Hence not every center contained in $F_s(X)$ induces a center in $F_{rs}(X')$. This observation motivates the following definition.

\begin{definition} \label{def:strongly-transversal} 
We will say that a transversal morphism of generic rank $r$,  $\beta : X' \to X$,  is \emph{strongly transversal} if $F_{rs}(X')$ is homeomorphic to $F_s(X)$ via $\beta$, and every $F_{rs}$-local sequence over $X'$, say $X' \leftarrow X'_1 \leftarrow \dotsb \leftarrow X'_N$,
induces an $F_s$-local sequence over $X$ in the sense of Remark~\ref{rmk:Frs-local-seqs}, and a commutative diagram as follows,
\begin{equation} \label{diag:def-strong-transversal}
\xymatrix@R=15pt{
 	X' \ar[d]^{\beta} &
 		X'_1 \ar[l] \ar[d]^{\beta_1} &
 		\dotsb \ar[l] &
 		X'_{N-1} \ar[l] \ar[d]^{\beta_{N-1}} &
 		X'_N \ar[l] \ar[d]^{\beta_N} \\
 	X &
 		X_1 \ar[l] &
 		\dotsb \ar[l] &
 		X_{N-1} \ar[l] &
 		X_N, \ar[l]
} \end{equation}%
where each $\beta_i$ is finite of generic rank $r$ (see Remark~\ref{rmk:Frs-local-seqs} above), and induces a homeomorphism between $F_{rs}(X'_i)$ and $F_s(X_i)$. In this case we will also say that $F_{rs}(X')$ is \emph{strongly homeomorphic} to $F_s(X)$. Note in particular that this definition yields $F_{rs}(X'_N) = \emptyset$ if and only if $F_s(X_N) = \emptyset$.
\end{definition}

\begin{remark}
This definition is equivalent to saying that any $F_s$-local sequence on $X$ induces an $F_{rs}$-local sequence on $X'$, and a commutative diagram like \eqref{diag:def-strong-transversal}. In particular, note that the homeomorphism between $F_{rs}(X')$ and $F_s(X)$ is preserved by transformations.
\end{remark}

\begin{remark} \label{rmk:strongly-transversal-equiv-surjective} 
Note that, in virtue of \ref{resumen}, checking that a finite morphism $\beta : X' \to X$ of generic rank $r$ is strongly transversal is equivalent to showing  that $\beta : X' \to X$ maps $F_{rs}(X')$ surjectively onto $F_s(X)$ (where $s$ denotes the maximum multiplicity of $X$), and that this property is preserved by any $F_s$-local sequence.
\end{remark}

From the point of view of resolution of singularities, $\beta : X' \to X$ is strongly transversal if and only if the processes of lowering the maximum multiplicity of $X'$ and $X$ are equivalent. In the case of varieties over a field of characteristic zero, it is possible to give a characterization of strong transversality in terms of Rees algebras (see Theorem \ref{thm:strong-homeo} in Section~\ref{demo_theo}). In the next two sections we introduce the theory of Rees algebras and elimination.


\part{Rees algebras and elimination}  \label{parte_2}

\section{Rees algebras and local presentations of the multiplicity} 
\label{ReesAlgebras}

Let $X$ be a $d$-dimensional  algebraic variety defined over a perfect field $k$. When seeking a resolution of singularities of $X$ 
we may start by constructing a sequence of blow ups along closed regular equimultiple centers, say $X \leftarrow X_1 \leftarrow \ldots \leftarrow X_{m-1} \leftarrow X_m$, so that
\[
	\max\mult(X) = \max\mult(X_1)
	= \dotsb = \max\mult(X_{m-1}) > \max\mult(X_m).
\]%
In general, the maximum multiplicity locus of a variety does not define a regular center. Thus the multiplicity function has to be refined in order to obtain regular centers at each stage of the process. It is in this setting that the machinery provided by Rees algebras comes in handy.
For a more detailed introduction to Rees algebras   and their use in resolution of singularities we refer to \cite{notas_Austria} or  \cite{EV} (we will not go into these detalis here). In this paper we will use Rees algebras in the formulation and the proof of Theorem \ref{thm:strong-homeo} and in the proof of  Theorem \ref{rango_r}. Thus,  we will devote  the present  section and the next  to  review the main definitions and results on Rees algebras that we will be needing  in sections \ref{demo_theo} and \ref{construccion}.

\begin{definition} \label{Reesalg} Let $B$ be a Noetherian ring, and let
$\{I_n\}_{n\in {\mathbb N}}$ be a sequence of ideals in $B$ satisfying the
following conditions: $I_0=B$; and $I_k\cdot I_l\subset I_{k+l}$ for all $l,k\in {\mathbb N}$. The graded subring ${\mathcal G}=\bigoplus_{n\geq 0}I_nW^n$ of
the polynomial ring $B[W]$ is said to be a {\em $B$-Rees algebra}, or a  Rees algebra over $B$, if it is a finitely generated $B$-algebra.  A Rees algebra can be described by giving a finite set of generators, say $\{f_{1}W^{n_1},\ldots,f_{s}W^{n_s}\}$, with $f_{i}\in B$ for $i=1\ldots,s$,  in which case we will write ${\mathcal G}=B[f_{1}W^{n_1},\ldots,f_{s}W^{n_s}]\subset B[W]$.

The notion of Rees algebra extends naturally to schemes. Consider a non-necessarily affine scheme $V$. We will say that a quasi-coherent subsheaf $\G \subset \mathcal{O}_V[W]$ is a Rees algebra over $V$, or simply an $\mathcal{O}_V$-Rees algebra, if $V$ can be covered by affine charts of the form $U = \Spec(B)$, where $\Gamma(U,\G)$ is a Rees algebra over $B$. Note that, in this way, there is a natural one-to-one correspondence between the Rees algebras defined over a ring $B$, and those defined over the affine scheme $\Spec(B)$. Thus sometimes we will abuse our notation, and we will make no distinction between them.
\end{definition}

\begin{parrafo} \label{singularlocus}{\bf The singular locus  of a Rees algebra.}  \cite[1.2]{positive} Let $\G = \bigoplus_{n \in \mathbb{N}} \mathcal{I}_n W^n$ be a Rees algebra over a \emph{regular} scheme $V$. The \emph{singular locus} of $\G$ is defined by
\[
	\Sing(\G)
	:= \bigcap_{n \geq 1} \left\{ \xi \in V \mid \nu_\xi(\mathcal{I}_n) \geq n \right \},
\]%
where $\nu_\xi(\mathcal{I}_n)$ denotes the order of the ideal sheaf $\mathcal{I}_n$ at $\xi$. When $V$ is excellent, $\Sing(\G)$ turns out to be a closed subset of $V$. In addition, when $V$ is affine, put $V = \Spec(S)$, and $\G$ is generated by elements $f_1W^{N_1},\ldots,f_sW^{N_s}$, one has that (see \cite[Proposition 1.4]{EV}):
\[
	\Sing(\G)
	= \bigcap_{i = 1}^s \left\{ \xi \in V
		\mid \nu_\xi(f_i) \geq N_i \right \}.
\]%
\end{parrafo}

\begin{parrafo}\label{Ejemplo_Multiplicidad} 
If $X$ is a variety over a perfect field $k$, then, locally in an (\'etale) neighborhood $U$ of a point of maximum multiplicity, there is an embedding in a smooth variety $V$ over $k$ and an ${\mathcal O}_V$-Rees algebra ${\mathcal G}$ such that $\Max \mult(X) \cap U = \Sing {\mathcal G}$, where $\Max \mult(X)$ denotes the (closed) set of points of maximum multiplicity of $X$.  In fact, this equality holds in a very strong sense, to be defined in \ref{d_representable}. This issue will be treated along this section (see Theorem \ref{Equim_Orlando}, and the discussion in \ref{Main_Ideas_Mult} for more precise details). We shall indicate in \ref{411} that a similar statement can be made for the maximum value of the Hilbert-Samuel function on $X$ (see \cite{Hironaka77}).
\end{parrafo}

\begin{parrafo} \label{weaktransforms} {\bf Transforms of Rees algebras by local sequences.} 
{\em Transforms by permissible blow ups.} Let $V$ be a regular scheme, and let ${\mathcal G}$ be a Rees algebra on $V$. 
A regular closed subscheme $Y\subset V$ is said to be {\em permissible} for ${\mathcal G}=\bigoplus_{n}J_nW^n\subset{\mathcal O}_{V}[W]$ if     $Y\subset
\Sing {\mathcal G}$.  A {\em permissible blow up} is the blow up at a permissible center,  $V\leftarrow V_1$. If 
$H_1\subset V_1$ denotes  the exceptional divisor, then for each
$n\in {\mathbb N}$, one has that $J_n{\mathcal O}_{V_1}=I(H_1)^nJ_{n,1}$ 
for some  sheaf of ideals $J_{n,1}\subset {\mathcal
O}_{V_1}$.  The   {\em  transform of ${\mathcal
G}$}  in $V_1$ is then defined as ${\mathcal G}_1:=\bigoplus_{n}J_{n,1}W^n$ (see \cite[Proposition 1.6]{EV}).

{\em  Transforms by smooth morphisms.} 
Let $V\leftarrow V_1$ be either the restriction to some open subset $V_1$ of $V$ or the multiplication by an affine 
space, say $V_1=V\times_k{\mathbb A}_k^n$. Then we define the transform ${\mathcal G}_1$ of ${\mathcal G}$ in 
$V_1$  as the pull-back of ${\mathcal G}$ in $V_1$. 

A {\em ${\mathcal G}$-local sequence over $V$} is a local sequence over $V$, 
\begin{equation}\label{ABCranso1}
\xymatrix @C=30pt{
(V=V_0,\mathcal{G}=\mathcal{G}_0) & \ar[l]_-{\pi_{0}}  (V_1,\mathcal{G}_1) & \ar[l]_-{\pi_{1}}  \cdots & \ar[l]_-{\pi_{m-1}} (V_m,\mathcal{G}_m),
}
\end{equation}
where each  $\pi_i$ is either a permissible monoidal transformation for ${\mathcal G}_i\subset {\mathcal O}_{V_i}[W]$ (and  ${\mathcal G}_{i+1}$ is the transform of ${\mathcal G}_{i}$ in the sense of  \ref{weaktransforms}), or a smooth morphism (and  ${\mathcal G}_{i+1}$ is   the pull-back of ${\mathcal G}_i$ in $V_{i+1}$).
\end{parrafo}

\begin{definition} \label{local_V}
Let $V$ be a regular scheme, and $\G$ an $\mathcal{O}_V$-Rees algebra. A \emph{resolution} of $\G$ consists on a sequence of permissible blow ups,
say
\[ \xymatrix {
	(V=V_0,\mathcal{G}=\mathcal{G}_0) &
	\ar[l]  (V_1,\mathcal{G}_1) &
	\ar[l]  \cdots &
	\ar[l] (V_m,\mathcal{G}_m),
} \]%
so that $\Sing(\G_m) = \emptyset$.
\end{definition}

\noindent{\bf Local presentations of the multiplicity} 

\medskip

In \ref{weaktransforms} and Definition \ref{local_V} we discussed about local sequences of transformations of a Rees algebra $\G$ over a regular scheme $V$. In the next definition we will fix a singular variety $X$ and will consider local transformations on $X$.

\begin{definition} \label{def:local_X}
We will say that a sequence of transformations $X = X_0 \leftarrow X_1 \leftarrow \ldots \leftarrow X_m$ is a {\em $\mult(X)$-local sequence} if for $i=0,\ldots, m-1$,  $X_i\leftarrow X_{i+1}$ is either blow at a smooth center $Y\subset \Max\mult(X_i)$, or else $X_{i+1}=X_i\times {\mathbb A}_k^1$ and $X_i\leftarrow X_{i+1}$ is the smooth morphism defined by the natural projection.
\end{definition}

\begin{definition}\label{d_representable} 
	We will say that {\em $\mult(X)$ is locally representable for a scheme} $X$, if  for each point $\xi\in \Max\mult X$ there is an open (\'etale)  neighborhood (which we denote again by $X$ to ease the notation),   an embedding  in some regular scheme, say $X \subset V$, and a  ${\mathcal O}_{V}$-Rees algebra ${\mathcal G}$ so that the following conditions hold:
	
	\begin{enumerate}[\indent (1)]
		\item There is an equality, $\Max\mult(X)=\Sing(\G)$, of closed sets;
		
		\item Every $\mult(X)$-local sequence as in Definition~\ref{def:local_X}, say
		$X = X_0 \leftarrow X_1 \leftarrow \ldots \leftarrow X_m$ with 
		\[
		\max\mult(X) = \max\mult(X_0) = \dotsb = \max\mult(X_{m-1}) \geq \max\mult(X_m)
		\]%
		induces a ${\mathcal G}$-local sequence 
		\begin{equation}\label{localG}
		\xymatrix @C=30pt{
			(V=V_0,\mathcal{G}=\mathcal{G}_0) &
			\ar[l]  (V_1,\mathcal{G}_1) &
			\ar[l]  \cdots &
			\ar[l] (V_m,\mathcal{G}_m),
		} \end{equation}%
		and: $\Max\mult(X_i) = \Sing(\G_i)$ for $i=1, \dots, m-1$; if $\max\mult({X}_{m-1}) = \max\mult({X}_{m})$, then
		$\Max\mult({X}_m) = \Sing(\G_m)$; if $\max\mult(X_{m-1}) > \max\mult(X_m)$, then
		$\Sing(\G_m) = \emptyset$.
		
		\item Conversely, any  $\G$-local sequence as \eqref{localG} with $\Sing(\G_i) \neq \emptyset$ for $i=0,\ldots,m-1,$ 
		induces an $\mult(X)$-local sequence $X = X_0 \leftarrow X_1 \leftarrow \ldots \leftarrow X_m$ 
		with
		\[
		\max\mult(X) = \max\mult(X_0) = \dots = \max\mult(X_{m-1}) \geq \max\mult(X_m),
		\]%
		and: $\Max\mult(X_i) = \Sing(\G_i)$ for $i=1, \dots, m-1$;  
		$\max\mult(X_{m-1}) = \max\mult(X_{m})$ if $\Max\mult(X_m) = \Sing(\G_m) \neq \emptyset$, 
		$\max\mult(X_{m-1}) > \max\mult(X_{m})$ if $\Sing(\G_m) = \emptyset$. 
	\end{enumerate}
\end{definition}  

\begin{parrafo}\label{locrep}  It can be proved that the previous condition is fulfilled for varieties defined over a prefect field, at least locally in \'etale topology. Namely, given a variety $X$ over a perfect field $k$, and a point $\xi \in X$, there exists an \'etale neighborhood of $X$ at $\xi$, say $U$, a closed immersion of $U$ into a regular variety $V$, and an $\mathcal{O}_V$-Rees algebra $\G$ satisfying conditions (1), (2) and (3) (see \cite[\S7]{multiplicidad}). This local embedding, together with the Rees algebra $\G$, will be called a \emph{local presentation} of $\Max\mult(X)$. 
\end{parrafo}

\begin{remark} \label{rmk:presentasions-etale}
Hereafter, when we refer to a local presentation of $X$, we shall simply write a pair $(V,\G)$, assuming that a closed immersion $X \hookrightarrow V$ has already been fixed, and omitting the fact that $X \hookrightarrow V$ might by defined in an \'etale neighborhood.
\end{remark}

\begin{theorem} \label{Equim_Orlando} \cite[\S5]{multiplicidad} 
Let $X$ be an equidimensional scheme of finite type over a perfect field $k$, and let $s = \max\mult(X)$. Then, $\mult(X)$ is locally representable.
\end{theorem}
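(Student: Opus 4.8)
The goal is to prove Theorem~\ref{Equim_Orlando}: that for an equidimensional scheme $X$ of finite type over a perfect field $k$, the multiplicity function $\mult(X)$ is locally representable in the sense of Definition~\ref{d_representable}. Since the statement is explicitly attributed to \cite[\S5]{multiplicidad}, the role of the proof here is to recall the main construction and indicate why the three conditions of the definition hold, rather than to reprove everything from scratch. The plan is to produce, \'etale-locally around a point $\xi \in \Max\mult(X)$, a closed immersion $X \hookrightarrow V$ into a regular scheme together with an $\mathcal{O}_V$-Rees algebra $\G$ whose singular locus equals $\Max\mult(X)$ and whose behavior under permissible transformations mirrors that of the multiplicity function.

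First I would reduce to the local, embedded situation. Fixing $\xi \in \Max\mult(X)$ with $s = \max\mult(X)$, I would pass to an \'etale neighborhood in which $X$ admits a closed immersion into a regular scheme $V$ of the appropriate dimension; this is standard for finite-type schemes over a perfect field, using that regularity and the smooth local structure can be arranged after an \'etale base change (and the multiplicity is insensitive to \'etale morphisms, by Remark~\ref{rmk:multiplicity-etale}). The heart of the construction is then to attach to the immersion $X \subset V$ a Rees algebra $\G$ built from local equations of $X$ together with suitable differential operators (a presentation of the multiplicity via the order of an ideal after maximal contact / the ``$\mathcal{G}$-algebra'' of the embedding), arranged so that $\Sing(\G) = \Max\mult(X)$ as closed sets. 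This yields condition~(1).

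Next I would verify conditions~(2) and~(3), namely the compatibility with local sequences in both directions. The key point is that the Rees algebra $\G$ has been constructed so that permissible centers for $\G$ (regular centers inside $\Sing(\G)$) coincide with regular centers inside $\Max\mult(X)$, and that the transform $\G_1$ of $\G$ under a permissible blow up $V \leftarrow V_1$ again represents the multiplicity of the strict transform $X_1$: one shows $\Sing(\G_1) = \Max\mult(X_1)$ precisely on the locus where the maximum multiplicity $s$ is preserved, and $\Sing(\G_1) = \emptyset$ exactly when the maximum multiplicity drops. The same must be checked for the smooth transforms (open restriction and multiplication by $\mathbb{A}^1_k$), which is comparatively routine since both $\Max\mult$ and $\Sing(\G)$ pull back compatibly. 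Iterating along a local sequence then gives the two-way correspondence between $\mult(X)$-local sequences and $\G$-local sequences demanded by the definition.

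The main obstacle, and the content that genuinely requires \cite{multiplicidad}, is establishing that the order function attached to $\G$ faithfully detects the maximum multiplicity \emph{after} blowing up, i.e., that the equality $\Sing(\G_i) = \Max\mult(X_i)$ is stable under the entire local sequence and not merely at the initial stage. This stability is where the precise choice of generators of $\G$ (closing up under the relevant differential operators so that the algebra is, up to integral closure, canonically associated to $\Max\mult(X)$) is essential; without it one only gets the set-theoretic equality at stage zero. I would therefore lean on the detailed presentation-theoretic results of \cite[\S5]{multiplicidad} for this stability statement, treating it as the technical core, while the reduction to the embedded local case and the handling of the smooth transforms are the straightforward parts of the argument.
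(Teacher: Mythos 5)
Your overall skeleton (pass to an \'etale neighborhood, embed $X$ in a regular $V$, produce a Rees algebra $\G$ with $\Sing(\G)=\Max\mult(X)$, check stability under local sequences, and defer the hard stability statement to \cite{multiplicidad}) has the right shape, but your description of how $\G$ is actually built misses the key idea, and the substitute you offer would not work. You propose to build $\G$ ``from local equations of $X$ together with suitable differential operators (a presentation of the multiplicity via the order of an ideal after maximal contact).'' That is the Hilbert-Samuel-type presentation (cf.\ \ref{411}), not the multiplicity presentation: for a non-hypersurface $X\subset V$ the order of the defining ideal in an arbitrary embedding does not detect the multiplicity of $X$, no differential saturation of those equations fixes this, and it is not even clear what weights one would assign to the generators. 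So condition (1) of Definition~\ref{d_representable}, $\Sing(\G)=\Max\mult(X)$, is not something that can simply be ``arranged''; it is precisely the point that needs an argument, and your proposal gives none.

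The construction in \ref{Main_Ideas_Mult} is different, and this is also where the \'etale localization is genuinely used (a closed immersion into a regular scheme exists already in the Zariski topology): one first produces a \emph{finite transversal projection} $S\subset B$ with $S$ regular and $[K(B):K(S)]=s=\max\mult(B)$, satisfying condition (*) of \ref{condicion_estrella} at $\xi$. Writing $B=S[\theta_1,\dotsc,\theta_m]$, one embeds $X$ into $V=\Spec(S[Z_1,\dotsc,Z_m])$ and takes $\G=\mathcal{O}_V[f_1(Z_1)W^{d_1},\dotsc,f_m(Z_m)W^{d_m}]$, where $f_i\in S[Z_i]$ is the minimal polynomial of $\theta_i$ over $S$ and $d_i=\deg f_i$. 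The weights $d_i$ and the equality $\Sing(\G)=F_s(X)$ come from Zariski's multiplicity formula (Theorem~\ref{MultForm}): a point of $X$ has multiplicity $s$ if and only if the finite projection of generic rank $s$ is totally ramified there, if and only if each $f_i$ has order at least $d_i$ at the corresponding point of $V$. This mechanism --- converting multiplicity into order conditions via a finite projection of generic rank equal to the multiplicity --- is entirely absent from your write-up, and without it neither condition (1) nor the stability statements in (2) and (3) can be set up correctly. Deferring the stability under permissible blow ups to \cite[\S 5, \S 7]{multiplicidad} is legitimate for a cited theorem, but the object whose stability one defers must first be the right one.
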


\begin{parrafo}\label{411} {\bf Hilbert-Samuel vs. Multiplicity.} In \cite{Hironaka77} Hironaka proves that the Hilbert-Samuel function of an algebraic variety $X$, or more precisely the stratum of points having the same Hilbert-Samuel function, is locally representable.
There, instead of $\mult(X)$-local sequences, one has to consider $\text{HS}_X$-local sequences: set $\Max\text{HS}_X$ as the closed set of points with maximum Hilbert-Samuel function on $X$. We will say that $X = X_0 \leftarrow X_1 \leftarrow \ldots \leftarrow X_m$ is an $\text{HS}_X$-local sequence if for $0 \leq i \leq m-1$, the morphism $X_i \leftarrow X_{i+1}$ is either a blow up at a smooth center $Y \subset \Max\text{HS}_{X_i}$ (this is normal flatness), or else $X_i \leftarrow X_{i+1}$ is a smooth morphism  with $X_{i+1} = X_i \times {\mathbb A}_k^1$. 

By considering the Hilbert-Samuel function, or the multiplicity function, we ultimately seek to produce a resolution of singularities of $X$. In the first case, we seek for a resolution by blowing up at normally flat centers; in the second case, one uses equimultiple centers. Over fields of characteristic zero, in either of these two approaches, Rees algebras can be used in order to produce such centers (as was done in \ref{locrep} for the case of the multiplicity).
\end{parrafo}

\begin{parrafo} {\bf Some ideas behind the proof of Theorem \ref{Equim_Orlando}.} \label{Main_Ideas_Mult} Here we seek for local representations of the multiplicity on $X$ in the sense of Definition~\ref{d_representable}. Suppose that $X$ is affine, say $X = \Spec(B)$, where $B$ is an equidimensional algebra of finite type over a perfect field $k$. Fix a closed point $\xi \in F_s(X)$. After replacing $X$ by a suitable \'etale neighborhood of $\xi$, we may assume that there exists a regular domain $S \subset B$ so that  $B$ is a finite extension of $S$ of generic rank $s$, which necessarily satisfies property (*) from  \ref{condicion_estrella} at $\xi$. Then, choose $\theta_1, \dotsc, \theta_m \in B$ so that $B = S[\theta_1, \dotsc, \theta_m]$. These  elements induce a surjective morphism $S[Z_1, \dotsc, Z_m] \to B = S[\theta_1, \dotsc, \theta_m]$, where $Z_1, \dotsc, Z_m$ represent variables, and we can take $V = \Spec(S[Z_1, \dotsc, Z_m])$. Observe that this morphism induces a natural embedding of $X$ into $V$. Moreover, since $\theta_i$ is integral over $S$, it has a minimal polynomial over $K$, the field of fractions of $S$.  Denote it by $f_i(Z_i) \in K[Z_i]$, and set $d_i = \deg(f_i)$. It can be shown that the coefficients of these polynomials actually belong to $S$, i.e., that $f_i(Z_i) \in S[Z_i]$. Finally, (see \cite[\S 7.1]{multiplicidad} for details), a local presentation of $F_s(X)$ is given by the Rees algebra:
\[
	\G_V
	= \mathcal{O}_V [f_1(Z_1) W^{d_1}, \dotsc, f_m(Z_m) W^{d_m}]
	\subset \mathcal{O}_V[W].
\]%

\end{parrafo}

{\bf\noindent Comparing different presentations: weak equivalence and canonical representatives.}
 Consider an equidimensional scheme $X$ where $\mult(X)$ is representable via local embeddings, and fix a point $\xi \in \Max\mult(X)$. In principle, there may be many presentations of $\Max\mult(X)$ (locally at $\xi$) coming from different immersions and Rees algebras. As we want to use local presentations to find a resolution of singularities of $X$ in the case of characteristic zero, one would like to show that two different presentations lead to the same process of resolution. Assume that a closed immersion $X \hookrightarrow V$ has been fixed, and that we have two presentations of $\Max\mult(X)$ given by two $\mathcal{O}_V$-Rees algebra $\G$ and $\G'$. Then, 
 we would like that $\G$ and $\G'$ lead to the same process of resolution. As both of them represent $\Max\mult(X)$, we have that $\Sing(\G) = \Max\mult(X) = \Sing(\G')$, and this equality is preserved by local sequences. This motivates the following definition.

\begin{definition} \label{def_weak_equiv}
Two Rees algebras $\G$ and $\G'$ over a regular scheme $V$ are \emph{weakly equivalent} if:
\begin{enumerate}[\indent(1)]
\item $\Sing(\G) = \Sing(\G')$.
\item Any $\G$-local sequence on $V$, say $V \leftarrow V_1 \leftarrow \dotsb \leftarrow V_m$, is a $\G'$-local sequence, and vice-versa.
\item For any local sequence as that in (2), if $\G_i$ and $\G'_i$ denote the transforms of $\G$ and $\G'$ on $V_i$ respectively, we have that $\Sing(\G_i) = \Sing(\G'_i)$.
\end{enumerate}
\end{definition}

It is easy to see that weak equivalence defines an equivalence relation on the family of $\mathcal{O}_V$-Rees algebras, and thus each $\mathcal{O}_V$-Rees algebra $\G$ has a class of equivalence. By definition, if an $\mathcal{O}_V$-Rees algebra represents $\Max\mult(X)$ (via a fixed immersion $X \hookrightarrow V$), any other element of its class also do so. It can be proved that two weakly equivalent $\mathcal{O}_V$-Rees algebras induce the same invariants, and hence they share the same process of resolution of Rees algebras (over fields of characteristic zero). In this way one can see that the process of simplification of the maximum multiplicity of a singular variety $X$ is independent of the local presentations chosen (see \cite[\S27]{notas_Austria} or \cite{EV}).

\begin{remark}
\label{rmk:tree-of-transformations}
Consider a Rees algebra $\G$ defined over a smooth variety $V$. 
One can consider a tree with $V$ as its root and consisting on all the $\G$-local sequences as in \eqref{ABCranso1}. This tree will be called the \emph{tree of permissible transformations} of $\G$, and we shall denote it by $\mathcal{F}_V(\G)$. Given another Rees algebra $\G'$ over $V$, if every $\G$-local sequence induces a $\G'$-local sequence, then we shall say that $\mathcal{F}_V(\G) \subset \mathcal{F}_V(\G')$. Note that, with this notation, $\G$ is weakly equivalent to $\G'$ if and only if $\mathcal{F}_V(\G) \subset \mathcal{F}_V(\G')$ and $\mathcal{F}_V(\G') \subset \mathcal{F}_V(\G)$ and, in such case, we shall write $\mathcal{F}_V(\G) = \mathcal{F}_V(\G')$.
\end{remark}

\begin{parrafo} \label{integral_closure_Rees}  {\bf Integral closure.}  
Consider a regular domain $S$, and a $S$-Rees algebra $\G \subset S[W]$. We define the integral closure of $\G$, which we shall denote by $\overline{\G}$, as that of $\G$ regarded as a ring, inside its field of fractions. Since $S$ is normal, $S[W]$ is normal, and hence $\overline{\G} \subset S[W]$. In addition, if $S$ excellent, $\overline{\G}$ is finitely generated over $S$ (see \cite[7.8.3 ii) and vi)]{EGAIV}). Thus $\overline{\G}$ is again a Rees algebra over $S$. As integral closure is a concept of local nature, this notion extends naturally to non-affine schemes. In this way, given a regular excellent scheme $V$, and an $\mathcal{O}_V$-Rees algebra $\G$, we may talk unambiguously about the integral closure of $\G$, say $\overline{\G}$, which is again an $\mathcal{O}_V$-Rees algebra.
\end{parrafo}

\begin{remark}  
Observe that the regularity of $S$ (resp. $V$) does not play an essential role on the previous discussion. Thus the concept of integral closure extends to any Rees algebra defined over a normal excellent domain (resp. normal excellent scheme). 
\end{remark}

\begin{lemma}[{cf. \cite[Proposition~5.4]{EV}}]   
Let $V$ be a regular excellent scheme, and $\G$ an $\mathcal{O}_V$-Rees algebra. Then the integral closure of $\G$, say $\overline{\G}$, is weakly equivalent to $\G$.
\end{lemma}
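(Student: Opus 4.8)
The plan is to verify directly the three conditions of Definition~\ref{def_weak_equiv} for the pair $(\G,\overline{\G})$, the driving principle being that forming the integral closure is compatible both with the order function that defines the singular locus and with the two types of transformation occurring in a local sequence.

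First I would establish condition~(1), the equality $\Sing(\G)=\Sing(\overline{\G})$. The inclusion $\Sing(\overline{\G})\subseteq\Sing(\G)$ is immediate from $\G\subseteq\overline{\G}$, since enlarging the algebra can only shrink the singular locus (cf.~\ref{singularlocus}). For the reverse inclusion, take a homogeneous element $gW^m\in\overline{\G}$; being homogeneous and integral over $\G$, it satisfies a \emph{homogeneous} monic relation $g^k+b_1g^{k-1}+\dots+b_k=0$ with $b_jW^{jm}\in\G$, i.e.\ $b_j\in\mathcal{I}_{jm}$ (extract the degree-$km$ components of an integral relation). Fix $\xi\in\Sing(\G)$, so $\nu_\xi(b_j)\geq jm$ for all $j$. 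Since the associated graded ring of the regular local ring $\mathcal{O}_{V,\xi}$ is a polynomial ring, hence a domain, the order function $\nu_\xi$ is multiplicative; if one had $\nu_\xi(g)=t<m$ then $\nu_\xi(g^k)=kt$ while $\nu_\xi(b_jg^{k-j})\geq jm+(k-j)t>kt$ for each $j\geq 1$, so $g^k$ would be the unique term of strictly smallest order in the relation, contradicting that the sum vanishes. Hence $\nu_\xi(g)\geq m$ and $\xi\in\Sing(\overline{\G})$.

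Next, for conditions~(2) and~(3), the key point I would prove is that integral closure commutes with the transforms of a local sequence. For a smooth morphism $V\leftarrow V_1$ (an open restriction, or a product with an affine space) the transform is the pullback, and integral closure commutes with localization and with polynomial extension over a normal ring; thus the transform of $\overline{\G}$ is the integral closure of the transform of $\G$. For a permissible blow up $V\leftarrow V_1$ with exceptional divisor $H_1$, I would argue on the relation itself: letting $x$ be a local equation of $H_1$, the definition of the transform in~\ref{weaktransforms} gives factorizations $b_j\mathcal{O}_{V_1}=I(H_1)^{jm}b_j'$ with $b_j'\in J_{jm,1}$, and $g\mathcal{O}_{V_1}=I(H_1)^m g'$. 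Dividing the homogeneous relation above by $x^{km}$ yields $(g')^k+b_1'(g')^{k-1}+\dots+b_k'=0$, exhibiting the transform $g'W^m$ of $gW^m$ as integral over $\G_1$; thus $(\overline{\G})_1\subseteq\overline{\G_1}$. Conversely $\G_1\subseteq(\overline{\G})_1$, since the inclusion $\mathcal{I}_n^{\G}\subseteq\mathcal{I}_n^{\overline{\G}}$ survives cancellation of the invertible factor $I(H_1)^n$. Hence $\G_1\subseteq(\overline{\G})_1\subseteq\overline{\G_1}$, so $\G_1$ and $(\overline{\G})_1$ have a common integral closure.

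Finally I would assemble these facts by induction along a local sequence. Because $\Sing(\G)=\Sing(\overline{\G})$, a regular center is permissible for $\G$ if and only if it is permissible for $\overline{\G}$, and smooth morphisms are always admissible; therefore every $\G$-local sequence is a $\overline{\G}$-local sequence and conversely, which is condition~(2). For condition~(3), at each stage $\G_i$ and $(\overline{\G})_i$ share an integral closure by the previous paragraph, so applying the equality of condition~(1) at that stage (to $\G_i$ and to $(\overline{\G})_i$ separately) gives $\Sing(\G_i)=\Sing(\overline{\G_i})=\Sing(\overline{(\overline{\G})_i})=\Sing((\overline{\G})_i)$. I expect the main obstacle to be precisely the permissible-blow-up case of the commutation step: one must check that dividing the integral relation by the exceptional factor $x^{km}$ really lands the transformed coefficients $b_j'$ in the correct graded pieces $J_{jm,1}$ of $\G_1$, which relies on the compatibility of the factorization $J_n\mathcal{O}_{V_1}=I(H_1)^nJ_{n,1}$ with the relation together with the fact that a homogeneous integral element admits a homogeneous integral relation.
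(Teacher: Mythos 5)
The paper gives no proof of this lemma, deferring entirely to \cite[Proposition~5.4]{EV}; your argument is correct and is essentially the standard one found there: extract a homogeneous equation of integral dependence, use that the order function at points of the regular scheme $V$ is multiplicative (the associated graded ring is a domain) to get $\Sing(\G)=\Sing(\overline{\G})$, and divide the integral relation by the appropriate power of the exceptional equation to propagate integrality through transforms. The one point worth tightening is the induction: since $(\overline{\G})_i$ need not coincide with $\overline{\G_i}$ on the nose, you should state your one-step blow-up computation for an arbitrary integral inclusion of Rees algebras $\H\subseteq\H'$ (it applies verbatim), which is what lets you conclude $\G_{i+1}\subseteq(\overline{\G})_{i+1}\subseteq\overline{\G_{i+1}}$ at every stage and then invoke the singular-locus equality stage by stage.
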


\begin{parrafo} {\bf Differential Rees algebras.} \label{Dif_Algebras} Let $S$ be a smooth algebra over a perfect field $k$. For any non-negative integer $n$, denote by $\Diff^n_{S|k}$ the module of differential operators of order at most $n$ of $S$ over $k$. A Rees algebra over $S$, say $\G = \bigoplus_{i \in \mathbb{N}} I_i W^i$, is said to be \emph{differentially saturated} (with respect to $k$) if the following condition holds: for any homogeneous element $fW^N \in \G$, and any differential operator $\Delta \in \Diff^n_{S|k}$, with $n < N$, we have that $\Delta(f)W^{N-n} \in \G$. In particular, $I_{i+1} \subset I_i$, since $\Diff^0_{S|k} \subset \Diff^1_{S|k}$. Note that this notion extends naturally to Rees algebras defined on a smooth variety over $k$, say $V$: in this case, we denote by $\Diff^n_{V|k}$ the sheaf of differential operators of $V$ over $k$, and the condition of being differentially saturated is defined locally.

Given a smooth variety over a perfect field $k$, and an arbitrary $\mathcal{O}_V$-Rees algebra $\G$, there is a natural way to construct the smallest differentially saturated algebra containing $\G$ (see \cite[Theorem~3.4]{integraldifferential}), which we will denote by $\Diff(\G)$. It can be checked that $\Diff(\G)$ is again finitely generated over $\mathcal{O}_V$ (cf. \cite[Proof of Theorem~3.4]{integraldifferential}), and hence it is a Rees algebra. Moreover $\Diff(\G)$ is weakly equivalent to $\G$ (see Giraud's Lemma \cite[Theorem~4.1]{EV}).
\end{parrafo}

\begin{theorem}[{\cite{Hironaka05}, \cite[Theorem 3.10]{Indiana}}]
\label{thm:canonical-indiana}
Let $V$ be a regular variety over a perfect field $k$. Then two $\mathcal{O}_V$-Rees algebra $\G$ and $\G'$ are weakly equivalent if and only if $\overline{\Diff(\G)} = \overline{\Diff(\G')}$.
\end{theorem}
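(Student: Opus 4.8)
The strategy is to exhibit $\overline{\Diff(\G)}$ as a \emph{canonical representative} of the weak equivalence class of $\G$; more precisely, as the \emph{largest} $\mathcal{O}_V$-Rees algebra weakly equivalent to $\G$. Granting this, both implications follow formally (write $\G \sim \H$ for weak equivalence). For ``$\Leftarrow$'', recall that $\G \sim \Diff(\G)$ by Giraud's Lemma (see \ref{Dif_Algebras} and \cite[Theorem~4.1]{EV}) and that $\Diff(\G) \sim \overline{\Diff(\G)}$ by the integral-closure Lemma (cf. \cite[Proposition~5.4]{EV}); since weak equivalence is transitive, $\G \sim \overline{\Diff(\G)}$, and likewise $\G' \sim \overline{\Diff(\G')}$. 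Hence $\overline{\Diff(\G)} = \overline{\Diff(\G')}$ forces $\G \sim \G'$. For ``$\Rightarrow$'', if $\G \sim \G'$ they determine the same class, so their largest representatives must coincide as subalgebras of $\mathcal{O}_V[W]$, i.e.\ $\overline{\Diff(\G)} = \overline{\Diff(\G')}$.

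It thus remains to prove the maximality statement. First I would record two elementary facts about the tree of permissible transformations (Remark~\ref{rmk:tree-of-transformations}): it is \emph{monotone}, in the sense that $\G \subset \H$ forces $\mathcal{F}_V(\H) \subset \mathcal{F}_V(\G)$ (adding generators can only add constraints, via \ref{singularlocus}); and consequently it has the \emph{sandwich property}: if $\G \subset \H \subset \mathcal{K}$ and $\G \sim \mathcal{K}$, then $\H \sim \G \sim \mathcal{K}$. Now let $\H$ be any Rees algebra with $\H \sim \G$. Since $\G \sim \H$, every $\G$-local sequence is simultaneously a $\G$- and an $\H$-local sequence with $\Sing(\G_i) = \Sing(\H_i)$, so along any such sequence the algebra $\id{\G,\H}$ generated by $\G$ and $\H$ satisfies $\Sing(\id{\G,\H}_i) = \Sing(\G_i)\cap\Sing(\H_i) = \Sing(\G_i)$; hence $\id{\G,\H} \sim \G$ by monotonicity in one direction and this computation in the other. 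Applying the sandwich property to $\G \subset \G[fW^n] \subset \id{\G,\H}$ for each homogeneous generator $fW^n$ of $\H$, we reduce the maximality of $\overline{\Diff(\G)}$ to the following membership criterion: \emph{if $\G[fW^n] \sim \G$, then $fW^n \in \overline{\Diff(\G)}$.}

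This membership criterion is the heart of the matter and the main obstacle; it is precisely the content of Hironaka's theory and of \cite[Theorem~3.10]{Indiana} (see also \cite{Hironaka05}). Its converse is easy, again by the sandwich property: if $fW^n \in \overline{\Diff(\G)}$ then $\G \subset \G[fW^n] \subset \overline{\Diff(\G)} \sim \G$ gives $\G[fW^n] \sim \G$. All the difficulty lies in passing from the dynamical condition ``$\G[fW^n] \sim \G$'' to the algebraic condition ``$fW^n$ is integral over $\Diff(\G)$''. The plan is to translate weak equivalence into an order condition: $\G[fW^n] \sim \G$ means exactly that, along every $\G$-local sequence, the transform of $f$ keeps order at least $n$ at every point of $\Sing(\G_i)$. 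Differential saturation turns these order and persistence conditions (which are stable under differentiation and under permissible blow ups) into the integral-dependence inequalities that cut out $\overline{\Diff(\G)}$. Carrying this translation out rigorously --- controlling orders under arbitrary sequences of permissible blow ups and matching them with integral closure --- is the technical core, and it is exactly what the cited works establish.
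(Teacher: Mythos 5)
The paper does not prove this statement: it is quoted verbatim from \cite{Hironaka05} and \cite[Theorem~3.10]{Indiana}, so there is no internal proof to compare yours against. With that caveat, the formal part of your argument is correct. The ``$\Leftarrow$'' direction does follow from Giraud's Lemma (\ref{Dif_Algebras}) together with the lemma that $\G$ and $\overline{\G}$ are weakly equivalent (the unlabelled lemma after \ref{integral_closure_Rees}), plus transitivity. Your monotonicity and sandwich observations are also sound: $\G \subset \H$ gives $\Sing(\H_i) \subset \Sing(\G_i)$ along any $\H$-local sequence, the join satisfies $\Sing(\id{\G,\H}) = \Sing(\G) \cap \Sing(\H)$ compatibly with transforms, and these facts legitimately reduce ``$\Rightarrow$'' to the single-generator membership criterion: if $\G[fW^n]$ is weakly equivalent to $\G$ then $fW^n \in \overline{\Diff(\G)}$.

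The issue is that this membership criterion is not a peripheral technicality to be discharged by citation while the rest counts as proof --- it \emph{is} the theorem, in essentially equivalent form. Everything you actually prove is soft bookkeeping with trees of transformations; the passage from the dynamical condition (persistence of order $\geq n$ of the transforms of $f$ along every $\G$-local sequence) to the algebraic condition (integral dependence of $fW^n$ over $\Diff(\G)$) is exactly where the content of \cite{Hironaka05} and \cite[Theorem~3.10]{Indiana} lies, and your final paragraph describes what such a translation would have to accomplish without carrying out any step of it. So your proposal should be read as a correct and cleanly organized reduction of the statement to the cited core result --- which is consistent with how the paper itself treats the theorem, namely as a quotation --- but not as an independent proof: no new argument has been supplied for the hard implication.
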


In particular, this theorem says that $\overline{\Diff(\G)}$ is the canonical representative of the class of $\G$. A generalization of this result to the case of varieties over non-perfect fields can be found in \cite[Theorem~6.6.8]{TesisCarlos}.

\begin{parrafo} {\bf Relative Differential Rees algebras.} \label{Relative_Dif_Algebras}
Let $\beta: V\to V'$ be a smooth morphism of smooth schemes defined over a perfect field $k$ with $\dim V > \dim V'$. Then, for any integer $s$, the sheaf of relative differential operators of order at most $s$,   $\Diff_{V/V'}^s
$, is locally free over $V$.  We will say that a sheaf of ${\mathcal O}_V$-Rees algebras ${\mathcal G}=\oplus_nI_nW^n$ is a {\em $\beta$-relative differential Rees algebra} or simply a {\em $\beta$-differential Rees algebra} if there is an affine covering $\{U_i\}$ of $V$, such that for  every homogeneous element $fW^N\in \G$ and every $\Delta\in \Diff_{V/V'}^n(U_i)$ with $n<N$,  we have that $\Delta(f)W^{N-n}\in {\mathcal G}$ (in particular, $I_{i+1}\subset I_i$  since  $ \Diff_{V/V'}^0\subset \Diff_{V/V'}^1$). Given an arbitrary Rees algebra $\G$ over $V$  there is a natural way to construct a $\beta$-relative differential algebra with the property of being the smallest containing $\G$,  and we will denote it by $\Diff_{V/V'}(\G)$   (see \cite[Theorem 2.7]{hpositive}). Relative differential Rees algebras will play a role in the definition  of the so called {\em elimination algebras} that will be treated in Section \ref{elimination1}. 
	
\end{parrafo}

\color{black}

\medskip

\noindent {\bf The Rees algebra associated to $\Max\mult(X)$.}

\begin{theorem}  \label{thm:rest_definida_b} \cite[Theorem~5.3]{Carlos}
Let $X$ be a singular variety defined over a perfect field $k$. Suppose that there are two immersions $X \subset V_1$ and $X \subset V_2$ together with two Rees algebras $\G_1$ and $\G_2$ over $V_1$ and $V_2$ respectively such that $(V_1,\G_1)$ and $(V_2,\G_2)$ represent the stratum of maximum multiplicity of $X$ \textup{(}see Definition~\ref{d_representable}\textup{)}. Then the $\mathcal{O}_X$-algebras $\Diff(\G_1) \rvert_{X}$ and $\Diff(\G_2) \rvert_{X}$ are equal up to integral closure.
\end{theorem}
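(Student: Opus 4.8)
The plan is to compare the two presentations inside a common regular ambient space, and then to restrict the resulting equality to $X$. Since the conclusion is an assertion about $\mathcal{O}_X$-algebras up to integral closure, it is local on $X$, so I would first replace $X$ by a common affine (\'etale) neighbourhood of a point $\xi\in\Max\mult(X)$ on which both presentations of Remark~\ref{rmk:presentasions-etale} are defined, writing $X=\Spec(B)$, $V_1=\Spec(S_1)$, $V_2=\Spec(S_2)$, with surjections $S_1\twoheadrightarrow B$ and $S_2\twoheadrightarrow B$. Set $W=V_1\times_k V_2$; as $k$ is perfect the $V_i$ are smooth, hence $W$ is a smooth (so regular) $k$-variety, with smooth projections $p_1:W\to V_1$ and $p_2:W\to V_2$. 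The two immersions $\iota_1,\iota_2$ induce a closed immersion $\delta:X\hookrightarrow W$ with $p_i\circ\delta=\iota_i$ for $i=1,2$.

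The central construction is to produce on $W$ two Rees algebras $\mathcal{H}_1,\mathcal{H}_2$ that both represent $\Max\mult(X)$ via the \emph{single} immersion $\delta$, each built from the corresponding $\G_i$. The naive pullback $p_i^{*}\G_i$ does not work, since its singular locus is the cylinder $p_i^{-1}(\Max\mult(X))$ rather than $\delta(\Max\mult(X))$. To cut this down I would use that $p_i$ is smooth with $p_i\circ\delta=\iota_i$: \'etale-locally $\delta$ factors through a section of $p_i$, so the pair $(W,\delta)$ is identified with $(V_i\times\A^{c},\,V_i\times\{0\})$ for suitable vertical coordinates $z^{(i)}_1,\dots,z^{(i)}_{c}$ cutting $\delta(X)$ out of the cylinder. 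In other words the second embedding is obtained from the first by \emph{adding variables}. I then set $\mathcal{H}_i$ to be the $\mathcal{O}_W$-Rees algebra generated by $p_i^{*}\G_i$ together with $z^{(i)}_1W,\dots,z^{(i)}_{c}W$. Using that $\Sing$ of the algebra of vertical coordinates is $V(z^{(i)})$ and that adding variables preserves local presentations, one checks $\Sing(\mathcal{H}_i)=\delta(\Max\mult(X))$ and that $(W,\mathcal{H}_i)$ satisfies conditions (1)--(3) of Definition~\ref{d_representable} for $X\hookrightarrow W$.

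Granting this, both $\mathcal{H}_1$ and $\mathcal{H}_2$ represent $\Max\mult(X)$ via the same immersion $\delta$, so they are weakly equivalent on $W$: any $\mathcal{H}_1$-local sequence on $W$ descends, by condition (3) of Definition~\ref{d_representable}, to a $\mult(X)$-local sequence on $X$ with $\Max\mult(X_i)=\Sing((\mathcal{H}_1)_i)$ at each stage; by condition (2) applied to $\mathcal{H}_2$ this same sequence lifts to an $\mathcal{H}_2$-local sequence on $W$ along the very same centers, with $\Sing((\mathcal{H}_2)_i)=\Max\mult(X_i)=\Sing((\mathcal{H}_1)_i)$, and the symmetric argument gives the converse. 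This is exactly Definition~\ref{def_weak_equiv}, so Theorem~\ref{thm:canonical-indiana} yields $\overline{\Diff(\mathcal{H}_1)}=\overline{\Diff(\mathcal{H}_2)}$ as $\mathcal{O}_W$-Rees algebras (note this step is characteristic-free, which is why the theorem holds over any perfect field). It remains to restrict to $X$. On the product $W$ the sheaf of differential operators decomposes, and operators along the $z^{(i)}$-directions annihilate elements pulled back from $V_i$ beyond order zero; a direct computation then gives $\Diff(p_i^{*}\G_i)=p_i^{*}\Diff(\G_i)$, while the vertical part generated by the $z^{(i)}_jW$ is already differentially saturated and restricts trivially to $X=V(z^{(i)},\dots)$. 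Hence $\Diff(\mathcal{H}_i)\rvert_{X}=\Diff(\G_i)\rvert_{X}$ (the composite $S_i\to\mathcal{O}_W\to B$ being the original $S_i\twoheadrightarrow B$). Applying the surjection $\mathcal{O}_W\twoheadrightarrow\mathcal{O}_X$ to the equality $\overline{\Diff(\mathcal{H}_1)}=\overline{\Diff(\mathcal{H}_2)}$ and using that integral dependence is preserved by ring homomorphisms, I conclude that $\Diff(\G_1)\rvert_{X}$ and $\Diff(\G_2)\rvert_{X}$ are each integral over the common algebra $\overline{\Diff(\mathcal{H}_1)}\rvert_{X}$, and therefore have the same integral closure, which is the assertion.

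The \textbf{main obstacle} is the construction of $\mathcal{H}_i$ and the verification that it is a genuine local presentation on $W$: namely the \'etale-local section-extension argument that places the two embeddings in ``adding variables'' position relative to the smooth projections $p_i$, and the attendant check that the singular-locus cylinder $p_i^{-1}(\Max\mult(X))$ is correctly cut down to $\delta(\Max\mult(X))$ while conditions (2)--(3) of Definition~\ref{d_representable} survive. By comparison, the differential computation $\Diff(p_i^{*}\G_i)=p_i^{*}\Diff(\G_i)$ and the final integral-closure bookkeeping are routine.
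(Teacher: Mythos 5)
The paper itself offers no proof of Theorem~\ref{thm:rest_definida_b}: it is imported verbatim from \cite[Theorem~5.3]{Carlos}, so there is no in-text argument to measure yours against. Judged on its own, your proposal follows the standard Hironaka-style comparison and its logical skeleton is sound: place both presentations in $W=V_1\times_k V_2$; turn each into a presentation of $\Max\mult(X)$ via the single immersion $\delta$ by adjoining, in weight one, the vertical coordinates of a locally constructed section of $p_i$ through $\delta(X)$ (this section-extension step is even Zariski-local: lift generators of $\mathcal{O}_{V_2}$ from $\mathcal{O}_X$ to $\mathcal{O}_{V_1}$ and take the graph); observe that two Rees algebras representing the same stratum via the same immersion are weakly equivalent; invoke Theorem~\ref{thm:canonical-indiana} to get $\overline{\Diff(\mathcal{H}_1)}=\overline{\Diff(\mathcal{H}_2)}$; and restrict. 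The final bookkeeping is also right: $\overline{\Diff(\mathcal{H}_i)}$ is integral over $\Diff(\mathcal{H}_i)$, integral dependence passes to quotients, so $\Diff(\G_i)\rvert_X$ and $\overline{\Diff(\mathcal{H}_i)}\rvert_X$ share an integral closure, and the two common algebras coincide.

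Two caveats. First, you have correctly isolated, but not supplied, the real content: the verification that the algebra $\mathcal{H}_i$ generated by $p_i^{*}\G_i$ and $z^{(i)}_1W,\dots,z^{(i)}_cW$ satisfies conditions (2) and (3) of Definition~\ref{d_representable} for $X\hookrightarrow W$, i.e.\ that the representation property survives blow-ups --- one must track that the singular locus of the transform stays inside the strict transform of $V(z^{(i)}_1,\dots,z^{(i)}_c)$ and that the transform of $\mathcal{H}_i$ restricts there to the transform of $\G_i$. This ``adding variables'' compatibility is genuinely where the theorem lives; it is established in the literature this paper leans on (\cite{multiplicidad}, \cite{notas_Austria}, \cite{EV}), so invoking it is legitimate, but a self-contained proof would have to include it. Second, your restriction identity $\Diff(\mathcal{H}_i)\rvert_X=\Diff(\G_i)\rvert_X$ does hold --- the Leibniz computation shows that after setting $z^{(i)}=0$ only operators whose $\partial_{z}$-part exactly matches the monomial $z^\alpha$ contribute, returning $\Diff^{a-n}(I_a)\rvert_X$ in degree $n$ --- but for the conclusion you only need it up to integral closure, since weak equivalence only identifies the $\mathcal{H}_i$ after applying $\overline{\Diff(\cdot)}$; your statement of the final step is consistent with this. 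In short: no error, one deferred (but citable) lemma.
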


\begin{remark}  
Recall that a situation as in the previous theorem can always be achieved locally in \'etale topology (see \ref{locrep}). Moreover, it can be shown that if $X\leftarrow \widetilde{X}_1$ and $X\leftarrow \widetilde{X}_2$ are two \'etale neighborhoods together with embedings  $\widetilde{X}_1\subset V_1$ and  $\widetilde{X}_2\subset V_2$   where two Rees algebras attached to the maximum  multiplicity locus are defined,  then there is an \'etale (common) neighborhood $\widetilde{X}$ of $X, \widetilde{X}_1,$ and  $\widetilde{X}_2$, and two embeddings, $\widetilde{X}\subset \widetilde{V}_1$ and $\widetilde{X}\subset \widetilde{V}_2$ together with two Rees algebras,  say $\G_1$ and $\G_2$ defined on $\widetilde{V}_1$ and  $\widetilde{V}_2$ respectively,   representing the maximum multiplicity of $\widetilde{X}$. This is the sense  in which the previous theorem should be interpreted (see \cite[\S 29]{notas_Austria}). 
\end{remark}

\begin{definition} \label{def:algebra_restringida_b}
Let $X$ be a singular variety over a perfect field $k$ with maximum multiplicity $s > 1$. Given a local presentation of $\Max\mult(X)$ as in Definition~\ref{d_representable}, say $(V,\G)$, we define the \textit{Rees algebra attached to $\Max\mult(X)$} to be $\G_X := \overline{\Diff(\G) \rvert_X}$.
\end{definition}

\begin{remark} \label{rmk:algebra_restringida_b}
Given a singular variety $X$ over a perfect field $k$, local presentations of $\Max\mult(X)$ exist only in \'etale topology (see \ref{locrep} and  \ref{rmk:presentasions-etale}). Thus $\G_X$ is only defined locally in \'etale topology. Theorem~\ref{thm:rest_definida_b} says that $\G_X$ does not depend on the choice of the local presentation $(V,\G)$.
\end{remark}


\section{Elimination algebras} \label{elimination1}

In the previous section we discussed about local presentations of the multiplicity for a given variety $X$ via some pair $(V,\G)$. This has been useful to prove algorithmic resolution of singularities in characteristic zero using the multiplicity as main invariant. 
In this context, one  advantage  of using the multiplicity instead of the Hilbert-Samuel function (the invariant used by Hironaka in \cite{Hironaka64}) is that, at least when the characteristic is zero, it can be shown that the lowering of the maximum multiplicity of $X$ is equivalent to resolving a Rees algebra defined in some $d$-dimensional regular scheme, where $d$ is the dimension of $X$.
This follows from the fact that the local presentation of the multiplicity is found by considering a suitable finite projection from $X$ to some regular scheme $Z$ (see \ref{Main_Ideas_Mult} and Example~\ref{ex:representation-in-dim-d} below).

The previous discussion leads to the following question. Once a pair $(V, \G)$ is given, one may wonder whether there is another $(V', \G')$ with $\dim V'< \dim V$ somehow {\em equivalent} to $(V, \G)$. That is, we would like, (1) that $\Sing (\G)$ be homeomorphic to $\Sing (\G')$ in some sense, and (2), that this homeomorphism be preserved by local sequences. This would mean that finding a resolution of  $\G$ is {\em  equivalent} to finding a resolution of $\G'$, the latter being {\em less complex} since the problem concerns an ambient space of lower dimension. This would allow us to {\em resolve} Rees algebras by using an inductive argument. This is the motivation behind  the theory of {\em elimination algebras}  treated in this section (further details about elimination algebras can be found in \cite{BrV}, \cite{notas_Austria}, \cite{positive}, and \cite{hpositive}).  

 To define an elimination algebra for a given one, the starting point will be a pair $(V, \G)$ as before, and some smooth morphism  to a (smooth) scheme of lower dimension $\beta: V\to V'$.  To be able to talk about elimination we will require some additional conditions on $\beta$ (which will be made precise in  Definition \ref{def_admisible} below).  Now, as we will see, the dificulty here is that if we are given a $\G$-permissible sequence over $V$ we would   like to be able to define (in a natural way) another over $V'$ together with smooth morphisms $\beta_i$  and commutative diagrams: 
 $$\xymatrix@R=15pt{V \ar[d]_{\beta} & V_1\ar[l] \ar[d]_{\beta_1} & \ar[l] \ldots & \ar[l] V_l \ar[d]_{\beta_l} \\
 	V' & V'_1\ar[l]   & \ar[l] \ldots & \ar[l] V_l'.
 }$$
 
However this requires a little bit of care when the permissible transformations involve permissible blow ups. This motivates the following discussion which we hope will help to clarify the meaning of  Definition \ref{def_admisible}.

\begin{parrafo}{\bf On the compatibility of permissible blow ups  with smooth projections.} \label{repetido} 
Let $\beta: V \to V'$ be a smooth morphism of regular varieties over a perfect field $k$ with $\dim V \geq \dim V'$. Let $\G \subset {\mathcal O}_{V}[W]$ and $\G' \subset {\mathcal O}_{V'}[W]$ be Rees algebras. Suppose that $\Sing ({\G})$ is homeomorphic to $\Sing (\G')$ via $\beta$, and let $Y \subset \Sing \G$ be a permissible center.  If $Z'=\beta(Y) \subset \Sing (\G')$ is also a permissible center, then we can consider the blows up of $V$ and $V'$ at $Y$ and $Z=\beta(Y)$ respectively, say 
\[ \xymatrix@R=15pt{
	V \ar[d]_{\beta} & V_1\ar[l] \\
	V' & V'_1.\ar[l]  
}\]%
Let $\G'_1$ be the transform of $\G'$  in $V'_1$ and let $\G_1$ be the transform of $\G$ in $V_1$. In general, $\beta$ cannot be lifted to a morphism $\beta_1: V_1 \to V'_1$  to complete the square. However for our purposes it would be enough if we could guarantee that $\beta$ can be lifted to a smooth morphims from some open subset $U_1\subset V_1$ with $\Sing (\G_1)\subset U_1$,  
\begin{equation} \label{diagrama_abierto}
\xymatrix@R=15pt{V \ar[d]_{\beta} & U_1\ar[l]  \ar[d]_{\beta_1} \\
	V' & V'_1. \ar[l]   
}
\end{equation}%
In such case $\beta_1(\Sing (\G_1))$ can be defined  and  we may wonder  whether it is homeomorphic to $\Sing (\G'_1)$ or not.  The situation is quite similar if instead we consider a permissible center $Z \subset \Sing (\G')$ and consider the blow ups of $V'$ at $Z$ and of $V$ at $\beta^{-1}(Z)\cap \Sing (\G)$. This motivates the formulation of  part (2) of the following definition. As we will see, for  a   smooth morphism $\beta: V\to V'$ to be  {\em $\G$-admissible} (see Definition \ref{def_admisible}),   we will  only ask for  the existence of a commutative diagram like (\ref{diagrama_abierto}) after a $\G$-permissible blow up  (see Remark \ref{demo_abiertos}). 
	
Thus, in forthcoming discussions whenever we define local sequences over $V'$ and $V$ as above we will be assuming that the blow ups at permissible centers  will be restricted to suitable open subsets of the transforms of $V$ that contain the singular locus of the transforms of $\G$ so that commutative diagrams as (\ref{diagrama_abierto}) can be considered. 
\end{parrafo}

\begin{definition} \label{def_admisible}
Let $V^{(n)}$ an $n$-dimensional smooth variety over a perfect field $k$, and let $\G^{(n)}$ be a Rees algebra over $V^{(n)}$. We will say that a smooth morphism from $V^{(n)}$ to a smooth $k$-variety $V^{(n-e)}$ of dimension $(n-e)$, say $\beta : V^{(n)} \to V^{(n-e)}$, is a \emph{$\G^{(n)}$-admissible projection} if the following conditions hold:
\begin{enumerate}
\item $\beta$ maps $\Sing (\G^{(n)})$ homeomorphically to its image in $V^{(n-e)}$. Moreover, it is required that, for any closed subscheme $Y \subset \Sing (\G^{(n)})$, $Y$ is regular if and only if $\beta(Y) \subset V^{(n-e)}$ is so;

\item For any $\G^{(n)}$-permissible sequence of transformations on $V^{(n)}$, say
\begin{equation} \label{eq:def-Gn-projection:seq-n}
\xymatrix @R=-.5ex {
	\G^{(n)} = \G^{(n)}_0 &
		\G^{(n)}_1 &
		&
		\G^{(n)}_l \\
	V^{(n)} = V^{(n)}_0 &
		V^{(n)}_1 \ar[l]_-{\varphi_1} &
		\cdots \ar[l]_-{\varphi_2} &
		V^{(n)}_l , \ar[l]_-{\varphi_l}
} \end{equation}%
there exists a collection of open subschemes, say $U^{(n)}_i \subset V^{(n)}_i$ for $i = 1, \dotsc, l$, together with a sequence of transformations over $V^{(n-e)}$, say
\begin{equation}  \label{eq:def-Gn-projection:seq-n-e}
\xymatrix @R=0pt {
	V^{(n-e)} = V^{(n-e)}_0 &
		V^{(n-e)}_1  \ar[l] &
		\cdots \ar[l] &
		V^{(n-e)}_l , \ar[l]
} \end{equation}%
and a commutative diagram, say
\begin{equation} \label{diag:def-Gn-projection}
\xymatrix@R=10pt {
	V^{(n)} \ar[ddd]^-{\beta} &
		V^{(n)}_1  \ar[l]_-{\varphi_1} &
		V^{(n)}_2  \ar[l]_-{\varphi_2} &
		\cdots \ar[l]_-{\varphi_3} &
		V^{(n)}_l \ar[l]_-{\varphi_l} \\
	&
		U^{(n)}_1  \ar[lu] \ar[dd]^-{\beta_1} \ar@{^(->}[u] &
		U^{(n)}_2  \ar[l]  \ar[dd]^-{\beta_2} \ar@{^(->}[u] &
		\cdots \ar[l] &
		U^{(n)}_l \ar[l] \ar[dd]^-{\beta_l} \ar@{^(->}[u] \\
	\\
	V^{(n-e)}  &
		V^{(n-e)}_1  \ar[l] &
		V^{(n-e)}_2  \ar[l] &
		\cdots \ar[l] &
		V^{(n-e)}_l, \ar[l]
} \end{equation}%
with the following properties:
\begin{enumerate}[i\textup{)}]
\item $\varphi_i\bigl(U^{(n)}_i\bigr) \subset U^{(n)}_{i-1}$, in such a way that $U^{(n)}_{i-1} \leftarrow U^{(n)}_i$ is the natural restriction of $\varphi_i$ to $U^{(n)}_i$ for every $i=2, \dotsc, l$;
\item $\Sing\bigl(\G^{(n)}_i\bigr) \subset U^{(n)}_i$ for every $i = 1, \dotsc, l$;
\item Each $\beta_i : U^{(n)}_i \to V^{(n-e)}_i$ is a smooth morphism mapping $\Sing \bigl( \G^{(n)}_i \bigr)$ homeomorphically to its image in $V^{(n-e)}_i$. Moreover, it is required that, for any closed subscheme $Y_i \subset \Sing \G^{(n)}_i$, $Y_i$ is regular if and only if $\beta_i(Y_i) \subset V^{(n-e)}_i$ is so;
\item The sequence in \eqref{eq:def-Gn-projection:seq-n-e} is the sequence naturally induced by \eqref{eq:def-Gn-projection:seq-n} in the following sense: for each $i=1, \dotsc, l$, if $V^{(n)}_i$ is the blow up of $V^{(n)}_{i-1}$ at a closed center $Y_{i-1} \subset \Sing \bigl( \G^{(n)}_{i-1 }\bigr)$, then $V^{(n-e)}_i$ is the blow up of $V^{(n-e)}_{i-1}$ at $\beta_{i-1}(Y_{i-1})$; if $V^{(n)}_i$ is an open restriction or the multiplication by an affine line of  $V^{(n)}_{i-1}$, then so is $V^{(n-e)}_i$ with respect to $V^{(n-e)}_{i-1}$.
\end{enumerate}%
\end{enumerate}%
\end{definition}%

Given $V^{(n)}$ and $\G^{(n)}$, the question of whether there exists a $\G^{(n)}$-admissible projection to some smooth space of dimension $(n-e)$ arises. The existence of such admissible projections can be characterized in terms of the $\tau$-invariant, an invariant attached to each  closed  point $\xi \in \Sing \G^{(n)}$.

\begin{definition}
Let $V^{(n)}$ be a regular variety over a perfect field $k$ and let $\G^{(n)}$ be a Rees algebra over $V^{(n)}$. Fix a closed point $\xi \in \Sing (\G^{(n)})$, and let $\text{Gr}_{m_{\xi}}({\mathcal O}_{V^{(n)},\xi})$ denote the graded ring of $\mathcal{O}_{V^{(n)},\xi}$. Consider the tangent ideal of $\G^{(n)}$ at $\xi$, say $\In_{\xi} \G^{(n)} \subset \text{Gr}_{m_{\xi}}({\mathcal O}_{V^{(n)},\xi})$, defined as the homogeneous ideal generated by $\In_{\xi}(I_n) := \bigl(I_n+m^{n+1}_{\xi}\bigr) / m^{n+1}_{\xi}$
for all $n \geq 1$. Note that $\text{Gr}_{m_{\xi}}({\mathcal O}_{V^{(n)},\xi}) \simeq k'[Z_1,\ldots,Z_n]$, where $Z_1, \dotsc, Z_n$ is a basis of the subspace of linear forms of $\text{Gr}_{m_{\xi}}({\mathcal O}_{V^{(n)},\xi})$, say $\text{Gr}^1_{m_{\xi}}({\mathcal O}_{V^{(n)},\xi})$,   and $k'$ is the residue field at $\xi$. The \emph{$\tau$-invariant} is the minimum integer so that there exists a basis of ${Gr}^1_{m_{\xi}}({\mathcal O}_{V^{(n)},\xi})$, say $Y_1, \dotsc, Y_n$, satisfying that $\In_{\xi} \G^{(n)}$ can be generated by polynomials in $k'[Y_1, \dotsc, Y_\tau]$ (see \cite{B}). That is, so that
\[
	\In_{\xi} \G^{(n)}
	= \left ( \In_{\xi} \G^{(n)} \cap k'[Y_1, \dotsc, Y_\tau] \right ) \cdot \text{Gr}_{m_{\xi}}({\mathcal O}_{V^{(n)},\xi}).
\]%
\end{definition}

The $\tau$-invariant appears in \cite{Hironaka64}, and has been thoroughly  studied in \cite{Oda1973}, \cite{Oda1983} and \cite{Oda1987}. See also \cite{Hironaka70},  \cite{kaw} and \cite{B}.

\begin{lemma}[{cf. \cite[\S 4]{hpositive},     \cite[\S8]{BrV}}]\label{existencia}
Let $V^{(n)}$ be a smooth variety of dimension $n$ over a perfect field $k$, and let $\G^{(n)}$ be a Rees algebra over $V^{(n)}$. Let $\xi \in \Sing (\G^{(n)})$   be a closed point. Then, for $e \leq \tau_{\G^{(n)}, \xi}$, one can  construct a $\G^{(n)}$-admissible projection (locally in \'etale topology) to some smooth $k$-variety of dimension $(n-e)$,  $\beta : V^{(n)} \to V^{(n-e)}$.
\end{lemma}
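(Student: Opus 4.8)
The plan is to reduce to the differentially saturated case and then produce the projection by \emph{eliminating} a suitable subset of the coordinates spanning the $\tau$-directions. First I would replace $\G^{(n)}$ by its differential saturation $\Diff(\G^{(n)})$. By the discussion in \ref{Dif_Algebras}, $\Diff(\G^{(n)})$ is weakly equivalent to $\G^{(n)}$; in particular it has the same singular locus, the same tree of permissible transformations, and the same $\tau$-invariant at $\xi$, so it suffices to build an admissible projection for $\Diff(\G^{(n)})$. The essential point gained by this reduction is that for a differentially saturated Rees algebra the tangent ideal $\In_\xi \Diff(\G^{(n)})$ is generated by linear forms: applying order-one differential operators to a homogeneous generator lowers its weight by one while keeping the element in the algebra, so one may extract from the weight-one part elements whose initial forms are linearly independent linear forms. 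Since $\tau = \tau_{\G^{(n)},\xi} \geq e$, I can therefore find $x_1 W, \dotsc, x_e W \in \Diff(\G^{(n)})$ with $x_i \in \mathcal{O}_{V^{(n)},\xi}$ and $\nu_\xi(x_i) = 1$ whose initial forms $\In_\xi(x_i)$ are part of a basis of $\Gr^1_{m_\xi}(\mathcal{O}_{V^{(n)},\xi})$.

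Next I would use these elements to build the projection. Because the $\In_\xi(x_i)$ are linearly independent linear forms, $x_1, \dotsc, x_e$ extend to a regular system of parameters at $\xi$, so after passing to an \'etale neighborhood---which is exactly the local-in-\'etale-topology freedom allowed in the statement---I may assume that $dx_1, \dotsc, dx_e$ form part of a global basis of $\Omega_{V^{(n)}/k}$. This data determines a smooth morphism $\beta : V^{(n)} \to V^{(n-e)}$ of relative dimension $e$ whose vertical directions are exactly the $x_i$. Condition (1) of Definition~\ref{def_admisible} is then checked as follows: since each $x_i W$ has weight one and lies in $\Diff(\G^{(n)})$, every point of $\Sing(\Diff(\G^{(n)})) = \Sing(\G^{(n)})$ satisfies $\nu(x_i) \geq 1$, so $x_i$ vanishes on $\Sing(\G^{(n)})$ and hence $\Sing(\G^{(n)}) \subset V(x_1, \dotsc, x_e)$ on a neighborhood of $\xi$. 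As $\beta$ restricts to an isomorphism on $V(x_1, \dotsc, x_e)$, its restriction to the closed subset $\Sing(\G^{(n)})$ is a homeomorphism onto its image, and the same isomorphism shows that a closed $Y \subset \Sing(\G^{(n)})$ is regular if and only if $\beta(Y)$ is.

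Finally, condition (2) requires that the projection be compatible with permissible local sequences. Given a permissible center $Y \subset \Sing(\G^{(n)})$, I would blow up $V^{(n)}$ at $Y$ and $V^{(n-e)}$ at $\beta(Y)$ (a regular center, by (1)); as explained in \ref{repetido}, $\beta$ need not lift globally, but I expect it to lift to a smooth morphism $\beta_1$ on an open set $U^{(n)}_1 \supset \Sing(\G^{(n)}_1)$. The mechanism is that the weight-one elements $x_1 W, \dotsc, x_e W$ have controlled transforms under the blow up, and the differential saturation of the transform of $\Diff(\G^{(n)})$ again contains weight-one elements cutting out a set containing $\Sing(\G^{(n)}_1)$; iterating this and carrying along the restrictions $U^{(n)}_i$ produces the commutative diagram \eqref{diag:def-Gn-projection} with each $\beta_i$ again admissible. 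The natural framework for this bookkeeping is that of the relative differential Rees algebras $\Diff_{V/V'}$ from \ref{Relative_Dif_Algebras}: one shows that $\beta$ is compatible with $\beta$-relative differential saturation and that this property propagates through permissible transformations. I expect this verification of condition (2)---ensuring the lift exists on an open set containing the singular locus at every stage, and that admissibility (the homeomorphism together with the regularity equivalence) is preserved throughout an \emph{arbitrary} sequence---to be the main obstacle, and it is precisely the content worked out in detail in \cite[\S4]{hpositive} and \cite[\S8]{BrV}.
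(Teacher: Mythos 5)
The paper gives no proof of Lemma~\ref{existencia}; it is imported from \cite[\S 4]{hpositive} and \cite[\S8]{BrV}, so your attempt has to be measured against what those references actually do. Your argument is the characteristic-zero argument, and it breaks at its first substantive step in positive characteristic. It is not true over a perfect field of characteristic $p>0$ that the tangent ideal of a differentially saturated Rees algebra is generated by linear forms, nor that its weight-one part contains $\tau$ elements of order one. The paper's own Example~\ref{ex:elim-not-strongly} is a counterexample: for $\G^{(2)}=\mathcal{O}_{V^{(2)}}[x^2W,(y^2-x^3)W^2]$ in characteristic $2$, the weight-one part of $\Diff(\G^{(2)})$ is generated by $x^2$ (since $\partial_y(y^2-x^3)=0$ and $\partial_x(y^2-x^3)=x^2$), which has order $2$ at the origin, so there is no element $x_1W\in\Diff(\G^{(2)})$ with $\nu_\xi(x_1)=1$; yet $\In_\xi\Diff(\G^{(2)})=\langle y^2\rangle$, so $\tau_{\G^{(2)},\xi}=1$, and the lemma does produce an admissible projection of relative dimension $1$ (the one eliminating $y$, used in that example). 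Your candidate projection cannot even be written down here, and your verification of condition (1) of Definition~\ref{def_admisible} --- namely $\Sing(\G^{(n)})\subset V(x_1,\dotsc,x_e)$ with $\beta$ an isomorphism on that section --- is unavailable, because the linear forms that cut out the relevant subspace need not lift to weight-one elements of the algebra.

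What the references do instead is: after differential saturation, the zero locus of the tangent ideal, $\mathcal{C}_\xi=V\bigl(\In_\xi\Diff(\G^{(n)})\bigr)$, is a \emph{linear} subspace of codimension exactly $\tau_{\G^{(n)},\xi}$ (this is where perfectness of $k$ and the results of \cite{B} enter, and it is the precise role of $\tau$); one then chooses, \'etale locally, a smooth $\beta$ of relative dimension $e\leq\tau$ whose kernel at $\xi$ meets $\mathcal{C}_\xi$ only at the vertex. The homeomorphism in condition (1) and its persistence under permissible sequences in condition (2) are then established through monic (Weierstrass-type) presentations and the elimination algebra, not through a section on which $\beta$ restricts to an isomorphism. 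In characteristic zero your construction is essentially the standard one and is sound, and deferring the stability under local sequences to the cited sources is acceptable for a ``cf.''\ lemma; but as written the proof does not cover positive characteristic, which is the case this paper needs.
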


\begin{remark} \label{demo_abiertos} If $\beta: V^{(n)}\to V^{(n-e)}$ is a $\G^{(n)}$-admissible morphism (e.g., as the ones constructed in \cite{hpositive} or \cite{BrV}) and if $Y\subset \Sing (\G^{(n)})$ is a permissible center, then it can be shown that there is a well defined open subset $U_1^{(n)}\subset V_1^{(n)} $ of the blow up of $V^{(n)}$ at $Y$ so that the following diagram of smooth projections and blow ups at $Y$ and at  $Z:=\beta(Y)$ conmmutes: 
\begin{equation} \label{diagrama_abierto_bis}
\xymatrix@R=15pt{
	V^{(n)} \ar[d]_{\beta} & U_1^{(n)}\ar[l]  \ar[d]_{\beta_1} \\
	V^{(n-e)} & V^{(n-e)}_1. \ar[l]   
} \end{equation} 
In fact, the open $U_1^{(n)}$ can be defined as follows. Set $\widetilde{Z}=\beta^{-1}(Z)$ and let $\widetilde{Z}_1$ be the strict transform of $\widetilde{Z}$ in $V_1$. Define $U_1^{(n)}:=V_1^{(n)}\setminus \widetilde{Z}_1$. Then it can be checked that $\Sing (\G_1^{(n)}) \subset U_1^{(n)}$ and that $\beta$ can be lifted to a smooth morphism from $U_1^{(n)}$ (see \cite[\S 6]{hpositive}, \cite[Theorem 9.1]{BrV}).   Just to ease the notation, in what follows,  whenever we consider a $\G^{(n)}$-admissible morphism $\beta: V^{(n)}\to V^{(n-e)}$  and the blow up at a permissible center as above, instead of writing a commutative diagram like (\ref{diagrama_abierto_bis}), we will write  
\begin{equation} \label{diagrama_simple}
\xymatrix@R=15pt{
	V^{(n)} \ar[d]_{\beta} & V_1^{(n)}\ar[l]  \ar[d]_{\beta_1} \\
	V^{(n-e)} & V^{(n-e)}_1, \ar[l]   
} \end{equation} 
understanding that $\beta_1$ may only be defined in an open subset of $V_1^{(n)}$. 
\end{remark}

Roughly speaking, Lemma~\ref{existencia} says that, under some assumptions,   the tree of closed sets defined by $\G^{(n)}$ can be  interpreted as a tree of closed subsets  in some smooth scheme of  lower dimension.  The next question is whether one can find a Rees algebra over $V^{(n-e)}$ which represents this tree of closed sets.

\begin{definition}  \label{def:strongly-linked}
Let $\G^{(n)}$ be a Rees algebra over a smooth variety $V^{(n)}$, and let $\beta : V^{(n)} \to V^{(n-e)}$ be a $\G^{(n)}$-admissible projection. We will say that a Rees algebra $\G^{(n-e)}$ over $V^{(n-e)}$ is \emph{strongly linked} to $\G^{(n)}$ via $\beta$ if the following conditions hold:
\begin{enumerate}
\item $\Sing (\G^{(n-e)}) = \beta \bigl( \Sing (\G^{(n)}) \bigr)$ (note that, since $\beta$ is required to be $\G^{(n)}$-admissible, this implies that a closed center $Y \subset \Sing (\G^{(n)})$ is regular if and only if the corresponding closed subscheme, say $\beta(Y) \subset \Sing (\G^{(n-e)})$, is so);
\item Any $\G^{(n)}$-permissible sequence on $V^{(n)}$, say
\begin{equation} \label{eq:def-strongly-linked:seq-n}
\xymatrix @R=-.5ex {
	\G^{(n)} = \G^{(n)}_0 &
		\G^{(n)}_1 &
		&
		\G^{(n)}_l \\
	V^{(n)} = V^{(n)}_0 &
		V^{(n)}_1 \ar[l] &
		\cdots \ar[l] &
		V^{(n)}_l , \ar[l]
} \end{equation}%
induces a $\G^{(n-e)}$-permissible sequence on $V^{(n-e)}$, say
\begin{equation} \label{eq:def-strongly-linked:seq-n-e}
\xymatrix @R=-.5ex {
	\G^{(n-e)} = \G^{(n-e)}_0 &
		\G^{(n-e)}_1 &
		&
		\G^{(n-e)}_l \\
	V^{(n-e)} = V^{(n-e)}_0 &
		V^{(n-e)}_1 \ar[l] &
		\cdots \ar[l] &
		V^{(n-e)}_l , \ar[l]
} \end{equation}%
and a commutative diagram as \eqref{diag:def-Gn-projection} in Definition~\ref{def_admisible} in such a way that $\beta_i$ is a $\G^{(n)}_i$-admissible projection and
\[
	\Sing  (\G^{(n-e)}_i)
	= \beta_i \left( \Sing (\G^{(n)}_i) \right)
\]%
for every $i = 1, \dotsc, l$ (recall that, by condition (2)~ii) of Definition~\ref{def_admisible}, one has that $\Sing (\G^{(n)}_i) \subset U^{(n)}_i$ for all $i$).
\end{enumerate}%
\end{definition}

\begin{remark} \label{res_equiv}
Let $\G^{(n-e)}$ be a Rees algebra which is strongly linked to $\G^{(n)}$ as in the previous definition via the morphism $\beta : V^{(n)} \to V^{(n-e)}$. Then any closed regular center $Z \subset \Sing (\G^{(n-e)})$ induces a closed regular center $Y \subset \Sing (\G^{(n)})$. Thus, any $\G^{(n-e)}$-permissible sequence of transformations like \eqref{eq:def-strongly-linked:seq-n-e} induces a $\G^{(n)}$-permissible sequence like \eqref{eq:def-strongly-linked:seq-n} and a commutative diagram like \eqref{diag:def-Gn-projection}. In particular  $\Sing (\G^{(n-e)}_l) = \emptyset$ if and only if $\Sing (\G^{(n)}_l) = \emptyset$. Thus it turns out that, if $\G^{(n-e)}$ is strongly linked to $\G^{(n)}$, then a resolution of $\G^{(n)}$ induces a resolution $\G^{(n-e)}$, and vice versa.
\end{remark}

\begingroup

In general, given a Rees algebra $\G^{(n)}$ over a smooth variety $V^{(n)}$ and a $\G^{(n)}$-admissible projection $\beta : V^{(n)} \to V^{(n-e)}$, it may occur that one cannot find algebras over $V^{(n-e)}$ which are strongly linked to $\G^{(n)}$. This leads us to introduce a weaker notion:

 \begin{definition}  \cite[1.25, Definitions 1.42, 4.10, Theorem 4.11]{hpositive} \label{def:an-elimination-algebra}
	Let $V^{(n)}$ be an $n$-dimensional smooth variety over a perfect field $k$, and let $\G^{(n)}$ be a Rees algebra over $V^{(n)}$. Consider a $\G^{(n)}$-admissible projection to a smooth variety $V^{(n-e)}$ of dimension $(n-e)$, say $\beta : V^{(n)} \to V^{(n-e)}$ and assume that    $\G^{(n)}$ is a $\beta$-differential Rees algebra.  Then the ${\mathcal O}_{V^{(n-e)}}$-Rees algebra algebra
	\begin{equation}  \label{eq:def-elimination-algebra}
	\G^{(n-e)}
	:= \G^{(n)}
	\cap \mathcal{O}_{V^{(n-e)}} [W] ,
	\end{equation}%
	or any other Rees algebra over $V^{(n-e)}$ with the same integral closure   is called \emph{an elimination algebra} of $\G^{(n)}$ via $\beta$.
\end{definition}

\begin{remark}\label{elimination_abs}
Note that, if an algebra $\G^{(n)}$ is (absolute) differential, then it is relative differential for any $\G^{(n)}$-admissible projection $\beta : V^{(n)} \to V^{(n-e)}$. Hence, in this case,
\begin{equation}
\label{elimination_diff}
\G^{(n-e)}
= \G^{(n)}
\cap \mathcal{O}_{V^{(n-e)}} [W]
\end{equation}
is also  an elimination algebra of $\G^{(n)}$. However, it may occur that the elimination algebra defined in  (\ref{elimination_diff}) is not integral over the one defined in \ref{def:an-elimination-algebra}. In other words,  given a Rees algebra $\G^{(n)}$ in $V^{(n)}$ and a $\G^{(n)}$-admissible projection, $\beta : V^{(n)} \to V^{(n-e)}$, in general the ${\mathcal O}_{V^{(n-e)}}$-Rees algebras; 
$$\Diff(\G^{(n)})\cap \mathcal{O}_{V^{(n-e)}} [W] \ \ \text{ and } \Diff_{V/V'}(\G^{(n)})\cap \mathcal{O}_{V^{(n-e)}} [W]$$
do not share the same integral closure in $ \mathcal{O}_{V^{(n-e)}} [W]$, and, in general, they  may  not  be weakly equivalent either. 
\end{remark}

\begin{remark}
Given a Rees algebra $\G^{(n)}$, and a $\G^{(n)}$-admissible projection $\beta : V^{(n)} \to V^{(n-e)}$, for geometrical reasons we will be interested in the case in which $\beta(\Sing(\G^{(n)}))\subsetneq  V^{(n-e)}$. In fact, if  $\beta(\Sing(\G^{(n)}))=V^{(n-e)}$  then it can be checked that a resolution of $\G^{(n)}$ can be achieved in a trivial way (see \cite[Lemma 13.2]{BrV}). However, in this paper,  because of the hypotheses under we will be considering the admissible projections and the elimination algebras,  we will always  be in the case in which  $\beta(\Sing(\G^{(n)}))\subsetneq  V^{(n-e)}$.   
\end{remark}

\begin{parrafo}\label{imagen_sing}{\bf Elimination algebras and singular loci of Rees algebras.} \cite[Theorem 2.9]{VillaCordoba}, \cite[Corollary 4.12]{hpositive}
Let $V^{(n)}$ be an $n$-dimensional smooth variety over a perfect field $k$, and let $\G^{(n)}$ be a Rees algebra over $V^{(n)}$. Consider a $\G^{(n)}$-admissible projection to a smooth variety $V^{(n-e)}$ of dimension $(n-e)$, say $\beta : V^{(n)} \to V^{(n-e)}$, and suppose that $\G^{(n)}$ is a $\beta$-differential Rees algebra. Define an elimination algebra  $\G^{(n-e)}$ as in (\ref{eq:def-elimination-algebra}). Then: 
\begin{itemize}
	\item[(i)] If the characteristic of the base field $k$ is zero,   $\beta(\Sing(\G^{(n)}))=\Sing(\G^{(n-e)})$; 
	\item[(ii)] If the characteristic of the base field $k$ is positive,  there is a containment $\beta(\Sing(\G^{(n)}))\subseteq\Sing(\G^{(n-e)})$ which may be strict; on the other hand,  if   $\G^{(n)}$ is a differential Rees algebra,  then it can be shown that $\beta(\Sing(\G^{(n)}))=\Sing(\G^{(n-e)})$. However, in general, this equality is not stable after considering local sequences; this issue will be discussed in the following paragraphs. 
	\end{itemize} 
\end{parrafo}

\begin{parrafo}
	\label{eliminacion_blow_ups} {\bf Elimination algebras and permissible transformations.} \cite[Lemma 1.7]{VillaCordoba}, \cite[\S 6]{hpositive}, \cite[\S8, Theorem 9.1]{BrV} Let $V^{(n)}$ be an $n$-dimensional smooth variety over a perfect field $k$, and let $\G^{(n)}$ be a Rees algebra over $V^{(n)}$. Consider a $\G^{(n)}$-admissible projection to a smooth variety $V^{(n-e)}$ of dimension $(n-e)$, say $\beta : V^{(n)} \to V^{(n-e)}$, and suppose that $\G^{(n)}$ is a $\beta$-differential Rees algebra. Define an elimination algebra  $\G^{(n-e)}$ as in (\ref{eq:def-elimination-algebra}). Let $Y\subset \Sing(\G^{(n)})$ be a permissible center. Then it can be shown that $\beta(Y)\subset \Sing(\G^{(n-e)})$ is a $\G^{(n-e)}$-permissible center, and  there is a commutative diagram of blow ups, smooth morphisms and transforms of Rees algebras   
\begin{equation} \label{diagrama_simple_2}
\xymatrix@R=15pt{
\G^{(n)}  & 	V^{(n)} \ar[d]_{\beta} & V_1^{(n)}\ar[l]  \ar[d]_{\beta_1} & \G^{(n)}_1 \\
\G^{(n-e)}  & 	V^{(n-e)} & V^{(n-e)}_1 \ar[l] &  \G^{(n-e)}_1 
} \end{equation} 
as in (\ref{diagrama_simple})  (see Remark \ref{demo_abiertos}),  and:  
\begin{itemize}
	\item[(i)] The morphism $\beta_1:  V_1^{(n)}\to  V_1^{(n-e)}$ is $\G^{(n)}_1$-admissible; 
	\item[(ii)] The Rees algebra $\G^{(n)}_1$ is  a $\beta_1$-differential Rees algebra, therefore an elimination algebra can be defined as in   (\ref{eq:def-elimination-algebra}); 
	\item[(iii)] The transform of $\G^{(n-e)}$ in $V^{(n-e)}_1$,  $\G^{(n-e)}_1$,  is an elimination algebra of  $\G^{(n)}_1$, i.e., up to integral closure, 
	$$ \G^{(n-e)}_1=\G^{(n)}_1\cap {\mathcal O}_{V^{(n-e)}_1}[W].$$
\end{itemize}  
	
\end{parrafo}

\begin{remark}
	With the same setting and notation as in \ref{eliminacion_blow_ups}  above, we would like to point out that, even if we assume that the  Rees algebra $\G^{(n)}$ is a differential Rees algebra, after a permissible blow up,  its transform $\G_1^{(n)}$, in general,  is not a differential Rees algebra; however it is always a $\beta_1$-relative differential Rees algebra.  
\end{remark}

\begin{parrafo} \label{rmk:elim-alg-lower-dim} {\bf Elimination algebras and local sequences.} \cite[\S 16]{notas_Austria}
	Let $V^{(n)}$ be an $n$-dimensional smooth variety over a perfect field $k$, and let $\G^{(n)}$ be a Rees algebra over $V^{(n)}$. Consider a $\G^{(n)}$-admissible projection to a smooth variety $V^{(n-e)}$ of dimension $(n-e)$, say $\beta : V^{(n)} \to V^{(n-e)}$ and suppose that $\G^{(n)}$-is a $\beta$-differential Rees algebra. Define an elimination algebra  $\G^{(n-e)}$ as in (\ref{eq:def-elimination-algebra}). Then it can be checked that any $\G^{(n)}$-local sequence induces a $\G^{(n-e)}$-local sequence and a commutative diagram of smooth  morphims and transforms: 
\begin{equation} \label{diag:elim-alg-lower-dim}
	\xymatrix @R=0ex {
		(V^{(n)}, \G^{(n)}) \ar[dd]^-{\beta}  & (V^{(n)}_1, \G^{(n)}_{1} )   \ar[l] \ar[dd]^-{\beta_1}   &  \cdots \ar[l] &  (V^{(n)}_l, \G^{(n)}_{l})  \ar[l]  \ar[dd]^-{\beta_l} \\
		\big. \\
		(V^{(n-e)}, \G^{(n-e)}) & (V^{(n-e)}_1, \G^{(n-e)}_{1}) \ar[l] & \cdots \ar[l] &  (V^{(n-e)}_l,  \G^{(n-e)}_{l}) \ar[l] \\
	}
	\end{equation}%
	where each $\beta_i$ is $\G^{(n)}_i$-admissible,  $\G^{(n-e)}_i$  is an elimination algebra of $\G^{(n)}$, and $\beta_i \bigl( \Sing \G^{(n)}_i \bigr) \subseteq \Sing \G^{(n-e)}_i$.
	
	When the characteristic of the base field is zero, then the equality   $\beta_i \bigl( \Sing \G^{(n)}_i \bigr)=\Sing \G^{(n-e)}_i$ holds for $i=1,\ldots, n$, and therefore, in this case the elimination algebra $\G^{(n-e)}$ is stronly linked to $\G^{(n)}$. As a consequence, a resolution of $\G^{(n)}$ induces a resolution of $\G^{(n-e)}$ and vice versa (see Remark \ref{res_equiv}). 
	
	If the charateristic of the base field is positive, the  elimination algebra $\G^{(n-e)}$ might not be strongly linked to $\G^{(n)}$. Moreover, given $\G^{(n)}$ and $\beta : V^{(n)} \to V^{(n-e)}$ as above, it may occur that there is no Rees algebra over $V^{(n-e)}$ which is strongly linked to $\G^{(n)}$ (see Example~\ref{ex:elim-not-strongly} below).
	Still, it can be shown that $\G^{(n-e)}$ is the largest algebra over $V^{(n-e)}$ satisfying the containment 
	\begin{equation}  \label{eq:def-elimination-trees}
	\mathcal{F}_{V^{(n)}} \bigl( \G^{(n)} \bigr)
	\subset \mathcal{F}_{V^{(n)}} \bigl(\beta^* (\G^{(n-e)}) \bigr).
	\end{equation}
	 (see Theorem~\ref{thm:canonical-indiana}). Thus, an elimination algebra of $\G^{(n)}$ can also be regarded as a maximal Rees algebra over $V^{(n-e)}$ with the previous property.
\end{parrafo}

\begin{remark} \label{rmk:elim-alg-characteristic}
Let $\G^{(n)}$ be a Rees algebra over a regular variety $V^{(n)}$ defined over a perfect field $k$, and consider a $\G^{(n)}$-admissible projection, say $\beta : V^{(n)} \to V^{(n-e)}$. Assume in addition that  $\G^{(n)}$ is a differential Rees algebra, and consider the corresponding  elimination algebra   on $V^{(n-e)}$. Now 
suppose that $\mathcal{K}^{(n-e)}$ is a Rees algebra over $V^{(n-e)}$ which is strongly linked to $\G^{(n)}$. In such case, $\mathcal{K}^{(n-e)}$ is, necessarily, an elimination algebra of $\G^{(n)}$ (this follows from the definition of elimination algebra, and Theorem~\ref{thm:canonical-indiana}). 
\end{remark}

Now  suppose that  $\G^{(n)}$  and $\H^{(n)}$ are two weakly equivalent Rees algebras over $V^{(n)}$, and that  $\beta:  V^{(n)} \to V^{(n-e)}$ is $\G^{(n)}$-admissible.  Then it can be shown that      $\beta$ is also  $\H^{(n)}$-admissible (see \cite[\S 5, 6]{B}), and it is natural to ask about the relation between the  elimination algebras.  The next results address these questions:

\begin{theorem}[{cf. \cite[Theorem~4.11]{hpositive}}]
Let $V^{(n)}$ be a regular variety over a perfect field $k$, and let $\G^{(n)}$ be a Rees algebra over $V^{(n)}$. Consider a $\G^{(n)}$-admissible projection $\beta : V^{(n)} \to V^{(n-e)}$, and assume that  $\G^{(n)}$ is $\beta$-differential. Let $\mathcal{K}^{(n)}$ be another Rees algebra over $V^{(n)}$, so that  $\G^{(n)}\subset \mathcal{K}^{(n)}$  and so that the inclusion is   finite. Then $\mathcal{K}^{(n)} \cap \mathcal{O}_{V^{(n-e)}} [W]$ is finite over $\G^{(n)} \cap \mathcal{O}_{V^{(n-e)}} [W]$.
\end{theorem}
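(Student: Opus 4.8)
The plan is to argue locally and reduce the statement to a single integral-dependence assertion between the two elimination algebras, which is then promoted to finiteness by excellence. First I would pass to an affine chart and, since $\beta$ is smooth of relative dimension $e$, to an \'etale neighborhood in which $\mathcal{O}_{V^{(n)}} = S = S'[z_1,\dotsc,z_e]$, where $S' = \mathcal{O}_{V^{(n-e)}}$ and $z_1,\dotsc,z_e$ are relative coordinates. Writing $R = S'[W]\subset S[W]$, set $\mathcal{E} := \G^{(n)}\cap R$, the elimination algebra of $\G^{(n)}$. Because $\G^{(n)}$ is $\beta$-differential, $\mathcal{E}$ is a genuine elimination algebra (Definition~\ref{def:an-elimination-algebra}) and hence a finitely generated $S'$-algebra; being a domain of finite type over the excellent ring $S'$, its normalization $\overline{\mathcal{E}}$ is a finite $\mathcal{E}$-module (cf.\ \ref{integral_closure_Rees}). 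Thus it suffices to prove the single containment $\mathcal{K}^{(n)}\cap R\subseteq\overline{\mathcal{E}}$: then $\mathcal{K}^{(n)}\cap R$ is an $\mathcal{E}$-submodule of the finite module $\overline{\mathcal{E}}$ over the Noetherian ring $\mathcal{E}$, and is therefore finite over $\mathcal{E} = \G^{(n)}\cap R$.

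The heart is then to show that every homogeneous element $hW^m\in\mathcal{K}^{(n)}\cap R$ (so $h\in S'$) is integral over $\mathcal{E}$. Since $\G^{(n)}\subset\mathcal{K}^{(n)}$ is finite, $hW^m$ is integral over $\G^{(n)}$; extracting the homogeneous piece of an integral equation gives $h^N + c_1h^{N-1}+\dotsb+c_N = 0$ in $S$ with $c_iW^{mi}\in\G^{(n)}$, i.e.\ $c_i\in(\G^{(n)})_{mi}\subset S$. The issue is to replace these coefficients $c_i\in S$ by coefficients in $\mathcal{E} = \G^{(n)}\cap R$ (note that a naive specialization $z_i\mapsto 0$ only yields integrality over the restriction of $\G^{(n)}$ to a section, which is strictly larger than $\mathcal{E}$). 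Instead I would use the valuative criterion for integral dependence: $hW^m\in\overline{\mathcal{E}}$ amounts to $v(hW^m)\geq 0$ for every Rees valuation $v$ of the Rees algebra $\mathcal{E}$, each such $v$ being divisorial and centered over $\Sing(\mathcal{E})$. Here I would invoke that $\G^{(n)}$ is $\beta$-differential and $\beta$ is $\G^{(n)}$-admissible: by \ref{imagen_sing} one has $\Sing(\mathcal{E}) = \beta\bigl(\Sing(\G^{(n)})\bigr)$, and by \ref{eliminacion_blow_ups} this identification is compatible with the whole tree of permissible transformations, elimination commuting with permissible blow ups. Consequently each Rees valuation $v$ of $\mathcal{E}$ corresponds to a divisorial valuation $\widetilde v$ arising in the resolution tree of $\G^{(n)}$ upstairs, with $\widetilde v|_R = v$ and $\widetilde v(h) = v(h)$ since $h\in S'$. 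The integrality of $hW^m$ over $\G^{(n)}$ then gives $\widetilde v(hW^m)\geq 0$, whence $v(hW^m)\geq 0$; as this holds for all Rees valuations of $\mathcal{E}$, we conclude $hW^m\in\overline{\mathcal{E}}$.

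Combining the two steps yields $\mathcal{K}^{(n)}\cap R\subseteq\overline{\mathcal{E}}$ and hence the asserted finiteness. The main obstacle is the integrality step in positive characteristic: matching the Rees valuations of $\mathcal{E}$ with the valuations that control integral dependence over $\G^{(n)}$ relies on the equality $\beta(\Sing\G^{(n)}) = \Sing(\mathcal{E})$ being \emph{stable} along all permissible transformations, which is precisely the delicate point of elimination theory in char $p$ and the reason the hypotheses ``$\beta$ admissible'' and ``$\G^{(n)}$ $\beta$-differential'' cannot be dropped (compare \ref{imagen_sing}(ii) and Remark~\ref{rmk:elim-alg-characteristic}). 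A more algebraic alternative for this same step would be to first replace $\mathcal{K}^{(n)}$ by its relative differential saturation $\Diff_{V^{(n)}/V^{(n-e)}}(\mathcal{K}^{(n)})$, checking that it stays finite over the $\beta$-differential algebra $\G^{(n)}$, so that both algebras become $\beta$-differential with a common integral closure; one then needs only that elimination is compatible with integral closure, namely $\overline{\G^{(n)}\cap R} = \overline{\,\overline{\G^{(n)}}\cap R}$, which is the crux to be extracted from \cite[Theorem~4.11]{hpositive}.
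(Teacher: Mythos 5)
The paper itself gives no proof of this statement: it is imported (``cf.'') from \cite[Theorem~4.11]{hpositive}, so there is no in-paper argument to compare yours with, and your proposal has to stand on its own. Its outer frame does: reducing to the single containment $\mathcal{K}^{(n)}\cap R\subseteq\overline{\mathcal{E}}$ and then invoking excellence (\ref{integral_closure_Rees}) to get finiteness of $\overline{\mathcal{E}}$ over $\mathcal{E}$, hence of the intermediate submodule $\mathcal{K}^{(n)}\cap R$, is correct, and you rightly observe that the naive specialization $z_i\mapsto 0$ proves integrality over the wrong (strictly larger) algebra.

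The gap is in the valuative step. For homogeneous $hW^m$ with $h\in S'$, membership in $\overline{\mathcal{E}}$ means $v(h)\geq m\cdot\inf_k v(\mathcal{E}_k)/k$ for the relevant valuations $v$ of $S'$, whereas integrality over $\G^{(n)}$ gives, for an extension $\widetilde v$ of $v$ to $S$, only $\widetilde v(h)\geq m\cdot\inf_k\widetilde v\bigl((\G^{(n)})_k\bigr)/k$. To pass from the latter to the former you need $\inf_k\widetilde v\bigl((\G^{(n)})_k\bigr)/k\geq\inf_k v(\mathcal{E}_k)/k$ for some suitable extension $\widetilde v$; since $\mathcal{E}\subset\G^{(n)}$ it is the \emph{opposite} inequality that comes for free, and the one you need says precisely that the elimination algebra computes the normalized order of $\G^{(n)}$ along arbitrary divisorial valuations downstairs. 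That is the actual content of \cite[Theorem~4.11]{hpositive}, and it does not follow from $\beta(\Sing(\G^{(n)}))=\Sing(\mathcal{E})$ (\ref{imagen_sing}) nor from compatibility of elimination with permissible transformations (\ref{eliminacion_blow_ups}): those statements control only the locus where the normalized order is $\geq 1$, and only along sequences of permissible centers, while a Rees valuation of $\mathcal{E}$ is in general not reached by such a sequence and the relevant normalized order is an arbitrary positive rational. So the step ``$\widetilde v(hW^m)\geq 0$, whence $v(hW^m)\geq 0$'' begs the question; your ``algebraic alternative'' has the same status, since $\overline{\G^{(n)}\cap R}=\overline{\,\overline{\G^{(n)}}\cap R}$ is again the crux rather than a known input. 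The cited proof closes this by different means: it describes the relative-differential elimination algebra explicitly through weighted multi-symmetric functions of the coefficients of the universal monic polynomial and verifies the integral dependence by specialization from that universal case. Some such concrete input is unavoidable, especially in positive characteristic, where the analogous order comparison for the absolute differential saturation genuinely fails (Remark~\ref{elimination_abs}).
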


\begin{theorem}  [{cf. \cite[Corollary~4.14]{hpositive}}] \label{crl:elim-alg-without-closure} 
Let $V^{(n)}$ be a regular variety over a perfect field $k$, and let $\G^{(n)}$ be a Rees algebra over $V^{(n)}$. Consider a $\G^{(n)}$-admissible projection $\beta : V^{(n)} \to V^{(n-e)}$ and suppose that  $\G^{(n)}$ is a differential Rees algebra. Then the elimination algebra 
\[
	\G^{(n-e)} = \G^{(n)} \cap \mathcal{O}_{V^{(n-e)}} [W] .
\]%
is  also  a differential Rees algebra.
\end{theorem}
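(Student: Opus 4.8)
The plan is to verify directly the differential-saturation condition of \ref{Dif_Algebras} for $\G^{(n-e)}$, reducing everything to a lifting statement for differential operators along the smooth morphism $\beta$. Since $\G^{(n-e)}$ is already known to be a Rees algebra (an elimination algebra, see Definition~\ref{def:an-elimination-algebra}), it only remains to show that it is closed under the differential operators of $V^{(n-e)}$ over $k$. As differential saturation is a local, indeed \'etale-local, condition, I may assume $V^{(n)} = \Spec(B)$ and $V^{(n-e)} = \Spec(A)$ are affine, with $A \hookrightarrow B$ the inclusion induced by $\beta$, and identify $\mathcal{O}_{V^{(n-e)}}[W]$ with the subalgebra $A[W] \subset B[W]$. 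A homogeneous element of $\G^{(n-e)}$ is then of the form $gW^N$ with $g \in A$ and $gW^N \in \G^{(n)}$, and what I must prove is that for every $\Delta' \in \Diff^m_{A|k}$ with $m < N$ one has $\Delta'(g)W^{N-m} \in \G^{(n-e)}$.

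The key step is the following lifting lemma: every $\Delta' \in \Diff^m_{A|k}$ admits a lift $\tilde\Delta \in \Diff^m_{B|k}$ with $\tilde\Delta(A) \subseteq A$ and $\tilde\Delta|_A = \Delta'$. To produce it I would use smoothness of $\beta$: after possibly passing to an \'etale neighborhood, choose relative coordinates $z_1, \dotsc, z_e \in B$ whose differentials form a basis of $\Omega_{B/A}$, together with coordinates $x_1, \dotsc, x_{n-e}$ on $A$, so that $x_1,\dotsc,x_{n-e},z_1,\dotsc,z_e$ is a coordinate system on the smooth $k$-algebra $B$. In these coordinates $\Diff^m_{B|k}$ is free over $B$ on the Hasse--Schmidt monomials $D_x^{(\alpha)}D_z^{(\gamma)}$ with $|\alpha|+|\gamma| \leq m$, each operator $D_x^{(\alpha)}$ satisfies $D_x^{(\alpha)}(A) \subseteq A$ and kills the $z_j$, and the restrictions $\bar D^{(\alpha)} := D_x^{(\alpha)}|_A$ form a basis of $\Diff_{A|k}$ over $A$. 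Writing $\Delta' = \sum_{|\alpha|\leq m} a_\alpha \bar D^{(\alpha)}$ with $a_\alpha \in A$, the operator $\tilde\Delta := \sum_{|\alpha|\leq m} a_\alpha D_x^{(\alpha)} \in \Diff^m_{B|k}$ has the required properties, since for every $g \in A$ one computes $\tilde\Delta(g) = \sum_\alpha a_\alpha D_x^{(\alpha)}(g) = \sum_\alpha a_\alpha \bar D^{(\alpha)}(g) = \Delta'(g) \in A$.

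With this lemma in hand the conclusion is immediate. Given $gW^N \in \G^{(n-e)}$ and $\Delta' \in \Diff^m_{A|k}$ with $m < N$, lift $\Delta'$ to $\tilde\Delta \in \Diff^m_{B|k}$. Because $gW^N \in \G^{(n-e)} \subseteq \G^{(n)}$ and $\G^{(n)}$ is an (absolute) differential Rees algebra, the saturation condition of \ref{Dif_Algebras} yields $\tilde\Delta(g)W^{N-m} \in \G^{(n)}$. But $\tilde\Delta(g) = \Delta'(g) \in A$, whence $\Delta'(g)W^{N-m} \in \G^{(n)} \cap A[W] = \G^{(n-e)}$. This establishes the differential saturation of $\G^{(n-e)}$, and since the property is local it holds over all of $V^{(n-e)}$; together with the fact that $\G^{(n-e)}$ is finitely generated this shows it is a differential Rees algebra.

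I expect the main obstacle to be the lifting lemma in positive characteristic, where $\Diff^m_{B|k}$ is not generated by first-order operators and one must work with Hasse--Schmidt (divided-power) differential operators; the crucial input is precisely that smoothness of $\beta$ furnishes a coordinate system splitting the base and fibre directions, so that the base operators $D_x^{(\alpha)}$ can be chosen to preserve $A$ and to restrict correctly. The remaining points—that $\tilde\Delta$ has order $\leq m$ and that $\tilde\Delta|_A = \Delta'$ on all of $A$—are transparent in the chosen coordinates and amount only to bookkeeping.
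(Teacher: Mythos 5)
Your argument is correct, but note that the paper does not actually prove this statement: it is quoted from \cite[Corollary~4.14]{hpositive}, so there is no internal proof to compare against. Your route --- reduce to the affine case $A\hookrightarrow B$, lift each $\Delta'\in\Diff^m_{A|k}$ to an operator $\tilde\Delta\in\Diff^m_{B|k}$ with $\tilde\Delta|_A=\Delta'$, apply the absolute differential saturation of $\G^{(n)}$ to $gW^N$, and land back in $\G^{(n)}\cap A[W]$ --- is the standard mechanism underlying the cited result, and all the ingredients hold: since $\beta$ is smooth and $k$ is perfect, one has (Zariski- or \'etale-) locally an \'etale coordinate system $x_1,\dotsc,x_{n-e},z_1,\dotsc,z_e$ on $B$ extending one on $A$ (the sequence $0\to\beta^*\Omega_{A/k}\to\Omega_{B/k}\to\Omega_{B/A}\to 0$ splits locally), the Hasse--Schmidt monomials $D_x^{(\alpha)}D_z^{(\gamma)}$ do form a $B$-basis of $\Diff^m_{B|k}$, and the extension of $D_x^{(\alpha)}$ along the \'etale map $k[x,z]\to B$ restricts on $A$ to the extension along $k[x]\to A$, so $D_x^{(\alpha)}(A)\subseteq A$. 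Two points you treat lightly deserve a sentence each in a final write-up. First, the coordinates exist only on a cover of $V^{(n)}$, whereas $\Delta'(g)$ is a global section of $\mathcal{O}_{V^{(n-e)}}$; since $\G^{(n)}_{N-m}$ is a quasi-coherent sheaf it suffices to verify $\Delta'(g)\in\G^{(n)}_{N-m}$ chart by chart, and if \'etale covers are used one must add that the formation of $\G^{(n)}\cap\mathcal{O}_{V^{(n-e)}}[W]$ commutes with flat base change and conclude by faithfully flat descent. Second, finite generation of $\G^{(n-e)}$ over $\mathcal{O}_{V^{(n-e)}}$ is part of being a Rees algebra and is not addressed by the saturation argument; it is supplied by Definition~\ref{def:an-elimination-algebra} (i.e., by the theory of elimination algebras already invoked in the statement), so you are right to take it as given, but it should be cited rather than folded into ``it only remains to show saturation.'' With those caveats the proof is complete and, in contrast with the purely referential treatment in the paper, self-contained.
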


The previous theorems together with the discussion in \ref{rmk:elim-alg-lower-dim} lead us to the following conclusion: when the characteristic of the base field is zero, if two Rees algebras   are weakly equivalent, after considering an  admissible projection their respective  elimination algebras  are weakly equivalent; in positive characteristic this fails to hold in general. 

\endgroup

\

The following example illustrates that, in positive characteristic, an elimination algebra  might not be strongly linked to the original Rees algebra.

\begin{example} \label{ex:elim-not-strongly}
Suppose that $k$ is a perfect field of characteristic $2$. Consider the curve $X = \Spec(k[x,y] / \langle y^2-x^3\rangle)$, endowed with its natural immersion in $V^{(2)} = \Spec(k[x,y])$. One can check that
\[
	\G^{(2)}
	= \mathcal{O}_{V^{(2)}} \left [ x^2 W, (y^2 - x^3) W^ 2 \right ]
\]%
is a differential algebra over $V^{(2)}$ which represents $F_2(X)$. Moreover, the inclusion $k[x] \subset k[x,y]$ induces a $\G^{(2)}$-admissible projection of $V^{(2)}$ to $V^{(1)} = \Spec(k[x])$, say $\beta : V^{(2)} \to V^{(1)}$. Thus, by Corollary~\ref{crl:elim-alg-without-closure}, we see that $\G^{(1)} = \mathcal{O}_{V^{(1)}} \left [ x^2 W \right ]$ is an elimination algebra of $\G^{(2)}$.   However, by blowing up $V^{(2)}$ and $V^{(1)}$ at the origin, one can check that $\G^{(1)}$ is not strongly linked to $\G^{(2)}$. To see this recall how Rees algebras transform after a permissible blow up (see \ref{weaktransforms}). Observe that after the blow up, $V^{(2)}\leftarrow V^{(2)}_1$,  at the $k\left[x, \frac{y}{x}\right]$-affine chart, 
$$\G^{(2)}_1=k\left[x, \frac{y}{x}\right] \left[xW, \left(\left(\frac{y}{x}\right)^2-x\right)W^2\right],$$ and therefore, here $\Sing \G^{(2)}_1=\emptyset$. On the other hand, after the blow up $V^{(1)}\leftarrow V^{(1)}_1$ (which is an isomorphism on $V^{(1)}$) $\G^{(1)}_1=k[x][xW]$, and hence $\Sing \G^{(1)}_1\neq \emptyset$. \color{black}
\end{example}

\begin{example}[Representation and elimination in characteristic zero]
\label{ex:representation-in-dim-d} 
Let $X=\text{Spec}(B)$ and $V$ be as in \ref{Main_Ideas_Mult}. Set $V^{(d+m)}:=V$.  With  the same notation  and setting as in \ref{Main_Ideas_Mult},    the Rees algebra  \[
	\G^{(d+m)}
	= \mathcal{O}_{V^{(d+m)}} [f_1(Z_1) W^{d_1}, \dotsc, f_m(Z_m) W^{d_m}]
	\subset \mathcal{O}_{V^{(d+m)}} [W]
\]%
represents $F_s(X)$. Observe that this means that $F_s(X)$ is {\em represented} by $\bigcap_{i=1}^m F_{d_i} (f_i(Z_i))$. (i.e.,  $F_s(X)$ is the intersection of the maximum multiplicity loci of the hypersurfaces defined by the$f_i(Z_i)$ -at least in an \'etale neighborhood of $\xi$). In this setting, it can be proved that the natural projection $\beta : V^{(d+m)} \to V^{(d)} = \Spec(S)$ is $\G^{(d+m)}$-admissible. Moreover, an elimination algebra of $\G^{(d+m)}$ can be obtained by considering suitable functions on the coefficients of the polynomials $f_1(Z_1), \dotsc, f_m(Z_m)$. Namely, when $\characteristic(k) = 0$, one can find elements $a_1, \dotsc, a_m \in S$ so that, after taking the change of variables $Z_i' = Z_i + a_i$,
\[
	f_i(Z_i) = (Z'_i)^{d_i} + b_{i,2} (Z'_i)^{d_i-2} + \dotsb + b_{i,d_i} ,
\]%
with $b_{i,j} \in S$ for all $i,j$. In this case, the $S$-algebra generated by the elements $b_{i,j} W^{j}$, with $1 \leq i \leq m$ and $2 \leq j \leq d_i$, say $\G^{(d)}
	= S[b_{i,j} W^{j}]
	\subset S[W]
$ 
is an elimination algebra of $\G^{(d+m)}$ (see \cite{hpositive} and    \cite[Remark 16.10]{notas_Austria} for further details).
\end{example}


\part{Strong transversality} \label{parte_3} 

\section{Characterization of strongly transversal morphisms} \label{demo_theo}

Let $k$ be a perfect field, and let $\beta : X' \to X$ be a   transversal morphism of singular varieties over $k$. Recall that this is a finite and dominant morphism such that $\max\mult(X') = r \cdot \max\mult(X)$, where $r$ is the generic rank of $\beta : X' \to X$ (see Definition~\ref{def:transversal}). According to Theorem~\ref{thm:rest_definida_b} and Definition~\ref{def:algebra_restringida_b}, one can attach an intrinsic Rees algebra to the stratum of maximum multiplicity of $X$, say $\G_X$. Recall that this algebra is defined in \'etale topology. Similarly, one can attach an intrinsic algebra to the stratum of maximum multiplicity of $X'$, say $\G_{X'}$ (see Remark \ref{contenidos_algebras} below). As Proposition~\ref{prop:GX-subset-GX'} shows, there a relation between $\G_X$ and $\G_{X'}$. Along this section we will study the connection between the transversal morphism $\beta$ and the algebras $\G_X$ and $\G_{X'}$. The main result is Theorem~\ref{thm:strong-homeo}.

\begin{proposition}[{\cite[Proposition~6.3]{Carlos}}]
\label{prop:GX-subset-GX'}
Let $\beta : X' \to X$ be a finite morphism of singular varieties defined over a perfect field $k$. Let $\G_X \subset \mathcal{O}_X[W]$ and $\G_{X'} \subset \mathcal{O}_{X'}[W]$ denote the intrinsic Rees algebras attached to the strata of maximum multiplicity of $X$ and $X'$ respectively. If $\beta$ is transversal, then there is an inclusion $\G_X \subset \G_{X'}$.
\end{proposition}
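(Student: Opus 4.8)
The plan is to compare $\G_X$ and $\G_{X'}$ by exhibiting presentations of $\Max\mult(X)$ and $\Max\mult(X')$ over a \emph{common} regular base, and then pushing an inclusion of the presenting Rees algebras through the three operations that build $\G_X$ and $\G_{X'}$ in Definition~\ref{def:algebra_restringida_b}. The whole argument is local in \'etale topology near a point $\xi' \in F_{rs}(X')$ and its image $\xi = \beta(\xi') \in F_s(X)$; by Theorem~\ref{thm:rest_definida_b} and Remark~\ref{rmk:algebra_restringida_b} the algebras are independent of the chosen presentations, so the resulting inclusion is intrinsic. Thus I may assume $X = \Spec(B)$ and $X' = \Spec(B')$ with $B \subset B'$ a finite transversal extension of generic rank $r$.

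First I would produce the common base. Following the construction behind Theorem~\ref{Equim_Orlando} (see \ref{Main_Ideas_Mult}), after passing to an \'etale neighborhood I may choose a regular $k$-subalgebra $S \subset B$ so that $B$ is finite over $S$ of generic rank $s = \max\mult(X)$ and condition (*) of \ref{condicion_estrella} holds at $\xi$. The key claim, and the one place transversality is essential, is that the \emph{same} $S$ also presents $\Max\mult(X')$. Writing $P' \subset B'$ for the prime of $\xi'$ and $\p = P' \cap S$, the extension $S \subset B'$ is finite of generic rank $rs$, and since $S$ is regular one has $e_{S_\p}(\p S_\p) = 1$; hence condition (*) for $S \subset B'$ at $P'$ reduces to $e_{B'_{P'}}(P'B'_{P'}) = rs$, which holds precisely because $\xi' \in F_{rs}(X')$ and $\max\mult(X') = rs$ by transversality. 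Therefore the construction of \ref{Main_Ideas_Mult} applies verbatim to $S \subset B'$.

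Writing $B = S[\theta_1,\dots,\theta_m]$ and $B' = S[\theta_1,\dots,\theta_m,\eta_1,\dots,\eta_p]$, these generators give closed immersions $X \hookrightarrow V := \Spec(S[Z_1,\dots,Z_m])$ and $X' \hookrightarrow V' := \Spec(S[Z_1,\dots,Z_m,T_1,\dots,T_p])$, together with the smooth projection $\pi : V' \to V$ forgetting the $T_j$, which restricts to $\beta$ on $X'$. As in \ref{Main_Ideas_Mult}, $\Max\mult(X)$ and $\Max\mult(X')$ are presented by
\[
\G_V = \Oh_V[f_1(Z_1)W^{d_1},\dots,f_m(Z_m)W^{d_m}], \qquad
\G_{V'} = \Oh_{V'}\bigl[f_1(Z_1)W^{d_1},\dots,f_m(Z_m)W^{d_m},\, h_1(T_1)W^{c_1},\dots,h_p(T_p)W^{c_p}\bigr],
\]
where $f_i$ and $h_j$ are the minimal polynomials over $\operatorname{Frac}(S)$ of $\theta_i$ and $\eta_j$. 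Since the minimal polynomial of $\theta_i$ is the same whether computed in $B$ or in $B'$, the generators of $\G_V$ occur, after pullback by $\pi$, among those of $\G_{V'}$, giving $\pi^*\G_V \subset \G_{V'}$, which is the geometric heart of the statement.

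Finally I would transport this inclusion through the three operations of Definition~\ref{def:algebra_restringida_b}. For differential saturation: $\pi$ being smooth yields $\Diff_{V|k} \subset \Diff_{V'|k}$ (a $k$-operator in the $S,Z$-directions extends to $V'$ by acting trivially on the $T_j$, and this extension commutes with $\pi^*$), so $\pi^*\Diff(\G_V) \subset \Diff(\G_{V'})$. For restriction: the commutative square relating $\Oh_V[W]$, $\Oh_{V'}[W]$, $\Oh_X[W]$ and $\Oh_{X'}[W]$, together with $\pi|_{X'} = \beta$, shows that the image of $\Diff(\G_V)\rvert_X$ in $\Oh_{X'}[W]$ lies in $\Diff(\G_{V'})\rvert_{X'}$. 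For integral closure: integral closure is monotone under ring homomorphisms and $\G_{X'} = \overline{\Diff(\G_{V'})\rvert_{X'}}$ is integrally closed, so the image of $\G_X = \overline{\Diff(\G_V)\rvert_X}$ lands in $\G_{X'}$. The main obstacle is the common-base step: verifying that transversality forces condition (*) for $S \subset B'$, which is exactly what allows a single regular base to present both multiplicity strata simultaneously; the remaining propagation is formal, modulo care about the total rings of fractions in which the integral closures are taken and about the precise compatibility of differential saturation with the smooth pullback $\pi^*$.
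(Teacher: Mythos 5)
Your argument is correct, and it is essentially the construction this paper itself relies on: the statement is quoted from \cite[Proposition~6.3]{Carlos} without proof here, but the common regular base $S$, the two presentations $\G^{(d+n)}\subset\G^{(d+n')}$ generated by minimal polynomials over $S$ (with the generators for $B$ reappearing among those for $B'$), and the propagation through differential saturation, restriction and integral closure are exactly the setup of Remark~\ref{rmk:proof:strong-homeo} and the proof in \ref{demo}. Your identification of the one non-formal point --- that transversality forces condition~(*) of \ref{condicion_estrella} for $S\subset B'$ at every point of $F_{rs}(X')$, since $S$ regular gives $e_{S_{\p}}(\p S_{\p})=1$ and the generic rank of $S\subset B'$ is $rs=\max\mult(X')$ --- is also the correct crux.
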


\begin{theorem} \label{thm:strong-homeo}
Let $\beta : X' \to X$ be a transversal morphism of generic rank $r$ between two  singular  varieties  defined over a perfect  field $k$. Then:
\begin{enumerate}
\item If $\beta : X' \to X$ is strongly transversal then the inclusion $\G_X\subset \G_{X'}$ is finite; 
\item If $k$ is a field of characteristic zero, then the converse holds. Namely, if the inclusion $\G_X \subset \G_{X'}$ is finite, then $\beta : X' \to X$ is strongly transversal.
\end{enumerate}
\end{theorem}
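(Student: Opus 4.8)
The plan is to transport the whole problem to a common $d$-dimensional regular base, where honest Rees algebras and the machinery of weak equivalence and elimination from Part~\ref{parte_2} are available. First I would reduce to the local affine situation of \ref{Main_Ideas_Mult}: after passing to a suitable \'etale neighbourhood of a point $\xi \in F_s(X)$, I may assume $X = \Spec(B)$, $X' = \Spec(B')$ with $B \subset B'$ finite of generic rank $r$, and choose a regular domain $S$ so that $B$ is finite over $S$ of generic rank $s$. Then $B'$ is finite over $S = \mathcal{O}_Z$ of generic rank $rs$, giving finite projections $X' \to X \to Z := \Spec(S)$, all of the same dimension $d$. Building the presentations $(V,\G)$ of $F_s(X)$ and $(V',\G')$ of $F_{rs}(X')$ as in \ref{Main_Ideas_Mult}, the natural projections $V \to Z$ and $V' \to Z$ are admissible (Example~\ref{ex:representation-in-dim-d}); after replacing $\G,\G'$ by their differential saturations I obtain elimination algebras $\G_Z := \Diff(\G)\cap\mathcal{O}_Z[W]$ and $\G'_Z := \Diff(\G')\cap\mathcal{O}_Z[W]$ on $Z$, still differential by Theorem~\ref{crl:elim-alg-without-closure}.

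The key dictionary I would establish is twofold. On the geometric side, since $\Diff(\G)$ and $\Diff(\G')$ are differential, \ref{imagen_sing}(ii) gives $\beta(\Sing\G)=\Sing\G_Z$ and likewise for $X'$, so the finite projections restrict to homeomorphisms $F_s(X)\xrightarrow{\sim}\Sing\G_Z$ and $F_{rs}(X')\xrightarrow{\sim}\Sing\G'_Z$; as $\beta(F_{rs}(X'))\subseteq F_s(X)$ (Proposition~\ref{homeomorphism}) this yields $\Sing\G'_Z\subseteq\Sing\G_Z$, with $\beta$ surjective onto $F_s(X)$ exactly when $\Sing\G'_Z=\Sing\G_Z$. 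On the algebraic side, the inclusion $\G_X\subset\G_{X'}$ of Proposition~\ref{prop:GX-subset-GX'} corresponds on $Z$, after intersecting with $\mathcal{O}_Z[W]$, to an inclusion $\G_Z\subset\G'_Z$ (up to integral closure); moreover, because the coefficient rings $B,B'$ are finite over $S$, the algebra $\G_X$ is finite over the pullback $\pi^{*}\G_Z$ and $\G_{X'}$ over $\pi'^{*}\G'_Z$. Hence finiteness of $\G_X\subset\G_{X'}$ is equivalent to finiteness (integrality) of the inclusion $\G_Z\subset\G'_Z$ of $\mathcal{O}_Z$-Rees algebras.

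With this dictionary, part~(1) would run as follows, in arbitrary characteristic. Strong transversality forces $\beta$ to map $F_{rs}(X')$ onto $F_s(X)$ (Remark~\ref{rmk:strongly-transversal-equiv-surjective}), so $\Sing\G_Z=\Sing\G'_Z$; combined with $\G_Z\subset\G'_Z$ and the equidimensionality of $Z$, I would invoke B\"oger's reduction criterion (Theorem~\ref{TB}) together with Theorem~\ref{HS} to conclude that $\G'_Z$ is integral over $\G_Z$, i.e.\ the inclusion is finite, whence $\G_X\subset\G_{X'}$ is finite. For part~(2), in characteristic zero, I would run the dictionary backwards: finiteness of $\G_X\subset\G_{X'}$ gives that $\G'_Z$ is integral over $\G_Z$, and since both are differential, Theorem~\ref{thm:canonical-indiana} ($\overline{\Diff(\G_Z)}=\overline{\Diff(\G'_Z)}$) shows $\G_Z$ and $\G'_Z$ are weakly equivalent, so $\mathcal{F}_Z(\G_Z)=\mathcal{F}_Z(\G'_Z)$. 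Finally, in characteristic zero the elimination algebras are strongly linked to their presentations (\ref{rmk:elim-alg-lower-dim}), so the multiplicity trees satisfy $\mathcal{F}(X)=\mathcal{F}_Z(\G_Z)=\mathcal{F}_Z(\G'_Z)=\mathcal{F}(X')$ compatibly with $\beta$; unwinding this identification of trees is exactly the assertion that $\beta$ is strongly transversal.

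The main obstacle I anticipate is the dictionary lemma of the second paragraph: precisely controlling how the intrinsic algebras $\G_X,\G_{X'}$ (which live on the singular $X,X'$, after $\Diff$, restriction and integral closure) relate to the elimination algebras $\G_Z,\G'_Z$ on the regular base, and proving that finiteness upstairs is equivalent to integrality downstairs. The step from integrality of the eliminations to full equality of resolution trees is where characteristic zero is indispensable (strong linkage, and \ref{imagen_sing}(i) versus (ii)); this is precisely the point that breaks in positive characteristic, in accordance with Example~\ref{contraejemplo}.
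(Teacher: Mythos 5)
Your overall architecture --- reduce to a common regular base $Z=\Spec(S)$ via \ref{Main_Ideas_Mult}, pass to elimination algebras, and use the finiteness of the inclusion of an elimination algebra into the restriction of the ambient algebra (Corollary~\ref{crl:extension_theorem}) as the bridge between ``upstairs'' and ``downstairs'' --- is the paper's architecture, and your argument for part~(2) is essentially the paper's: finiteness of $\G_X\subset\G_{X'}$ forces $\mathcal{H}'$ to be finite over $\mathcal{H}$, both are differential, so Theorem~\ref{thm:canonical-indiana} gives weak equivalence, and strong linkage in characteristic zero converts this into strong transversality. That half is fine.

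Part~(1), however, has a genuine gap. You deduce from strong transversality only the equality of singular loci $\Sing(\G_Z)=\Sing(\G'_Z)$ and then claim that this, together with the inclusion $\G_Z\subset\G'_Z$, yields integrality via B\"oger's criterion (Theorem~\ref{TB}) and Theorem~\ref{HS}. This step fails for two reasons. First, B\"oger's theorem is a statement about a pair of ideals $I\subset J\subset\sqrt{I}$ and their multiplicities at the minimal primes of $I$; it says nothing about two nested graded Rees algebras having the same singular locus (a condition involving order bounds $\nu_\xi(I_n)\geq n$ in every degree). Indeed, the implication you want is false: for $S=k[x]$ the algebras $S[x^2W]\subset S[xW]$ have the same singular locus $\{x=0\}$, yet $xW$ is not integral over $S[x^2W]$ (an integral dependence relation would force $x^n\in\langle x^{n+1}\rangle$). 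Second, you only use the surjectivity of $\beta$ on the maximum multiplicity strata \emph{at the initial stage}, whereas strong transversality is a condition on \emph{all} $F_s$-local sequences, and it is exactly this extra strength that the paper exploits: strong transversality makes $\G^{(d+n)}$ strongly linked to $\G^{(d+n')}$, hence (by the maximality characterization of elimination algebras with respect to trees of transformations, Remark~\ref{rmk:elim-alg-characteristic}) an elimination algebra of $\G^{(d+n')}$, and only then does Corollary~\ref{crl:extension_theorem} deliver the finiteness of $\G^{(d+n)}\subset\G^{(d+n')}_{|_{B'}}$, and with it of $\G_X\subset\G_{X'}$. Replacing your B\"oger argument by this elimination-algebra argument is not a cosmetic change; it is the content of part~(1).
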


The proof of this theorem requires some preliminary technical results. The main ideas of the proof can be found in Remark~\ref{rmk:proof:strong-homeo} below. The proof will be addressed in \ref{demo}.

\begin{remark}\label{contenidos_algebras}
Recall that $\G_X$ and $\G_{X'}$ are only defined locally in \'etale topology. However, as we will explain below, given a point $\xi \in F_s(X)$, it is possible to find a suitable \'etale neighborhood of $X$ at $\xi$, say $\widetilde{X} \to X$, so that one can construct the intrinsic algebra $\G_{\widetilde{X}}$ associated to $\widetilde{X}$, as well as the intrinsic algebra $\G_{\widetilde{X}'}$ associated to $\widetilde{X}' = X' \times_X \widetilde{X}$ (see Remark~\ref{rmk:proof:strong-homeo}). It is in this setting that there is an inclusion $\G_{\widetilde{X}'} \subset \G_{\widetilde{X}}$, and in which we will compare these algebras.
\end{remark}

\begin{remark} \label{rmk:thm:strong-homeo}
In case that $\characteristic k = 0$, the theorem says that $\beta : X' \to X$ is strongly transversal if and only if the inclusion $\G_X\subset \G_{X'}$ is finite. That is, we can characterize the strong transversality of $\beta$ by means of the intrinsic algebras $\G_X$ and $\G_{X'}$. On the other hand, when $\characteristic k > 0$, property (1) holds as well,    but the condition on the characteristic is necessary in (2),    as the following example shows. 
\end{remark}

\begin{example} \label{contraejemplo}
Here we exhibit a transversal morphism $\beta : X' \to X$ where $\G_{X'}$ is finite over $\G_X$, but $X'$ is not strongly transversal to $X$.

Let $k$ be a field of characteristic $2$. Consider the varieties $X = \Spec(B)$ and $X' = \Spec(B')$, where
\[
	B = k[t,x,y] / \langle y^4 - x^{13} \rangle
	\quad \text{and} \quad
	B' = k[t,x,y,z] / \langle z^2 - x^5, y^4 - x^{13} \rangle.
\]%
Note that there is a natural finite and dominant morphism $\beta : X' \to X$ given by the inclusion $B \subset B'$, which has generic rank $2$. One can check that $\max\mult(X) = 4$ and $\max\mult(X') = 8$ (both values are attained at the origin), and thus $\beta : X' \to X$ is transversal (see Definition~\ref{def:transversal}). Next we shall show that $\G_{X'}$ is finite over $\G_{X}$, but $X'$ is not strongly transversal to $X$.

Set $S = k[t,x]$, $V = \Spec(k[t,x,y])$, and $V'=\Spec(k[t,x,y,z])$. Since $S  \subset B$ is a finite extension of generic rank $4$, the closed set $F_4(X)$ is represented in $V$ by the differential Rees algebra
\[
	\G = k[t,x,y][x^{12} W^3, (y^4-x^{13})W^4]
	\subset k[t,x,y][W],
\]%
and $\G_X = \overline{\G \rvert_X}$ (see \ref{Main_Ideas_Mult} and Definition~\ref{def:algebra_restringida_b}). Similarly, the differential algebra
\[
	\G' = k[t,x,y,z][x^4 W, (z^2-x^5) W^2, (y^4 - x^{13}) W^4]
	\subset k[t,x,y,z][W]
\]%
represents $F_8(X')$ in $V'$, and $\G_{X'} = \overline{\G' \rvert_{X'}}$. Note that the map $\G \to \G \rvert_X$ sends the element $(t^4-x^7)W^4$ to zero. Hence $\G \rvert_X$ is generated by the class of $x^{12} W^3$ in $B[W]$. Similarly, one readily checks that $\G' \rvert_{X'}$ is generated by the class of $x^4 W$ in $B'[W]$. Since $(x^4 W)^3 = x^{12} W^3$, it follows that $\G' \rvert_{X'}$ is integral over $\G \rvert_X$. Thus we see that $\G_{X'} = \overline{\G' \rvert_{X'}}$ is finite over $\G_X = \overline{\G \rvert_X}$.

Next we will show that $\beta : X' \to X$ is not strongly transversal. Consider the blow ups of $X' \subset V'$ and $X \subset V$ at the origin, i.e., at the closed points defined by $\langle t,x,y \rangle$ and $\langle t,x,y,z \rangle$ respectively. These blow ups induce a commutative diagram of inclusions and finite morphisms as follows:
\[ \xymatrix@R=15pt{
	X' \subset V' \ar[d] &
		X'_1 \subset V'_1 \ar[d] \ar[l] \\
	X \subset V &
		X_1 \subset V_1. \ar[l]
} \]%
One can see that the strict transforms of $X$ and $X'$ on the $t$-chart of these blow ups are given by
\[
	X_1 = \Spec \bigl( k[t_1,x_1,y_1] / \langle y_1^4 - t_1^9 x_1^{13} \rangle \bigr), \ \text{ and } \ 
	X'_1 = \Spec \bigl( k[t_1,x_1,y_1,z_1]
		/ \langle z_1^2 - t_1^3 x_1^5, y_1^4 - t_1^9 x_1^{13} \rangle \bigr)
\]%
(see \cite[Remark~5.3]{simplification-equimult}). Next, consider the centers $Y_1 \subset F_4(X_1)$ and $Y'_1 \subset F_8(X'_1)$ defined by $\langle t_1,y_1 \rangle$ and $\langle t_1,y_1,z_1 \rangle$ respectively. Note that $Y'_1$ sits on $Y_1$ via the finite morphism $X'_1 \to X$. The blow ups of $X_1 \subset V_1$ and $X'_1 \subset V'_1$ along these centers induce a commutative diagram
\[ \xymatrix@R=15pt{
	X' \subset V' \ar[d] &
		X'_1 \subset V'_1 \ar[d] \ar[l] &
		X'_2 \subset V'_2 \ar[d] \ar[l] \\
	X \subset V &
		X_1 \subset V_1 \ar[l] &
		X_2 \subset V_2. \ar[l]
} \]%
Moreover, on the $t_1$-chart, $X_2$ and $X'_2$ are given by
\[
	\Spec \bigl( k[t_2,x_2,y_2] / \langle y_2^4 - t_2^5 x_2^{13} \rangle \bigr), \ \text{ and  } \  
	\Spec \bigl( k[t_2,x_2,y_2,z_2]
		/ \langle z_2^2 - t_2 x_2^5, y_2^4 - t_2^5 x_2^{13} \rangle \bigr)
\]%
respectively. Next observe that the (non-closed) point $\xi_2 \in X_2$ defined by the ideal $\langle t_2, y_2 \rangle$ belongs to $F_4(X_2)$. However, the (non-closed) point $\xi'_2 \in X'_2$ defined by $\langle t_2,y_2,z_2\rangle$, which sits on $\xi_2$, does not belong to $F_8(X'_2)$, since $x_2$ is unit in $k[t_2,x_2,y_2,z_2]_{\langle t_2,y_2,z_2\rangle}$. This shows that $F_8(X'_2)$ is not homeomorphic to $F_4(X_2)$, and therefore $X'$ is not strongly transversal to $X$.
\end{example}

\begin{remark} \label{rmk:proof:strong-homeo} 
Here we outline the main ideas of the proof of Theorem~\ref{thm:strong-homeo} (the proof itself is addressed in \ref{demo}). Suppose that $X$ and $X'$ were affine, say $X = \Spec(B)$ and $X' = \Spec(B')$, with the morphism $\beta : X' \to X$ given by a finite inclusion $B \subset B'$. After replacing $B$ and $B'$ by suitable \'etale neighborhoods   (which can be done without loss of generality by Remarks \ref{rmk:strongly-transversal-equiv-surjective} and  \ref{rmk:multiplicity-etale}), we may assume that there is a regular $k$-algebra contained in $B$, say $S$, so that $S \subset B$ is a finite inclusion of generic rank $s = \max\mult(X)$, and $S \subset B'$ is a finite inclusion of generic rank $rs = \max\mult(X')$ (see the discussion in \ref{Main_Ideas_Mult}). In other words, the finite morphisms $X \to \Spec(S)$ and $X' \to \Spec(S)$ are transversal.\color{black}

Next we proceed as in \ref{Main_Ideas_Mult}. Set $d = \dim B = \dim B'$. One can construct a closed immersion of $X$ in a regular ambient space $V^{(d+n)} = \Spec(S[Z_1, \dotsc, Z_n])$, say $X \hookrightarrow V^{(d+n)}$, and a differential Rees algebra over $V^{(d+n)}$, say $\G^{(d+n)}$, which represents $F_s(X)$ (see diagram \eqref{diag:rmk75} below).
Similarly, there is an integer $n' \geq n$ so that one can construct an embedding of $X'$ in $V^{(d+n')} = \Spec(S[Z_1, \dotsc, Z_n, \dotsc, Z_{n'}])$, and a differential Rees algebra over $V^{(d+n')}$, say $\G^{(d+n')}$, which represents $F_{rs}(X')$. Thus we get a commutative diagram
\begin{equation}  \label{diag:rmk75}
\xymatrix@R=15pt @C=0em {
	\G^{(d+n')} &
		V^{(d+n')} \ar[d] &
		\hskip2em &
		X' \ar[d]^{\beta} \ar@{_(->}[ll] \\
	\G^{(d+n)} &
		V^{(d+n)} \ar[d] &
		&
		X \ar[lld] \ar@{_(->}[ll] \\
	&
		\Spec(S) .
} \end{equation}%
Recall that, in this setting, 
\[
	\G_X = \overline{ \left ( \G^{(d+n)}_{|_{X}} \right ) } \subset B[W] ,
	\quad \text{and} \quad
	\G_{X'} = \overline{ \left ( \G^{(d+n')}_{|_{X'}} \right ) } \subset B'[W].
\]%
In addition, since $X \to \Spec(S)$ is transversal, one readily checks that $V^{(d+n)} \to \Spec(S)$ is $\G^{(d+n)}$-admissible. Similarly, one can check that the morphism $V^{(d+n')} \to \Spec(S)$ is $\G^{(d+n')}$-admissible. Thus, by Corollary~\ref{crl:elim-alg-without-closure}, the Rees algebras
\[
	\mathcal{H} = \bigl( \G^{(d+n)} \cap S[W] \bigr) \subset S[W]
	\quad \text{and} \quad
	\mathcal{H}' = \bigl( \G^{(d+n')} \cap S[W] \bigr) \subset S[W]
\]%
are two elimination algebras of $\G^{(d+n)}$ and $\G^{(d+n')}$ over $S$ respectively.

Observe that $\mathcal{H} \subset S[W]$ and $\G_X \subset B[W]$, and that $S[W] \subset B[W]$ is a finite extension of Rees algebras because $B$ is finite over $S$. In the previous setting, we will also show that there is a commutative diagram of inclusions of Rees algebras, say
\[ \xymatrix@R=15pt{
	\G_X \ar@{^(->}[r] &
		\G_{X'} \\
	\mathcal{H} \ar@{^(->}[r] \ar@{^(->}[u] &
		\mathcal{H}', \ar@{^(->}[u]
} \]%
where both vertical arrows are finite (see Theorem~\ref{enteros_cociente} and Corollary~\ref{crl:extension_theorem} below). Thus $\G_X \subset \G_{X'}$ is finite if and only if $\mathcal{H} \subset \mathcal{H}'$ is so. These ideas will also be used in Section~\ref{construccion} to prove Theorem~\ref{rango_r}.
 
Note that, so far, all arguments are characteristic free. In addition, when the characteristic is zero, it turns out that $\mathcal{H}$ and $\mathcal{H}'$ are strongly linked to $\G^{(d+n)}$ and $\G^{(d+n')}$ respectively (see Remark~\ref{rmk:elim-alg-characteristic}~(2)). Therefore, in the case of characteristic zero, since $\G^{(d+n)}$ represents $F_s(X)$ and $\G^{(d+n')}$ represents $F_{rs}(X')$, it follows that $\beta : X' \to X$ is strongly transversal if and only if $\mathcal{H}$ and $\mathcal{H}'$ are weakly equivalent over $\Spec(S)$.
Moreover, since $\mathcal{H}$ and $\mathcal{H}'$ are differential and $\mathcal{H} \subset \mathcal{H}'$, to prove this latter statement it suffices to check that $\mathcal{H}'$ is integral over $\mathcal{H}$ or, equivalently, that $\overline{\mathcal{H}} = \overline{\mathcal{H}'}$ (see Theorem~\ref{thm:canonical-indiana}). In summary, when the characteristic is zero we have that
\begin{multline*}
 	\G_{X} \subset \G_{X'} \text{ is integral}
 	\Longleftrightarrow
 		\mathcal{H} \subset \mathcal{H}' \text{ is integral}
 	\Longleftrightarrow
	 	\mathcal{H} \text{ and } \mathcal{H}' \text{ are weakly equivalent}
	 	\Longleftrightarrow \\
	\Longleftrightarrow
 		F_s(X) \text{ and } F_{rs}(X') \text{ are strongly homeomorphic}
 	\Longleftrightarrow
 		\beta : X' \to X \text{ is strongly transversal.}
\end{multline*}
\end{remark}

In order to prove Theorem~\ref{thm:strong-homeo} we need two preliminary results. The next theorem concerns the elimination of one variable, whereas the Corollary concerns the elimination of several variables. Finally, the proof of Theorem~\ref{thm:strong-homeo} will be given in \ref{demo}.

\begin{theorem}[{cf. \cite[Theorem 4.11]{hpositive}}] \label{enteros_cociente}
Let $k$ be a perfect field, let $S$ be a $d$-dimensional smooth $k$-algebra, and  let $\G^{(d+1)} \subset S[Z][W]$ be a differential Rees algebra over $S[Z]$. Suppose that there is a  monic polynomial of degree $n$, say $f(Z) \in S[Z]$, such that $f(Z)W^n \in \G^{(d+1)}$. Set $B = S[Z] / \langle f(Z) \rangle$. Then the smooth morphism $\beta : \Spec(S[Z]) \to \Spec(S)$ is $\G^{(d+1)}$-admissible, and
\[
	\G^{(d)} = \G^{(d+1)} \cap S[W]
\]%
is an elimination algebra of $\G^{(d+1)}$. Moreover, there is an inclusion of Rees algebras, say $\G^{(d)} \subset \G^{(d+1)}_{|_{B}}$, which is a finite extension of graded algebras.
\end{theorem}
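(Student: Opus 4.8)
The plan is to treat the three assertions in turn: admissibility of $\beta$, the identification of $\G^{(d)}$ as an elimination algebra, and the finiteness of the inclusion $\G^{(d)}\subset\G^{(d+1)}_{|_B}$, which carries the real content. For admissibility, the hypothesis $f(Z)W^n\in\G^{(d+1)}$ with $f$ monic of degree $n$ forces $\Sing(\G^{(d+1)})\subseteq V(f)=\Spec(B)$, since $\nu_\xi(f)\ge n$ at every $\xi\in\Sing(\G^{(d+1)})$, and $\Spec(B)\to\Spec(S)$ is finite because $f$ is monic. Since $\G^{(d+1)}$ is differential and $f$ is monic, at each closed point $\xi\in\Sing(\G^{(d+1)})$ the initial form of $f$ contributes to the tangent ideal $\In_\xi(\G^{(d+1)})$ a form that, in an adapted coordinate, lies in the single variable $Z$; hence $\tau_{\G^{(d+1)},\xi}\ge 1$ with $Z$ eliminable, and by Lemma~\ref{existencia} together with the construction of admissible projections in \cite{hpositive}, the coordinate projection $\beta:\Spec(S[Z])\to\Spec(S)$ is $\G^{(d+1)}$-admissible. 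For the second assertion, $\G^{(d+1)}$ is absolutely differential, hence $\beta$-differential by Remark~\ref{elimination_abs}, so Definition~\ref{def:an-elimination-algebra} shows directly that $\G^{(d)}=\G^{(d+1)}\cap S[W]$ is an elimination algebra of $\G^{(d+1)}$.

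It remains to prove that $\G^{(d)}\subset\G^{(d+1)}_{|_B}$ is a finite extension. The inclusion itself is legitimate: $S\hookrightarrow B$ induces an injection $S[W]\hookrightarrow B[W]$ carrying $\G^{(d)}=\G^{(d+1)}\cap S[W]$ isomorphically onto a graded subalgebra of $\G^{(d+1)}_{|_B}$. To prove finiteness I would interpose the $S[Z]$-Rees algebra $\beta^*(\G^{(d)})[f(Z)W^n]\subset S[Z][W]$ generated by the pullback of $\G^{(d)}$ together with $f(Z)W^n$, and argue in three steps. First, $\G^{(d+1)}$ is module-finite over $\beta^*(\G^{(d)})[f(Z)W^n]$; this is the finiteness at the core of elimination theory (cf. \cite[Theorem~4.11]{hpositive}), the reason being that each homogeneous generator $gW^m$ of $\G^{(d+1)}$ satisfies a monic equation over $\beta^*(\G^{(d)})[f(Z)W^n]$ whose coefficients are the appropriately weighted symmetric functions that elimination places inside $\G^{(d)}$. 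Second, I restrict along $S[Z]\to B$: because $f\mapsto 0$, the image of $\beta^*(\G^{(d)})[f(Z)W^n]$ in $B[W]$ coincides with the image of $\beta^*(\G^{(d)})$, which is a graded quotient of $\G^{(d)}\otimes_S B$; finiteness being preserved by this surjection, $\G^{(d+1)}_{|_B}$ is finite over the image of $\beta^*(\G^{(d)})$. Third, since $B$ is a finite $S$-module, $\G^{(d)}\otimes_S B$, and hence its image in $B[W]$, is finite over $\G^{(d)}$. Composing the second and third steps gives that $\G^{(d+1)}_{|_B}$ is finite over $\G^{(d)}$.

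The main obstacle is the first step, the module-finiteness of $\G^{(d+1)}$ over $\beta^*(\G^{(d)})[f(Z)W^n]$: this is precisely where the differential hypothesis on $\G^{(d+1)}$ and the monicity of $f$ are indispensable, and where positive characteristic would obstruct any naive attempt to replace $f$ by a linear form. The correct mechanism does not linearize $f$; instead, suitable homogeneous powers of each generator, once the term $f(Z)W^n$ is available, are absorbed into $\beta^*(\G^{(d)})[f(Z)W^n]$, yielding the required integral dependence. I would either invoke \cite[Theorem~4.11]{hpositive} directly for this step or reproduce its weighted Noether-normalization argument specialized to the single variable $Z$ and weight $n$. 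The remaining ingredients, namely the reduction modulo $f$, preservation of finiteness under base change and quotients, and the finiteness of $B$ over $S$, are routine.
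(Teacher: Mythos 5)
The paper offers no proof of this statement at all: it is imported directly from \cite[Theorem~4.11]{hpositive} (note the ``cf.'' in the theorem header), which is also exactly the result your argument leans on for its one non-routine step, namely the module-finiteness of $\G^{(d+1)}$ over $\beta^*(\G^{(d)})[f(Z)W^n]$. Your reduction of the finiteness of $\G^{(d)} \subset \G^{(d+1)}_{|_{B}}$ to that citation --- restrict along $S[Z] \to B$ so that $f(Z)W^n$ dies, observe that finiteness survives the surjection, and then use that $B$ is module-finite over $S$ --- is correct, and the admissibility and elimination-algebra assertions are handled just as the surrounding machinery of the paper (Lemma~\ref{existencia}, Remark~\ref{elimination_abs}, Definition~\ref{def:an-elimination-algebra}) intends, so your write-up is in effect a sound unpacking of the very reference the authors substitute for a proof.
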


\begin{corollary} \label{crl:extension_theorem}
Let $k$ be a perfect field, let $S$ be a smooth $k$-algebra of dimension $d$. Let $Z_1, \dotsc, Z_h$ denote variables and, for $i = 1, \dotsc, h$, let $f_i(Z_i) \in S[Z_i]$ be a monic polynomial of degree $l_i$, and set
\[
	B
	:= S[Z_1,\cdots,Z_h] / \langle f_1(Z_1), \cdots, f_h(Z_h) \rangle .
\]%
Let $\G^{(d+h)}$ be a differential Rees algebra over $S[Z_1,\ldots,Z_h]$ containing $f_1(Z_1)W^{l_1}, \ldots, f_h(Z_h)W^{l_h}$. Then the natural morphism $\text{Spec}(S[Z_1\ldots,,Z_h])\to  \text{Spec}(S)$ is ${\mathcal G}^{(d+h)}$-admissible and  
\[
	\G^{(d)} := \G^{(d+h)} \cap S[W].
\]%
is an elimination algebra of  $\G^{(d+h)}$. Furthermore, there is an inclusion of Rees algebras, say
\begin{equation} \label{eq:crl:extension_theorem_claim_1}
	\G^{(d)}
	\subset \G_{|_B}^{(d+h)} ,
\end{equation}
that is finite. Moreover, as a consequence, there is another inclusion of Rees algebras over $S$, say
\begin{equation} \label{eq:crl:extension_theorem_claim_2}
	\G^{(d)}
	\subset \left (\G^{(d+h)}_{|_B} \cap S[W] \right ),
\end{equation}%
which is also finite.
\end{corollary}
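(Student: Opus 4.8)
The plan is to prove the statement by induction on the number $h$ of variables, peeling off one variable at a time and invoking Theorem~\ref{enteros_cociente} at each stage. The base case $h=1$ is exactly Theorem~\ref{enteros_cociente}: it gives the admissibility of $\Spec(S[Z_1])\to\Spec(S)$, the fact that $\G^{(d)}=\G^{(d+1)}\cap S[W]$ is an elimination algebra, and the finite inclusion \eqref{eq:crl:extension_theorem_claim_1}. So assume the result for $h-1$.

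For the inductive step I would write $S_{h-1}=S[Z_1,\dotsc,Z_{h-1}]$, a smooth $k$-algebra of dimension $d+h-1$, and regard $S[Z_1,\dotsc,Z_h]=S_{h-1}[Z_h]$. Since $\G^{(d+h)}$ is differential and contains the monic $f_h(Z_h)W^{l_h}$, Theorem~\ref{enteros_cociente} applies with base ring $S_{h-1}$ and single variable $Z_h$: the projection $\Spec(S_{h-1}[Z_h])\to\Spec(S_{h-1})$ is $\G^{(d+h)}$-admissible, the algebra $\G^{(d+h-1)}:=\G^{(d+h)}\cap S_{h-1}[W]$ is an elimination algebra, and there is a finite inclusion $\G^{(d+h-1)}\subset\G^{(d+h)}_{|_{C}}$, where $C=S_{h-1}[Z_h]/\langle f_h(Z_h)\rangle$. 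The observation that keeps the induction running is that, by Theorem~\ref{crl:elim-alg-without-closure}, $\G^{(d+h-1)}$ is again a differential Rees algebra over $S_{h-1}$, and it still contains $f_1(Z_1)W^{l_1},\dotsc,f_{h-1}(Z_{h-1})W^{l_{h-1}}$, since these elements lie in $\G^{(d+h)}\cap S_{h-1}[W]$. Hence the inductive hypothesis applies to $\G^{(d+h-1)}$ over $S_{h-1}$ with variables $Z_1,\dotsc,Z_{h-1}$ and $B_{h-1}=S_{h-1}/\langle f_1,\dotsc,f_{h-1}\rangle$, yielding admissibility of $\Spec(S_{h-1})\to\Spec(S)$, the identity $\G^{(d+h-1)}\cap S[W]=\G^{(d+h)}\cap S[W]=\G^{(d)}$, and a finite inclusion $\G^{(d)}\subset(\G^{(d+h-1)})_{|_{B_{h-1}}}$. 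Admissibility and the elimination property for the composite $\Spec(S[Z_1,\dotsc,Z_h])\to\Spec(S)$ then follow from the transitivity of elimination discussed in Section~\ref{elimination1}.

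It remains to chain the two finiteness statements to reach \eqref{eq:crl:extension_theorem_claim_1} over $B$ itself. Note that $B=C/\langle f_1,\dotsc,f_{h-1}\rangle$ is a quotient of $C$, so restricting the finite inclusion $\G^{(d+h-1)}\subset\G^{(d+h)}_{|_{C}}$ along $C\twoheadrightarrow B$ gives a finite inclusion $(\G^{(d+h-1)})_{|_{B}}\subset\G^{(d+h)}_{|_{B}}$, because the image of a module-finite extension under a ring homomorphism is module-finite over the image of the subring. Because $B_{h-1}\hookrightarrow B$ is injective (indeed $B$ is free of rank $l_h$ over $B_{h-1}$), restriction to $B_{h-1}$ and restriction to $B$ agree on $\G^{(d+h-1)}\subset S_{h-1}[W]$, so the inclusion $\G^{(d)}\subset(\G^{(d+h-1)})_{|_{B_{h-1}}}$ may be read as $\G^{(d)}\subset(\G^{(d+h-1)})_{|_{B}}$. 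Composing the two finite inclusions yields $\G^{(d)}\subset\G^{(d+h)}_{|_{B}}$ finite, which is \eqref{eq:crl:extension_theorem_claim_1}. Finally, \eqref{eq:crl:extension_theorem_claim_2} follows formally: viewing $S[W]\subset B[W]$ through $S\hookrightarrow B$, one has $\G^{(d)}=\G^{(d+h)}\cap S[W]\subset\bigl(\G^{(d+h)}_{|_B}\cap S[W]\bigr)\subset\G^{(d+h)}_{|_B}$, and since $\G^{(d+h)}_{|_B}$ is Noetherian and module-finite over $\G^{(d)}$, the intermediate subring $\G^{(d+h)}_{|_B}\cap S[W]$ is a $\G^{(d)}$-submodule of a finite $\G^{(d)}$-module, hence finitely generated.

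The main obstacle I anticipate is not a single deep step but the bookkeeping in the inductive step: one must check that after eliminating $Z_h$ the hypotheses of the Corollary are genuinely restored, namely differentiality (via Theorem~\ref{crl:elim-alg-without-closure}) and the survival of the remaining monic generators, and one must track restrictions to the three different quotient rings $C$, $B_{h-1}$ and $B$ so that the two finiteness statements can be composed correctly. The identification of the restrictions to $B_{h-1}$ and to $B$, together with the preservation of module-finiteness under $C\twoheadrightarrow B$, are the points where the argument must be stated with care.
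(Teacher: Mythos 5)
Your proposal is correct and follows essentially the same route as the paper: induction on the number of variables (the paper writes out the case $h=2$ and notes the general case is inductive), peeling off one variable via Theorem~\ref{enteros_cociente}, using Corollary~\ref{crl:elim-alg-without-closure} to restore differentiability of the intermediate elimination algebra, and chaining the two finite inclusions through the intermediate quotient rings exactly as in the paper's diagram with $B_2$, $B_1$ and $B$. Your derivation of \eqref{eq:crl:extension_theorem_claim_2} as a Noetherian intermediate-submodule argument is also the same observation the paper makes via its chain $\G^{(d)}\subset\bigl(\G^{(d)}_{|_B}\cap S[W]\bigr)\subset\dotsb\subset\bigl(\G^{(d+h)}_{|_B}\cap S[W]\bigr)$.
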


\begin{proof}
The result follows from the following observation by an inductive argument. Suppose that $h=2$. Recall that $\G^{(d+2)}$ is a differential algebra containing $f_1(Z_1) W^{l_1}$ and $f_2(Z_2) W^{l_2}$. Then one can check that $\tau_{{\mathcal G^{(d+2)}}}\geq 2$ at all $\xi\in \Sing {\mathcal G}^{(d+2)}$ and that $\text{Spec} (S[Z_1,Z_2])\to \text{Spec} (S[Z_1])$ is ${\mathcal G}^{(d+2)}$-admissible (see \cite[\S8]{BrV}). Set $\G^{(d+1)} = \G^{(d+2)} \cap S[Z_1][W]$, and consider the diagram
$$\xymatrix@R=15pt{ \G^{(d+2)} & S[Z_1,Z_2] \ar[r] &    B_2= S[Z_1][Z_2]/\langle f_2(Z_2)\rangle \ar[r] & B=S[Z_1,Z_2]/\langle f_1(Z_1), f_2(Z_2)\rangle \\ 
	 \G^{(d+1)} & S[Z_1] \ar[u] \ar[ur] \ar[r] & B_1=S[Z_1]/\langle f_1(Z_1)\rangle \ar[ur] &  \\
	\G^{(d)} & S \ar[u]  \ar[ur] & & }$$
Recall that, by Corollary~\ref{crl:elim-alg-without-closure}, $\G^{(d+1)}$ is differential, and it is an elimination algebra of $\G^{(d+2)}$ over $S[Z_1]$. Similarly, $\G^{(d)}=\G^{(d+1)}\cap S$ is also an elimination algebra of $\G^{(d+1)}$ over $S$ (again, it can be checked that $\text{Spec} (S[Z_1])\to \text{Spec} (S)$ is ${\mathcal G}^{(d+1)}$-admissible and  Corollary~\ref{crl:elim-alg-without-closure} applies). Now \eqref{eq:crl:extension_theorem_claim_1} follows from Theorem \ref{enteros_cociente}, as we have that:
\begin{itemize}
\item There is an inclusion $\G^{(d+1)} \subset \G^{(d+2)}_{|_{B_2}}$ that is finite, hence  $\G^{(d+1)}_{|_{B_1}} \subset \G^{(d+2)}_{|_{B}}$ is also finite; 

\item There is an inclusion, 
$\G^{(d)}\subset \G_{|_{B_1}}^{(d+1)}$ 
that is finite.
\end{itemize}%
To conclude, observe that, since the extension $S \subset B$ is finite, all the following extensions are finite:
$$ \G^{(d)}
	\subset \left ( \G^{(d)}_{|_{B}} \cap S[W] \right )
	\subset \left ( \G^{(d+1)}_{|_{B}} \cap S[W] \right )
	\subset \left (\G^{(d+2)}_{|_{B}} \cap S[W] \right ) .$$
This proves \eqref{eq:crl:extension_theorem_claim_2}.
\end{proof}

\begin{parrafo} \label{demo}
\begin{proof}[\bf Proof of Theorem~\ref{thm:strong-homeo}]

It suffices to prove the theorem in the affine case. Hence we may assume that $X = \Spec(B)$, $X' = \Spec(B')$, and that $\beta : X' \to X$ is the morphism induced by  a finite inclusion $B \subset B'$ (see   Remark~\ref{rmk:proof:strong-homeo}). Set $d = \dim B = \dim B'$.  

After replacing $B$ by some \'etale extension  we may assume that there is a finite inclusion $S \subset B$ with $S$ smooth as in \ref{Main_Ideas_Mult}. This \'etale extension of $B$ induces an \'etale extension of $B'$ which we will denote by $B'$ again, and the induced finite inclusion $S \subset B'$ is in the same setting as that of \ref{Main_Ideas_Mult}. Observe, that, by the hypotheses, $F_{rs}(B')$ is homeomorphic to its image by $\beta$, which necessarily sits inside $F_s(B)$. Then there is a commutative diagram of 
inclusions and finite morphisms, 
$$
\xymatrix@R=15pt{ \G^{(d+n')} & C'=S[Z_1,\ldots,Z_n,Z_{n+1},\ldots, Z_{n'}] \ar[r] & B' = S[\theta_1,\ldots, \theta_n, 
\theta_{n+1},\ldots,\theta_{n'}] \\
 \G^{(d+n)} & C=S[Z_1,\ldots,Z_n] \ar[u] \ar[r] & B=S[\theta_1,\ldots, \theta_n]  \ar[u]   \\
 &   S \ar[ur] \ar[u] }
$$
where each monic polynomial $f_i(Z_i)\in S[Z_i]$ of degree $l_i$ is the minimum polynomial of $\theta_i$ over $S$, for $i = 1, \ldots, n, \ldots, n'$, as in \ref{Main_Ideas_Mult}. Let $\G^{(d+n)}$ be the differential $C$-Rees algebra generated by $f_1(Z_1) W^{l_1}, \dotsc, f_n(Z_n) W^{l_n}$, and $\G^{(d+n')}$ the differential $C'$-Rees algebra generated by
\[
	f_1(Z_1)W^{l_1}, \dotsc, f_n(Z_n)W^{l_n},
	f_{n+1}(Z_{n+1})W^{l_{n+1}} \dotsc,f_{n'}(Z_{n'})W^{l_{n'}} .
\]%
Then $\G^{(d+n)}$ represents $F_s(B)$ in $\Spec(C)$, and $\G^{(d+n')}$ represents $F_{rs}(B')$ in $\text{Spec}(C')$. By definition, $\G_B$ is the integral closure in $B[W]$ of the restriction of $\G^{(d+n)}$ to $B$, say $\G^{(d+n)}_{|_B}$, while $\G_{B'}$ is the integral closure in $B'$  of the restriction of $\G^{(d+n')}$  to $B'$, say  $\G^{(d+n')}_{|_{B'}}$ (see Definition~\ref{def:algebra_restringida_b}).

Next we prove (1). Suppose that $\beta$ is strongly transversal. In such case $\G^{(d+n')}$ is strongly linked to $\G^{(d+n)}$, and hence $\G^{(d+n)}$ is an elimination algebra of $\G^{(d+n')}$ (see Remark~\ref{rmk:elim-alg-characteristic}~(1)). Set
$$ D = S[Z_1,\ldots,Z_n,Z_{n+1},\ldots, Z_{n'}] / \langle f_{n+1}(Z_{n+1}), \ldots, f_{n'}(Z_{n'}) \rangle.$$   
The algebra $D$ fits into the previous diagram as follows: 
\[
\xymatrix @R=15pt{
	\G^{(d+n')} & C' \ar[r] & D \ar[r] & B' \\
	\G^{(d+n)} & C \ar[u] \ar[rr] \ar[ru] & \hskip6em & B \ar[u] \\
} \]%
By Corollary \ref{crl:extension_theorem} there is an inclusion $\G^{(d+n)} \subset \G^{(d+n')}_{|_{D}}$  which is finite. Therefore, the inclusion $\G^{(d+n)} \subset \G^{(d+n')}_{|_{B'}}$ is finite, as $B'$ is a quotient of $D$. Thus (1) follows because $\G_B$ is the integral closure of $ \G^{(d+n)}_{|_{B}}$ in $B[W]$, and $\G_{B'}$ is the integral closure of $ \G^{(d+n')}_{|_{B'}}$ in $B'[W]$.

To prove (2) we will restrict to characteristic zero. Assume that $\G^{(d+n)}_{|_{B}} \subset \G^{(d+n')}_{|_{B'}}$ is a finite extension of Rees algebras. Set
\[
	\mathcal{H} = \G^{(d+n)} \cap S[W] ,
	\quad \text{and} \quad
	\mathcal{H}' = \G^{(d+n')} \cap S[W].
\]%
By Corollary~\ref{crl:elim-alg-without-closure}, $\mathcal{H}$ and $\mathcal{H}'$ are two elimination algebras of $\G^{(d+n)}$ and $\G^{(d+n')}$ over $S$ respectively. Since we are assuming characteristic zero, Remark~\ref{rmk:elim-alg-characteristic}~(2) says that $\mathcal{H}$ is strongly linked to $\G^{(d+n)}$, and $\mathcal{H}'$ is strongly linked to $\G^{(d+n')}$. Thus, in order to prove that $\beta : X' \to X$ is strongly transversal, it suffices to show that $\mathcal{H}$ and $\mathcal{H}'$ are weakly equivalent (see Remark~\ref{rmk:proof:strong-homeo}). To this end, observe that we have a commutative diagram of inclusions of Rees algebras as follows:
\[ \xymatrix@R=15pt{
	\G^{(d+n)}_{|_{B}} \ar@{^{(}->}[r] &
		\G^{(d+n')}_{|_{B'}} \\
	\mathcal{H} \ar@{^{(}->}[r] \ar@{^{(}->}[u] &
		\mathcal{H}'. \ar@{^{(}->}[u]
} \]%
The vertical arrows are finite by Corollary~\ref{crl:extension_theorem}, and the top horizontal arrow is finite by hypothesis. Hence $\G^{(d+n')}_{|_{B'}}$ is finite over $\mathcal{H}$, and therefore $\mathcal{H}'$ is also finite over $\mathcal{H}$. This latter condition shows that $\mathcal{H}$ and $\mathcal{H}'$ are weakly equivalent and, as a consequence, that $\beta : X' \to X$ is strongly transversal.
\end{proof}
\end{parrafo}

\begin{remark} \label{equivalent_resolutions}
Assume that the characteristic is zero, so that one can apply an algorithm of resolution of Rees algebras (see Definition~\ref{local_V}). A feature of algorithmic resolution of algebras is that it does not distinguish between weakly equivalent algebras (see Definition~\ref{def_weak_equiv}). Namely, if $\mathcal{H}$ and $\mathcal{H}'$ are two weakly equivalent algebras over a regular variety $V^{(d)}$ of characteristic zero, then the algorithm produces the same resolution for $\mathcal{H}$ and $\mathcal{H}'$.

Consider a strongly transversal morphism of singular varieties of characteristic zero, say  $\beta : X' \to X$, with $s = \max\mult(X)$ and $rs = \max\mult(X')$ (see Definition~\ref{def:strongly-transversal}). Fix the setting as in Remark~\ref{rmk:proof:strong-homeo}. Since the characteristic is zero, the simplification of $F_s(X)$ and $F_{rs}(X')$ can be reduced to the resolution of two Rees algebras, say $\mathcal{H}$ and $\mathcal{H}'$ respectively, over a regular ambient space $\Spec(S)$. Since $\beta : X' \to X$ is strongly transversal, $\mathcal{H}$ and $\mathcal{H}'$ turn out to be weakly equivalent. Thus one can see that the algorithmic resolution of $F_s(X)$ induces that of $F_{rs}(X')$, and vice versa.
\end{remark}


\section{On the construction of strongly transversal morphisms}\label{construccion}

In the previous sections we have studied conditions under which a finite morphism of singular varieties, say $\beta : X' \to X$, is strongly transversal. Along this section we proceed the other way around. Namely, we start with a singular variety $X$ with field of fractions $K$ and, given a finite field extension $L / K$, we ask whether there exists a singular variety $X'$ with field of fractions $L$ endowed with a strongly transversal morphism $\beta : X' \to X$. We do not know how to construct such $X'$ in general. However, as we will explain in the following lines, we can achieve such result locally in \'etale topology. The main result of this section is Theorem~\ref{rango_r}.


\begin{remark}
Let $X$ be a normal variety with field of fractions $K$ and maximum multiplicity $s \geq 2$. For a fixed point $\xi \in F_s(X)$, consider an \'etale neighborhood of $X$ at $\xi$, say $\widetilde{X} \to X$. Note that $\widetilde{X}$ might not be irreducible. However, as $X$ is normal, $\widetilde{X}$ should be a disjoint union of irreducible components, each one being normal (see \cite[Proposition~VII.2.2, p.~75]{Raynaud}). Thus, after replacing $\widetilde{X}$ by one of its irreducible components, we may assume that $\widetilde{X}$ is irreducible. Note also that, if $L$ is a finite extension of $K$ of rank $r$ and $\widetilde{K}$ denotes the field of fractions of $\widetilde{X}$, then $\widetilde{L} = L \otimes_K \widetilde{K}$ is a finite extension of $\widetilde{K}$ of generic rank $r$.
\end{remark}


\begin{theorem} \label{rango_r}
Let $X$ be a singular variety over a perfect field $k$ with maximum multiplicity $s \geq 2$. Suppose that $X$ is normal with field of fractions $K$ and let $L$ be a finite field extension of $K$ of rank $r$. Then, for every point of  $\xi \in F_s(X)$, there exist an \'etale neighborhood of $X$ at $\xi$, say $\widetilde{X} \to X$, together with a scheme $\widetilde{X}'$ and a strongly transversal morphism $\beta : \widetilde{X}' \to \widetilde{X}$ of generic rank $r$ such that $\widetilde{L} = L \otimes_K \widetilde{K}$, where $\widetilde{K}$ and $\widetilde{L}$ denote the total quotient rings of $\widetilde{X}$ and $\widetilde{X}'$ respectively.
\end{theorem}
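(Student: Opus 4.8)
The plan is to reduce the statement to a local construction over a regular base and then to run, in reverse, the presentation machinery of \ref{Main_Ideas_Mult} and \ref{rmk:proof:strong-homeo}. First I would fix $\xi \in F_s(X)$ and pass to an affine \'etale neighbourhood $\widetilde{X} = \Spec(B)$; since $X$ is normal this neighbourhood may be taken irreducible, so $B$ is a domain with field of fractions $\widetilde{K}$. As in \ref{Main_Ideas_Mult}, after shrinking $\widetilde{X}$ I may assume there is a regular $k$-subalgebra $S \subset B$ with $B$ finite over $S$ of generic rank $s$ and with condition (*) of \ref{condicion_estrella} holding at $\xi$; thus the finite projection $\widetilde{X} \to Z := \Spec(S)$ is transversal and realises $\max\mult(\widetilde{X}) = s$. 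Writing $K_0$ for the field of fractions of $S$ we have $[\widetilde{K}:K_0] = s$, and the base change $\widetilde{L} = L \otimes_K \widetilde{K}$ is a finite reduced $\widetilde{K}$-algebra of rank $r$, hence a product of finite field extensions, with $[\widetilde{L}:K_0] = rs$.

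The heart of the argument is the construction of $\widetilde{X}'$. I would produce a finite extension $S \subset B'$ with $B \subset B'$, total ring of fractions $\widetilde{L}$, and generic rank $rs$ over $S$, by adjoining to $B$ generators of $\widetilde{L}$ over $\widetilde{K}$ made integral over $S$ after clearing denominators, and then replacing the resulting semilocal algebra by a subalgebra that is local over $\q := \xi \cap S$. The point of this last adjustment is to force condition (*) at the distinguished point $\xi'$ over $\xi$: uniqueness of $\xi'$ over $\q$, equality of residue fields $\kappa(\q) = \kappa(\xi')$, and the reduction condition (iii) of \ref{condicion_estrella}. Granting (*), Zariski's formula (Theorem~\ref{MultForm}) together with additivity of the multiplicity over the components of $\widetilde{X}'$ through $\xi'$ gives $e_{B'_{\xi'}} = [\widetilde{L}:K_0] = rs$, so that $\widetilde{\beta} : \widetilde{X}' \to \widetilde{X}$ is transversal of generic rank $r$ in the sense of Definition~\ref{def:transversal}. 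Next, following \ref{Main_Ideas_Mult} I would build local presentations $(V^{(d+n)}, \G^{(d+n)})$ of $F_s(\widetilde{X})$ and $(V^{(d+n')}, \G^{(d+n')})$ of $F_{rs}(\widetilde{X}')$ sitting over the same base $Z$, arranging the generators of $B'$ so that both projections to $Z$ are admissible and so that the elimination algebras $\mathcal{H} = \G^{(d+n)} \cap S[W]$ and $\mathcal{H}' = \G^{(d+n')} \cap S[W]$ coincide up to integral closure; the tools are the several-variable elimination of Corollary~\ref{crl:extension_theorem} and Theorem~\ref{enteros_cociente}, which also yield that the inclusions $\mathcal{H} \subset \G^{(d+n)}_{|_{B}}$ and $\mathcal{H}' \subset \G^{(d+n')}_{|_{B'}}$ are finite.

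Finally I would deduce strong transversality. In characteristic zero this follows from the reasoning in Remark~\ref{rmk:proof:strong-homeo}: since $\mathcal{H}$ and $\mathcal{H}'$ are strongly linked to $\G^{(d+n)}$ and $\G^{(d+n')}$ respectively (Remark~\ref{rmk:elim-alg-characteristic}) and are weakly equivalent, the strata $F_s(\widetilde{X})$ and $F_{rs}(\widetilde{X}')$ are both strongly homeomorphic to $\Sing(\mathcal{H})$, hence strongly homeomorphic to each other via $\widetilde{\beta}$, which by Definition~\ref{def:strongly-transversal} is exactly strong transversality.

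The main obstacle is the positive-characteristic case, where matching elimination algebras is no longer enough: Example~\ref{ex:elim-not-strongly} shows that an elimination algebra need not be strongly linked, and Example~\ref{contraejemplo} shows that stratum-surjectivity can be destroyed by further blow ups even when $\G_X \subset \G_{X'}$ is finite. To circumvent this I would not argue through the base, but verify the criterion of Remark~\ref{rmk:strongly-transversal-equiv-surjective} directly: it suffices to choose the generators adjoined to $B$ so that the ramification producing the jump from $s$ to $rs$ is uniform along all of $F_s(\widetilde{X})$, i.e. so that $F_{rs}(\widetilde{X}') = \widetilde{\beta}^{-1}(F_s(\widetilde{X}))$ with $\widetilde{\beta}$ a homeomorphism between them, and then to show this equality is stable under every $F_s$-permissible blow up. The stability is where the real work lies: one blows up a center $Y \subset F_s(\widetilde{X})$ and its regular preimage $Y' = \widetilde{\beta}^{-1}(Y) \cap F_{rs}(\widetilde{X}')$ (regular by \ref{resumen}), invokes Theorem~\ref{thm:Frs-sequence} to obtain $\widetilde{\beta}_1 : \widetilde{X}'_1 \to \widetilde{X}_1$ transversal, and must exclude the phenomenon of Example~\ref{contraejemplo}, namely that some point of $F_s(\widetilde{X}_1)$ has no point of multiplicity $rs$ above it. I expect this to reduce, via the admissible projection to $Z$ and the compatibility of elimination with permissible transformations (\ref{eliminacion_blow_ups}), to the assertion that the chosen integral generators stay totally ramified over the transforms of $\Sing(\mathcal{H})$; securing this uniformity, by a careful choice of the model $B'$ and possibly a further \'etale shrinking of $\widetilde{X}$, is the crux of the proof.
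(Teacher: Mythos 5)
There is a genuine gap, and you have in fact located it yourself: you end by saying that securing uniformity of the ramification along $F_s(\widetilde{X})$ and its stability under every permissible blow up ``is the crux of the proof,'' but you never supply the construction that achieves it. Your recipe for $B'$ --- adjoin generators of $\widetilde{L}$ over $\widetilde{K}$, clear denominators to make them integral over $S$, then adjust locally to force condition (*) at one point $\xi'$ over $\xi$ --- at best produces transversality at that single point; it gives no control over the rest of $F_{rs}(\widetilde{X}')$, no surjectivity onto $F_s(\widetilde{X})$, and no mechanism for propagating anything through a sequence of blow ups. In positive characteristic your fallback through elimination algebras is explicitly blocked by Example~\ref{ex:elim-not-strongly} and Example~\ref{contraejemplo}, as you note, so the theorem (which is stated over an arbitrary perfect field) is not proved.

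The idea the paper uses, and which your proposal is missing, is a characteristic-free choice of the elements to adjoin: let $\overline{B}$ be the integral closure of $B$ in $L$, take the integral closure $\overline{\G_B} = \bigoplus_i \widetilde{J}_i W^i$ of the intrinsic algebra $\G_B$ inside $\overline{B}[W]$, and set $B' = B[\widetilde{J}_1]$, i.e.\ adjoin generators $\theta_1, \dotsc, \theta_m$ of the \emph{degree-one piece} of that integral closure. The whole point is that the condition ``$\theta_i W$ is integral over $\G_B$'' is exactly what Propositions~\ref{prop:transversality-check} and~\ref{prop:strong-transversality-check} feed on: an integral dependence relation $\theta_i^N + a_1\theta_i^{N-1} + \dotsb + a_N = 0$ with $a_jW^j \in \mathcal{H} = \G_B \cap S[W]$ forces every prime of $B'$ over a point of $F_s(B)$ to have multiplicity $rs$ (giving transversality and the homeomorphism $F_{rs}(B') \cong F_s(B)$), and after a permissible blow up the elements $\theta_i/x_j$ satisfy the analogous relation over the transform $\mathcal{H}_1 \subset \G_{B_1}$ (Lemma~\ref{lm:elim-GB-blowup}), so the same criterion persists along every $F_s$-local sequence. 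This is what replaces your appeal to weak equivalence of elimination algebras and works uniformly in all characteristics; without it, your argument does not close.
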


\begin{remark}\label{justificacion}  
The condition of normality on $X$ warranties that the irreducible components of an \'etale neighborhood of $X$ are disjoint. In general, an \'etale neighborhood of $X$, say $\widetilde{X}$, may not be irreducible. In such case, it is not possible to construct a strongly transversal morphism $\beta : \widetilde{X}' \to \widetilde{X}$, as transversal and strongly transversal morphisms have been defined just for the case in which $\widetilde{X}$ is a variety.

It is possible to extend the notions of transversal morphism and strongly transversal morphism to include the case in which $\widetilde{X}$ is an equidimensional scheme of finite type over a perfect field $k$. In this way, one could also generalize the previous theorem to the case in which $X$ is non-necessarily normal. These generalizations would not involve new ideas. However, they would require to introduce more notation and the the proof of Theorem~\ref{rango_r} would be more tricky. Thus, for the shake of simplicity and readability, we have chosen to keep the definitions of transversal morphism and strongly transversal morphism and Theorem~\ref{rango_r} as they are.

Nevertheless note that, from the point of view of resolution of singularities, there is no loss of generality on replacing a variety $X$ by its normalization, say $\overline{X}$, as any resolution of singularities of $X$ should factor through $\overline{X}$.
\end{remark}

\begin{proof}
After replacing $X$ by a suitable \'etale neighborhood, assume that $X$ is an affine variety, say $X = \Spec(B)$, whose intrinsic algebra is given by $\G_X = \G_B = \bigoplus_i J_i W^i \subset B[W]$. Let $\overline{B}$ denote the integral closure of the domain $B$ in $L$, and consider the integral closure of $\G_B$ in $\overline{B}[W]$, say
\[
	\overline{\G_B}
	= \overline{B} \oplus \widetilde{J}_1 W
		\oplus \widetilde{J}_2 W^2 \oplus \dotsb
	= \bigoplus_i \widetilde{J}_i W^i
	\subset \overline{B}[W].
\]%
Note that the integral closure of the extended ideal $J_i \overline{B}$ must be contained in the ideal $\widetilde{J}_i \subset \overline{B}$ but, in general, this inclusion is strict (see Remark~\ref{rmk:int-closure-Rees}).

As $\widetilde{J}_1$ is an ideal of $\overline{B}$ and $\overline{B}$ is finite  over $B$, the ideal $\widetilde{J}_1$ can also be regarded as a finite module over $B$. Fix a family of generators of  $\widetilde{J}_1$ regarded as a $B$-module, say $\widetilde{J}_1 = \langle \theta_1, \dotsc, \theta_m \rangle$. Set $B' = B[\theta_1, \dotsc, \theta_m] = B \bigl[ \widetilde{J}_1 \bigr] \subset L$. We claim that $X' = \Spec(B')$ is a singular variety with field of fractions $L$, and that the induced morphism $X' \to X$ is strongly transversal.

To prove the claim, first note that $\widetilde{J}_1$ is a non-zero ideal in $\overline{B}$, which is contained in $B'$. Hence $B'$ has $L$ as its field of fractions. On the other hand observe that, by construction, the elements $\theta_1 W, \dotsc, \theta_m W \in B'[W]$ are integral over $\G_B$. Then the strong transversality will follow from Proposition~\ref{prop:transversality-check} and Proposition~\ref{prop:strong-transversality-check} below.
\end{proof}

\begin{remark} \label{rmk:int-closure-Rees}
Let $B \subset B'$ be a finite extension of domains. Consider a Rees algebra over $B$, say
$\G = \bigoplus_{i \in \mathbb{N}} J_i W^i \subset B[W]$,
and let
$\overline{\G} = \bigoplus_{i \in \mathbb{N}} \widetilde{J}_i W^i \subset B'[W]$
denote the integral closure of $\G$ in $B'[W]$. 
Note that the integral closure of $J_i B'$ in $B'$ is contained in $\widetilde{J}_i$ for all $i$. However, in general, this inclusion is strict (see Example~\ref{ejemplo} below).
\end{remark}

\begin{example}\label{ejemplo}

\newcommand{\x}{\bar{x}}
\newcommand{\y}{\bar{y}}
\newcommand{\z}{\bar{z}}

Let $k$ be a field of characteristic zero, and consider the curve $X = \Spec(B)$ given by $B = k[x,y] / \langle y^3 + x^3 y + x^7 \rangle$. Let us denote by $\x$ and $\y$ the residue classes of $x$ and $y$ in $B$ respectively. Note that $X$ has maximum multiplicity $3$, and this value is reached at the origin. Following the notation of the proof, set $S = k[\x]$ and $B = S[\y]$. The maximum multiplicity locus of $X \subset \Spec(S[T])$ is represented by the Rees algebra
\[
	\G
	= S[T][ (T^3 + \x^3 T + \x^7) W^3 ]
	\subset S[T][W] ,
\]%
and a simple computation shows that $\Diff(\G) = S[T][TW, \x^2 W, \x^3 W^2]$. Therefore, the Rees algebra $\mathcal{H} = \Diff(\G) \cap S[W]$ is given by $\mathcal{H} = S[\x^2 W, \x^3 W^2] \subset S[W]$ and, in this case, $J_1 = \langle \x^2 \rangle \subset S$.

Next, consider the field extension $L = K[z] / \langle z^2 - x^3 \rangle$, where $K$ represents the field of fractions of $B$. Let $\overline{\mathcal{H}} = \bigoplus_l \tilde{J}_l W^l$ denote the integral closure of $\mathcal{H}$ in $L[W]$. Since
\[
	(\z W)^2 - \x^3 W^2 = (\z^2 - \x^3) W^2 = 0,
\]%
with $\x^3 W^2 \in J_2$, it follows that $\z W$ is integral over $\mathcal{H}$. That is, $\z \in \tilde{J}_1$. However, we claim that $\z$ is not integral over $J_1 \overline{S}$.
Indeed, according to \cite[Appendix~4, Theorem~1, p.~350]{ZS}, $\z$ is integral over $J_1 \overline{S}$ if and only if, for every valuation ring $R \subset L$ so that $S \subset R$, one has that $\z \in J_1 R$. Thus, in order to prove the claim it suffices to find a valuation ring $R \subset L$ so that $\z \notin J_1 R$. To this end, consider the subalgebra $k[\frac{\z}{\x}] \subset L$ (note that $k[\frac{\z}{\x}]$ is the ring obtained by blowing-up $k[\x,\z]$ along $\langle \x,\z \rangle$). Observe that $(\frac{\z}{\x})^2 = \x$ in $L$. Hence $S \subset k[\frac{\z}{\x}]$. Set $R_0 = k[\frac{\z}{\x}]_{\langle \frac{\z}{\x} \rangle}$, and let $L_0 \subset L$ denote its field of fractions. As $R_0$ is a regular Noetherian local ring of dimension $1$, it is a discrete valuation ring with parameter $\frac{\z}{\x}$. In this way, since $\z = \x \cdot \frac{\z}{\x} = (\frac{\z}{\x})^3$, one readily checks that
\begin{equation}  \label{eq:ultimo-ejemplo-J1R0}
	\z
	\notin J_1 R_0
	= \left\langle \x^2 \right\rangle R_0
	= \left\langle \frac{\z}{\x} \right\rangle^4 R_0.
\end{equation}%
By \cite[Theorem~VI.5, p.~12]{ZS}, $R_0$ can be extended to a valuation ring in $L$, say $R \subset L$, with parameter $\frac{\z}{\x}$. Thus \eqref{eq:ultimo-ejemplo-J1R0} implies that $\z \notin J_1 R$, and therefore $\z$ is not integral $J_1 \overline{S}$.
\end{example}

\begin{lemma} \label{lm:elim-GB}
Let $B$ be a singular domain of finite type over a perfect field $k$ with maximum multiplicity $s \geq 2$. Let $\G_B \subset B[W]$ denote the intrinsic algebra attached to $F_s(B)$. Consider a regular subalgebra of $B$, say $S$, so that the extension $S \subset B$ is finite of generic rank $s = \max\mult(B)$. That is, so that the induced morphism $\beta : \Spec(B) \to \Spec(S)$ is transversal. Then the $S$-Rees algebra $\mathcal{H} := \G_B \cap S[W]$ has the following properties:
\begin{enumerate}
\item The extension $\mathcal{H} \subset \G_B$ is finite.
\item The morphism $\beta$ maps $F_s(X)$ homeomorphically to $\Sing(\mathcal{H})$, i.e., $\beta(F_s(X)) \cong \Sing(\mathcal{H})$.
\end{enumerate}
\end{lemma}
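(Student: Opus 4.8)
The plan is to exploit the explicit local presentation of $F_s(B)$ furnished by the transversal projection $\beta$, and then to feed it into the elimination machinery of Corollary~\ref{crl:extension_theorem}, closing the argument with the fact that weakly equivalent algebras share the same singular locus. First I would fix the presentation as in~\ref{Main_Ideas_Mult}: writing $\dim B = \dim S = d$ and $B = S[\theta_1, \dotsc, \theta_m]$, each $\theta_i$ has a minimal polynomial $f_i(Z_i) \in S[Z_i]$ of degree $d_i$, so with $C = S[Z_1, \dotsc, Z_m]$ one gets a closed immersion $X = \Spec(B) \hookrightarrow V^{(d+m)} = \Spec(C)$ and the differential Rees algebra $\G^{(d+m)} := \Diff(\G_V)$, where $\G_V = \mathcal{O}_{V^{(d+m)}}[f_1(Z_1)W^{d_1}, \dotsc, f_m(Z_m)W^{d_m}]$, representing $F_s(X)$. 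In particular $\Sing(\G^{(d+m)}) = F_s(X)$ and $\G_B = \overline{\G^{(d+m)}_{|_X}}$ by Definition~\ref{def:algebra_restringida_b}, and the projection $\pi : V^{(d+m)} \to \Spec(S)$ restricts to $\beta$ on $X$, with $\G^{(d+m)}$ containing every $f_i(Z_i)W^{d_i}$.

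For (1) I would apply Corollary~\ref{crl:extension_theorem} with $h = m$: the projection $\pi$ is $\G^{(d+m)}$-admissible, $\G^{(d)} := \G^{(d+m)} \cap S[W]$ is an elimination algebra, and the inclusion $\G^{(d)} \subset \G^{(d+m)}_{|_X}$ is finite. Since $B$ is excellent, $\G_B = \overline{\G^{(d+m)}_{|_X}}$ is finite over $\G^{(d+m)}_{|_X}$, hence $\G^{(d)} \subset \G_B$ is finite. One checks directly that $\G^{(d)} \subseteq \mathcal{H} \subseteq \G_B$ (an element $gW^j$ with $g \in S$ lying in $\G^{(d+m)}$ restricts to itself in $\G_B \cap S[W]$). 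As $\G_B$ is a finite module over the Noetherian ring $\G^{(d)}$, it is also finite over the intermediate algebra $\mathcal{H}$, which gives (1); the same sandwich shows $\mathcal{H}$ is a finite $\G^{(d)}$-module, hence a genuine $S$-Rees algebra.

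For (2) I would first identify the two singular loci. The finite inclusion $\G^{(d)} \subseteq \mathcal{H}$ is integral, so $\mathcal{H} \subseteq \overline{\G^{(d)}}$ and therefore $\overline{\mathcal{H}} = \overline{\G^{(d)}}$; since a Rees algebra is weakly equivalent to its integral closure (cf.\ \cite[Proposition~5.4]{EV}) and weak equivalence preserves the singular locus (Definition~\ref{def_weak_equiv}), this yields $\Sing(\mathcal{H}) = \Sing(\G^{(d)})$. Finally, because $\G^{(d+m)}$ is differential and $\pi$ is $\G^{(d+m)}$-admissible, the discussion in~\ref{imagen_sing} gives $\pi\bigl(\Sing(\G^{(d+m)})\bigr) = \Sing(\G^{(d)})$, while admissibility (Definition~\ref{def_admisible}) makes this a homeomorphism. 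Restricting $\pi$ to $X$ and using $\Sing(\G^{(d+m)}) = F_s(X)$ together with $\Sing(\G^{(d)}) = \Sing(\mathcal{H})$ produces the desired homeomorphism $\beta : F_s(X) \xrightarrow{\sim} \Sing(\mathcal{H})$.

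The main obstacle I anticipate is precisely the passage from the canonical elimination algebra $\G^{(d)} = \G^{(d+m)} \cap S[W]$, whose singular locus is controlled by admissibility, to the algebra $\mathcal{H} = \G_B \cap S[W]$ defined intrinsically from $\G_B$. Bridging the two requires the finiteness sandwich $\G^{(d)} \subseteq \mathcal{H} \subseteq \G_B$ established above, together with the observation that a finite (hence integral) extension of Rees algebras does not enlarge the integral closure and so leaves the singular locus unchanged.
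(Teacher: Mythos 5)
Your proof is correct and follows essentially the same route as the paper: both set up the presentation of \ref{Main_Ideas_Mult}, invoke Corollary~\ref{crl:extension_theorem} to sandwich $\mathcal{H}$ between the elimination algebra $\G^{(d)}$ and $\G_B$, and then use \ref{imagen_sing} together with admissibility to identify $\Sing(\mathcal{H})$ with $\beta(F_s(X))$. The only cosmetic difference is that the paper works with the integrally closed algebra $\mathcal{K}^{(d+n)}=\overline{\Diff(\G^{(d+n)})}$ and concludes the exact equality $\mathcal{H}=\mathcal{K}^{(d)}$, whereas you keep $\Diff(\G_V)$ and settle for agreement up to integral closure, which suffices for both claims.
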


\begin{proof}
Choose elements $\theta_1, \dotsc, \theta_n \in B$ such that $B = S[\theta_1, \dotsc, \theta_n]$. This presentation of $B$ induces a surjective morphism $S[T_1, \dotsc, T_n] \to B = S[\theta_1, \dotsc, \theta_n]$, where $T_1, \dotsc, T_n$ represent variables and $T_i \mapsto \theta_i$, and, in turn, it induces a closed immersion $\Spec(B) \hookrightarrow V^{(d+n)} = \Spec(S[T_1, \dotsc, T_n])$.
Let $K$ denote the field of fractions of $S$, and let $f_1(T_1), \dotsc, f_n(T_n)$ be the minimal polynomials of $\theta_1, \dotsc, \theta_n$ over $K$ respectively. As $S$ is regular, $f_i(T_i) \in S[T_i]$ (see \ref{Main_Ideas_Mult}). Set $N_i = \deg(f_i(T_i))$, and consider the Rees algebra
\[
	\G^{(d+n)}
	= \mathcal{O}_{V^{(d+n)}} \left [
		f_1(T_1) W^{N_1}, \dotsc, f_n(T_n) W^{N_n} \right ]
	\subset \mathcal{O}_{V^{(d+n)}}[W] .
\]%
Under these hypotheses, there is a commutative diagram, say
\begin{equation}  \label{diag:elim-GB-pre}
\xymatrix@R=15pt@C=0em {
	\G^{(d+n)} &
		V^{(d+n)} \ar[d]_-{\varphi} &
		\hskip2em &
		\Spec(B) \ar[lld]^-{\beta} \ar@{_(->}[ll] \\
	&
		\Spec(S) ,
} \end{equation}%
and it can be proved that $\G^{(d+n)}$ represents $F_s(X)$ in $V^{(d+n)}$ (see the discussion in \ref{Main_Ideas_Mult}).

Next, consider the Rees algebras
\[
	\mathcal{K}^{(d+n)}
	:= \overline{\Diff \left ( \G^{(d+n)} \right ) }
	\subset \mathcal{O}_{V^{(d+n)}}[W],
	\quad \text{ and } \quad
	\mathcal{K}^{(d)} := \mathcal{K}^{(d+n)} \cap S[W].
\]%
Recall that, by Definition~\ref{def:algebra_restringida_b},
\[
	\G_B
	= \overline{ \left ( \mathcal{K}^{(d+n)}_{|_{B}} \right ) }
	\subset B[W].
\]%
Thus there are inclusions $\mathcal{K}^{(d)} \subset \mathcal{H} \subset \G_B$. Moreover, Corollary~\ref{crl:extension_theorem} says that $\G_B$ is finite over $\mathcal{K}^{(d)}$, and hence $\mathcal{K}^{(d)} \subset \mathcal{H}$ is also a finite extension of subalgebras of $S[W]$. Since $\mathcal{K}^{(d+n)}$ is integrally closed,  $\mathcal{K}^{(d)}$ is integrally closed. Therefore $\mathcal{H} = \mathcal{K}^{(d)}$, and (1) follows from Corollary~\ref{crl:extension_theorem}.

On the other hand, since $\mathcal{K}^{(d+n)}$ is weakly equivalent to $\G^{(d+n)}$,  $\mathcal{K}^{(d+n)}$ represents $F_s(B)$ over $V^{(d+n)}$. As $\Spec(B) \to \Spec(S)$ is transversal, the commutativity of \eqref{diag:elim-GB-pre} implies that $\beta$ is a $\mathcal{K}^{(d+n))}$-admissible projection, and $\mathcal{H} = \mathcal{K}^{(d)}$ is an elimination algebra of $\mathcal{K}^{(d+n)}$ (see Definition~\ref{def:an-elimination-algebra}). Then  $\Sing(\mathcal{H}) \cong \Sing{(\mathcal{K}^{(d+n)})} = F_s(B)$ (see \ref{imagen_sing}), which proves (2).
\end{proof}

\begin{lemma} \label{lm:elim-GB-blowup}
Let $S \subset B$ be a finite extension of domains over a perfect field $k$ of generic rank $s = \max\mult(B)$, where $S$ is regular and $B$ is singular, and let $\mathcal{H} := \G_B \cap S[W]$ be as in Lemma~\ref{lm:elim-GB}. Consider a prime ideal $P \in F_s(B)$ such that $B / P$ is regular, and set $\mathfrak{p} = P \cap S \in \Sing(\mathcal{H})$. Recall that, in this setting, there is a natural commutative diagram as follows,
\[ \xymatrix@R=15pt @C=.5ex{
	& \Spec(B) \ar[d] & \quad &
		X_1 = \Bl_P(B) \ar[ll] \ar[d]  &  \\
	\mathcal{H}  &  \Spec(S) &&
		Z_1 = \Bl_{\mathfrak{p}}(S)\ar[ll] ,  & \mathcal{H}_1, 
} \]%
where the vertical arrows are finite morphisms, and $\mathcal{H}_1 \subset \mathcal{O}_{Z_1}[W]$ denotes the transform of $\mathcal{H}$ \textup{(}in the sense of \ref{weaktransforms}\textup{)}. Assume that $F_s(X_1) \neq \emptyset$  \textup{(}i.e., $\max\mult(X_1) = \max\mult(X) = s$\textup{)}, and let $\G_{X_1} \subset \mathcal{O}_{X_1}[W]$ denote the intrinsic algebra attached to $F_s(X_1)$. Then there is an inclusion of algebras $\mathcal{H}_1 \subset \G_{X_1}$ \textup{(}which in general is not finite\textup{)}.
\end{lemma}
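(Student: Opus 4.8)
The plan is to lift the whole situation to the regular ambient space underlying $\G_B$, perform the blow-up there, and then transport the resulting inclusion down to $X_1$ by restriction. First I would recall the presentation used in the proof of Lemma~\ref{lm:elim-GB}: a choice of generators $B = S[\theta_1,\dots,\theta_n]$ yields a closed immersion $X = \Spec(B)\hookrightarrow V^{(d+n)} = \Spec(S[T_1,\dots,T_n])$ and a Rees algebra $\G^{(d+n)}$ representing $F_s(X)$, and I set $\mathcal{K}^{(d+n)} := \overline{\Diff(\G^{(d+n)})}$, so that $\mathcal{H} = \mathcal{K}^{(d+n)}\cap S[W]$ and $\beta : V^{(d+n)} \to \Spec(S)$ is $\mathcal{K}^{(d+n)}$-admissible (the absolute differential algebra $\mathcal{K}^{(d+n)}$ is $\beta$-differential by Remark~\ref{elimination_abs}). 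The regular center $P\subset B$ corresponds to a regular permissible center $Y\subset\Sing(\mathcal{K}^{(d+n)}) = F_s(X)$ whose strict transform recovers $X_1$, and whose image $\beta(Y)$ is the center defined by $\mathfrak{p}$; this enriches the diagram of the statement to
\[ \xymatrix@R=15pt{ V^{(d+n)} \ar[d]_{\beta} & V^{(d+n)}_1 \ar[l] \ar[d]^{\beta_1} \\ \Spec(S) & Z_1, \ar[l] } \]
where by \ref{eliminacion_blow_ups}~(i) the morphism $\beta_1$ is $\mathcal{K}^{(d+n)}_1$-admissible and, by Remark~\ref{demo_abiertos}, defined on an open $U^{(n)}_1\subset V^{(d+n)}_1$ containing $F_s(X_1)$.

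The first key step is to compute $\G_{X_1}$ from this data. Since $F_s(X_1)\neq\emptyset$, i.e. $\max\mult(X_1)=s$, the transform rule for local presentations (Definition~\ref{d_representable}) gives $\Sing(\G^{(d+n)}_1) = \Max\mult(X_1) = F_s(X_1)$, so $(V^{(d+n)}_1,\G^{(d+n)}_1)$ is again a local presentation of $F_s(X_1)$. As $\mathcal{K}^{(d+n)}$ is weakly equivalent to $\G^{(d+n)}$ (see \ref{Dif_Algebras} and \ref{integral_closure_Rees}) and weak equivalence is inherited by transforms (Remark~\ref{rmk:tree-of-transformations}), $\mathcal{K}^{(d+n)}_1$ is weakly equivalent to $\G^{(d+n)}_1$; hence $\overline{\Diff(\mathcal{K}^{(d+n)}_1)} = \overline{\Diff(\G^{(d+n)}_1)}$ by Theorem~\ref{thm:canonical-indiana}, and so, by Definition~\ref{def:algebra_restringida_b},
\[ \G_{X_1} = \overline{\Diff(\G^{(d+n)}_1)|_{X_1}} = \overline{\Diff(\mathcal{K}^{(d+n)}_1)|_{X_1}}. \]
In particular $(\mathcal{K}^{(d+n)}_1)|_{X_1}\subset \G_{X_1}$ and $\G_{X_1}$ is integrally closed in $\mathcal{O}_{X_1}[W]$.

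The second key step is to locate $\mathcal{H}_1$ and restrict. By \ref{eliminacion_blow_ups}~(iii) the transform $\mathcal{H}_1$ of the elimination algebra $\mathcal{H}$ is itself an elimination algebra of $\mathcal{K}^{(d+n)}_1$, so it has the same integral closure as $\mathcal{K}^{(d+n)}_1\cap\mathcal{O}_{Z_1}[W]$; in particular $\mathcal{H}_1$ is integral over $\mathcal{K}^{(d+n)}_1\cap\mathcal{O}_{Z_1}[W]\subset\mathcal{K}^{(d+n)}_1$. I would then restrict through the finite morphism $\rho : X_1 \hookrightarrow V^{(d+n)}_1 \xrightarrow{\beta_1} Z_1$, which is the vertical arrow of the statement and is injective on structure sheaves, being dominant of generic rank $s$. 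Since a ring homomorphism preserves integral dependence, $\rho^*(\mathcal{H}_1)$ is integral over $\rho^*\bigl(\mathcal{K}^{(d+n)}_1\cap\mathcal{O}_{Z_1}[W]\bigr)$, which is contained in $(\mathcal{K}^{(d+n)}_1)|_{X_1}\subset\G_{X_1}$ (it suffices to argue over $U^{(n)}_1$, which contains $F_s(X_1)$ and to which $\G_{X_1}$ is intrinsic). Because $\G_{X_1}$ is integrally closed, this forces $\rho^*(\mathcal{H}_1)\subset\G_{X_1}$, which is the asserted inclusion $\mathcal{H}_1\subset\G_{X_1}$.

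The step I expect to be the main obstacle is the first one: guaranteeing that the ambient transform $\G^{(d+n)}_1$ still represents $F_s(X_1)$ and that $\G_{X_1}$ can be read off from it. This is exactly where the hypothesis $F_s(X_1)\neq\emptyset$ enters, and where the compatibility of weak equivalence and of the canonical representative $\overline{\Diff(-)}$ with permissible transforms is essential. Finally, to match the statement I would stress that the inclusion is in general not finite: in positive characteristic $\Sing(\mathcal{H}_1)$ may be strictly larger than $\beta_1\bigl(F_s(X_1)\bigr)$ (cf. \ref{imagen_sing}~(ii) and Example~\ref{ex:elim-not-strongly}), so $\mathcal{H}_1$ carries information beyond the maximum multiplicity locus of $X_1$ and cannot be finite over $\G_{X_1}$.
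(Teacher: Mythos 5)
Your proposal is correct and follows essentially the same route as the paper: lift to the ambient $V^{(d+n)}$ with the integrally closed differential algebra $\mathcal{K}^{(d+n)}$, blow up the three spaces compatibly, note that $\mathcal{K}^{(d+n)}_1$ still represents $F_s(X_1)$ when $F_s(X_1)\neq\emptyset$, and conclude $\mathcal{H}_1\subset\G_{X_1}$ by restriction. The only difference is that you spell out, via \ref{eliminacion_blow_ups}~(iii) and the integral closedness of $\G_{X_1}$, the step the paper compresses into ``it can be checked that $\mathcal{H}_1\subset\mathcal{K}^{(d+n)}_1$,'' which is a harmless (indeed helpful) elaboration.
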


\begin{proof}
Following the proof of Lemma~\ref{lm:elim-GB}, one can construct a commutative diagram as follows,
\[ \xymatrix @R=15pt @C=0em {
	\mathcal{K}^{(d+n)} &
		V^{(d+n)} \ar[d]_{\beta} &
		\hskip2em &
		\Spec(B) \ar[lld] \ar@{_(->}[ll] \\
	\mathcal{H} &
		\Spec(S) ,
} \]%
where $\mathcal{K}^{(d+n)}$ is a differential and integrally closed Rees algebra representing $F_s(B)$ in $V^{(d+n)}$, and $\mathcal{H} = \mathcal{K}^{(d+n)} \cap S[W] = \G_B \cap S[W]$ is an elimination algebra of $\mathcal{K}^{(d+n)}$ over $S$.

Note that $P$ defines a closed regular center contained in $F_s(B)$, say $Y$, which can also be regarded as regular center contained in $\Sing(\mathcal{K}^{(d+n)}) \subset V^{(d+n)}$. Moreover, $\beta(Y) \subset \Sing(\mathcal{H})$ is the regular center defined by the prime $\mathfrak{p}$ mentioned in the lemma. In this way, after blowing up $\Spec(B)$, $V^{(d+n)}$ and $\Spec(S)$ along these centers, we obtain a commutative diagram
\[ \xymatrix@R=1ex@C=0em{
	(V^{(d+n)}, \mathcal{K}^{(d+n)}) \ar[dddd] &
		&
		&
		(V^{(d+n)}_1,\mathcal{K}^{(d+n)}_1) \ar'[dd][dddd] \ar[lll] &
		\\
	\\
	&
		\Spec(B) \ar@{_(->}[uul] \ar[ddl] &
		\qquad &
		&
		X_1 = \Bl_P(B) \ar@{_(->}[uul] \ar[ddl] \ar[lll] \\
	\\
	(\Spec(S),\mathcal{H}) &
		&
		&
		(Z_1 = \Bl_{\mathfrak{p}}(S),\mathcal{H}_1), \ar[lll] 
} \]%
where $\mathcal{K}^{(d+n)}_1$ and $\mathcal{H}_1$ denote the transforms of $\mathcal{K}^{(d+n)}$ and $\mathcal{H}$ respectively. Moreover, locally at points of $\Sing(\mathcal{K}^{(d+n)}_1)$, there is an inclusion $\mathcal{O}_{Z_1} \subset \mathcal{O}_{V^{(d+n)}_1}$, and it can be checked that $\mathcal{H}_1 \subset \mathcal{K}^{(d+n)}_1$. In addition, if $F_s(X_1) \neq \emptyset$, then $\mathcal{K}^{(d+n)}_1$ represents $F_s(X_1)$ in $V_1^{(d+n)}$. Finally, $\mathcal{H}_1 \subset \G_{X_1}$ because  by Definition~\ref{def:algebra_restringida_b},  we have that
 $\G_{X_1}
	= \overline{\Diff(\mathcal{K}^{(d+n)}_1) \rvert_{X_1}}$. 
\end{proof}

\begin{proposition} \label{prop:transversality-check}
Let $B$ be a singular domain   of finite type   over a perfect field $k$ with maximum multiplicity $s > 1$, and let $\G_B \subset B[W]$ denote the intrinsic Rees algebra attached to $F_s(B)$ in the sense of Definition~\ref{def:algebra_restringida_b}. Let $B' = B[\theta_1, \dotsc, \theta_m]$ be a finite and dominant extension of $B$ of generic rank $r$. If $\theta_1 W, \dotsc, \theta_m W \in B'[W]$ are integral over $\G_B \subset B[W]$, then $\max\mult(B') = rs$,
and therefore the morphism $\beta : \Spec(B') \to \Spec(B)$ is transversal. Moreover, in such case, $\beta$ induces a natural homeomorphism between $F_{rs}(B')$ and $F_s(B)$.
\end{proposition}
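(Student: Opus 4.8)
The plan is to reduce the whole statement to a comparison of two elimination algebras over a common regular base, and to read off both $\max\mult(B')=rs$ and the homeomorphism from that comparison. First I would pass to the affine situation and, fixing a point $\xi\in F_s(B)$, replace $B$ by a suitable \'etale neighbourhood so that (as in \ref{Main_Ideas_Mult} and Lemma~\ref{lm:elim-GB}) there is a regular subalgebra $S\subset B$ with $S\subset B$ finite of generic rank $s=\max\mult(B)$ and $\Spec(B)\to\Spec(S)$ transversal; let $K$ be the field of fractions of $S$ and set $d=\dim B=\dim B'$. Since $[K(B'):K(B)]=r$, the inclusion $S\subset B'$ is finite of generic rank $rs$, so Zariski's bound \eqref{eq222} already gives $\max\mult(B')\le rs$; the entire content is to produce one point where this bound is attained. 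Writing $B=S[\omega_1,\dots,\omega_n]$ and $B'=S[\omega_1,\dots,\omega_n,\theta_1,\dots,\theta_m]$, and letting $f_i$ (resp.\ $g_j$) be the minimal polynomial over $K$ of $\omega_i$ (resp.\ $\theta_j$), of degrees $d_i$ and $e_j$, I would form the differential Rees algebras $\G^{(d+n)}=\overline{\Diff(\mathcal{O}[f_iW^{d_i}])}$ on $V^{(d+n)}=\Spec(S[Z_1,\dots,Z_n])$ and $\G^{(d+n+m)}=\overline{\Diff(\mathcal{O}[f_iW^{d_i},g_jW^{e_j}])}$ on $V^{(d+n+m)}=\Spec(S[Z_1,\dots,Z_{n+m}])$. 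By \ref{Main_Ideas_Mult} and Theorem~\ref{Equim_Orlando}, $\Sing(\G^{(d+n)})=F_s(B)$ and $\G_B=\overline{\G^{(d+n)}|_B}$, while a point $\xi'\in X'\subset V^{(d+n+m)}$ has multiplicity $rs$ if and only if $\xi'\in\Sing(\G^{(d+n+m)})$; thus $\Sing(\G^{(d+n+m)})=F_{rs}(B')$, a set which is non-empty exactly when $\max\mult(B')=rs$.

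Next I would eliminate all the $Z$-variables, projecting both ambient spaces onto $\Spec(S)$. By Corollary~\ref{crl:extension_theorem} (applied to the monic polynomials $f_i,g_j$ in distinct variables), both projections are admissible and $\mathcal{H}=\G^{(d+n)}\cap S[W]$ and $\mathcal{H}'=\G^{(d+n+m)}\cap S[W]$ are the corresponding elimination algebras. Since $\G^{(d+n)}$ and $\G^{(d+n+m)}$ are differential, \ref{imagen_sing} gives $\Sing(\mathcal{H})=\beta(\Sing(\G^{(d+n)}))$ and $\Sing(\mathcal{H}')=\beta(\Sing(\G^{(d+n+m)}))$; in particular $\Sing(\G^{(d+n+m)})\ne\emptyset$ if and only if $\Sing(\mathcal{H}')\ne\emptyset$, and $\Sing(\mathcal{H})\ne\emptyset$ because $F_s(B)\ne\emptyset$. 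Hence the assertion $\max\mult(B')=rs$ is equivalent to $\Sing(\mathcal{H}')\ne\emptyset$, which I would obtain from the key claim that $\mathcal{H}$ and $\mathcal{H}'$ have the same integral closure, and therefore the same singular locus, in $S[W]$.

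The inclusion $\mathcal{H}\subset\mathcal{H}'$ is immediate from $\G^{(d+n)}\subset\G^{(d+n+m)}$. The reverse integral dependence is where the hypothesis enters, and I expect it to be the main obstacle. The idea is to show that $\G^{(d+n+m)}|_{B'}$ is integral over the image of $\G^{(d+n)}|_B$ in $B'[W]$: since the minimal polynomials restrict to $0$ on $X$ and $X'$ (that is, $f_i(\omega_i)=0$ and $g_j(\theta_j)=0$), the restriction $\G^{(d+n+m)}|_{B'}$ is generated by the restrictions of the differential operators applied to the $f_i$ and to the $g_j$; the generators produced by the $g_j$ restrict to elements integral over $\G_B$, the degree-one one being $\theta_jW$ itself (after a harmless normalization of $\theta_j$, which does not change $B'$), precisely by the hypothesis, while the higher-order ones are dominated by it. Granting this, Lemma~\ref{lm:elim-GB}(1), namely that $\G_B$ is finite over $\mathcal{H}$, together with the finiteness of the eliminations in Corollary~\ref{crl:extension_theorem}, lets me intersect with $S[W]$ and conclude that $\mathcal{H}'$ is integral over $\mathcal{H}$; hence $\overline{\mathcal{H}}=\overline{\mathcal{H}'}$ and $\Sing(\mathcal{H}')=\Sing(\mathcal{H})\ne\emptyset$. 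Therefore $\max\mult(B')=rs$, and $\beta$ is transversal by Definition~\ref{def:transversal}. The delicate point, especially in positive characteristic, is controlling the full differential saturation of the $g_j$ after restriction: the Hasse--Schmidt derivatives and the normalizing translations must be arranged so that the single degree-one integral-dependence hypothesis on $\theta_jW$ governs all the higher generators.

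Finally, for the homeomorphism I would argue as follows. By Proposition~\ref{homeomorphism} the transversal morphism $\beta$ already maps $F_{rs}(B')$ homeomorphically onto its image inside $F_s(B)$, so only surjectivity remains. Writing $\beta_0:\Spec(B)\to\Spec(S)$ and $\beta'_0:\Spec(B')\to\Spec(S)$ for the two projections, one has $\beta'_0=\beta_0\circ\beta$, and the elimination statements above give a homeomorphism $\beta_0:F_s(B)\xrightarrow{\sim}\Sing(\mathcal{H})$ (Lemma~\ref{lm:elim-GB}(2)) and a surjection $\beta'_0:F_{rs}(B')\twoheadrightarrow\Sing(\mathcal{H}')=\Sing(\mathcal{H})$ (by \ref{imagen_sing}, as $\G^{(d+n+m)}$ is differential and $\Sing(\G^{(d+n+m)})=F_{rs}(B')$). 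Consequently $\beta(F_{rs}(B'))=\beta_0^{-1}\bigl(\beta'_0(F_{rs}(B'))\bigr)=\beta_0^{-1}(\Sing(\mathcal{H}))=F_s(B)$, so $\beta$ is onto $F_s(B)$; combined with injectivity and properness from Proposition~\ref{homeomorphism}, this makes $\beta:F_{rs}(B')\to F_s(B)$ a homeomorphism, as required.
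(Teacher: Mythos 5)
Your overall architecture (reduce to $\Spec(S)$, compare elimination algebras, read off both the multiplicity and the surjectivity from $\Sing(\mathcal{H})=\Sing(\mathcal{H}')$) could in principle work, and your final step recovering surjectivity from \ref{imagen_sing} and Lemma~\ref{lm:elim-GB}(2) is sound \emph{granting} the key claim. But that key claim --- that $\mathcal{H}'=\G^{(d+n+m)}\cap S[W]$ is integral over $\mathcal{H}=\G^{(d+n)}\cap S[W]$ --- is exactly where the proof has to happen, and you do not prove it; you yourself flag it as ``the main obstacle'' and ``the delicate point.'' The difficulty is real: the hypothesis is a statement about the single degree-one elements $\theta_jW$ \emph{downstairs} in $B'[W]$, whereas $\mathcal{H}'$ is the elimination of the full differential saturation of $\mathcal{O}[f_iW^{d_i},g_jW^{e_j}]$ \emph{upstairs} on $V^{(d+n+m)}$; its generators involve all the Hasse--Schmidt derivatives of the $g_j$ (hence, in effect, all the coefficients of the minimal polynomials and, in positive characteristic, further weighted expressions in them), and restriction to $B'$ loses information (e.g.\ $g_j(\theta_j)=0$). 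Passing from ``$\theta_jW$ integral over $\G_B$'' to ``every such generator is dominated'' would require a separate argument (a valuative control of the coefficients of the minimal polynomial of an element integral over a graded algebra, plus a comparison of the elimination of a join with the join of eliminations), none of which is supplied. Since the proposition is used in the characteristic-free Theorem~\ref{rango_r}, and elimination algebras are precisely the tool whose behaviour degrades in positive characteristic elsewhere in the paper, this gap cannot be waved away.

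The paper's proof avoids all of this with a pointwise argument that never mentions $\mathcal{H}'$. Fix $P\in F_s(B)$ and set $\mathfrak{p}=P\cap S$. By Lemma~\ref{lm:elim-GB}, $\G_B$ is finite over $\mathcal{H}=\bigoplus_l J_lW^l$ and $\mathfrak{p}\in\Sing(\mathcal{H})$; hence each $\theta_i$ satisfies $\theta_i^N+a_1\theta_i^{N-1}+\dotsb+a_N=0$ with $a_l\in J_l$, and $\nu_{\mathfrak{p}}(a_l)\geq l$ turns this into a relation of integral dependence of $\theta_i$ over $\mathfrak{p}B'$. Consequently every prime of $B'$ over $\mathfrak{p}$ contains the $\theta_i$, and since $B'/(PB'+\langle\theta_1,\dotsc,\theta_m\rangle)=B/P$ is a domain, $P'=PB'+\langle\theta_1,\dotsc,\theta_m\rangle$ is the unique prime over $P$, is rational over it, and $PB'_{P'}$ is a reduction of $P'B'_{P'}$. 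Conditions (i)--(iii) of \ref{condicion_estrella} then force $e_{B'_{P'}}(P'B'_{P'})=rs$, so $P'\in F_{rs}(B')$. Running this over all $P\in F_s(B)$ gives simultaneously $F_{rs}(B')\neq\emptyset$ (hence transversality) and surjectivity of $\beta$ on the maximal multiplicity strata, which together with Proposition~\ref{homeomorphism} yields the homeomorphism. If you want to rescue your route, you would have to supply the missing comparison of $\mathcal{H}$ and $\mathcal{H}'$; the paper's specialization argument is both shorter and manifestly characteristic-free.
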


\begin{proof}
As $B \subset B'$ has generic rank $r$, Zarisiki's formula (Theorem~\ref{MultForm}) says that $\max\mult(B') \leq r \cdot \max\mult(B) = rs$.
Thus, in order to check the transversality of $\beta$, we just need to show that $F_{rs}(B') \neq \emptyset$. To this end, we will argue as follows. Consider a prime ideal $P \in F_s(B)$. Since the extension $B \subset B'$ is finite and dominant, $\beta$ is surjective, and therefore there exists at least a prime $P' \subset B'$ sitting on $P$. We shall show that $P' \in F_{rs}(B')$. Note that this property implies that:
\begin{enumerate}
\item $F_{rs}(B') \neq \emptyset$, and hence $\beta$ is transversal;
\item As $\beta$ is transversal, $F_{rs}(B')$ is mapped homeomorphically onto $\beta(F_{rs}(B'))$, and $\beta(F_{rs}(B')) \subset F_s(B)$ (see Corollary~\ref{homeomorphism});
\item $F_{rs}(B')$ maps surjectively onto $F_s(B)$, as the previous argument holds for any $P \in F_s(B)$;
\item As a consequence of (2) and (3), the set $F_{rs}(B')$ is mapped homeomorphically to $F_s(B)$.
\end{enumerate}
Thus, in order to prove the lemma, it suffices to show that any prime $P \in F_s(B)$ is dominated by a prime $P' \in F_{rs}(B')$.

Fix a prime $P \in F_s(B)$, and let $P' \subset B'$ be a prime ideal sitting on $P$. After replacing $B$ by a suitable \'etale neighborhood, we may assume that there is a regular $k$-algebra contained in $B$, say $S$, so that $S \subset B$ is a finite extension of generic rank $s = \max\mult(B)$ (see \ref{Main_Ideas_Mult}). In other words, the induced map $\beta : \Spec(B) \to \Spec(S)$ is transversal. Consider the Rees algebra $\mathcal{H} := \G_B \cap S[W]$.
By Lemma~\ref{lm:elim-GB}, we have that $\G_B$ contains and is finite over $\mathcal{H}$. Set $\mathcal{H} = \bigoplus_{l} J_l W^l \subset S[W]$. As $\theta_1 W, \dotsc, \theta_m W \in B'[W]$ are integral over $\G_B$, and $\G_B$ is integral over $\mathcal{H}$, then $\theta_1 W, \dotsc, \theta_m W$ are integral over $\mathcal{H}$. Hence each $\theta_i$ must satisfy a relation of integral dependence of the form
\begin{equation} \label{eq:int-dep-rel}
	\theta_i^N + a_1 \theta_i^{N-1} + \dotsb + a_N = 0;
	\quad a_j \in J_j.
\end{equation}

Consider the prime ideal $\mathfrak{p} = P \cap S$. Since $P \in F_s(B)$ and $\Spec(B) \to \Spec(S)$ is a finite transversal morphism, $P$ is the unique prime of $\Spec(B)$ sitting on $\mathfrak{p}$, and hence $B_P = B \otimes_S S_{\mathfrak{p}}$. Thus we may assume without loss of generality that $S = S_\mathfrak{p}$, and $B = B_P$. By Lemma~\ref{lm:elim-GB}, we have that $\mathfrak{p} \in \Sing(\mathcal{H})$. Therefore, as each $a_j$ in \eqref{eq:int-dep-rel} belongs to $J_j$, we deduce that $\nu_{\mathfrak{p}}(a_j) \geq j$ for all $j$. In this way, \eqref{eq:int-dep-rel} can also be regarded as a relation of integral dependence of $\theta_i$ over the ideal $\mathfrak{p} B' \subset B'$. Thus, if a prime ideal $Q \subset B'$ contains $\mathfrak{p} B'$, then $\theta_1, \dotsc, \theta_m \in Q$. Note also that
\[
	B' / (PB' + \langle \theta_1, \dotsc, \theta_m \rangle)
	= B / P.
\]%
Since $B / P$ is a domain, we deduce that $PB' + \langle \theta_1, \dotsc, \theta_m \rangle$ is a prime ideal in $B'$, and hence $P' = PB' + \langle \theta_1, \dotsc, \theta_m \rangle$. As a consequence, we have that:
(i)  $P'$ is the unique prime of $B'$ dominating $P$; 
(ii) $P'$ is rational over $P$; and
(iii)  $P B'_{P'}$ is a reduction of $P' B'_{P'}$. In virtue of \ref{condicion_estrella}, this implies that $P' \in F_{rs}(P')$, which proves the lemma.
\end{proof}

\begin{proposition} \label{prop:strong-transversality-check}
Under the same hypotheses of Proposition~\ref{prop:transversality-check}, the morphism $\beta : \Spec(B') \to \Spec(B)$ is strongly transversal.
\end{proposition}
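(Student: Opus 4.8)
The plan is to verify strong transversality by means of the surjectivity criterion of Remark~\ref{rmk:strongly-transversal-equiv-surjective}: since Proposition~\ref{prop:transversality-check} already provides that $\beta$ maps $F_{rs}(B')$ homeomorphically onto $F_s(B)$, it remains only to show that this property is preserved by every $F_s$-local sequence on $X = \Spec(B)$. By Theorem~\ref{thm:Frs-sequence} (together with Remark~\ref{rmk:Frs-local-seqs}) such a sequence already lifts to a commutative diagram with finite vertical maps $\beta_i$ of generic rank $r$, so the content is to check that each $\beta_i$ remains transversal with $F_{rs}(X'_i) \cong F_s(X_i)$. Arguing by induction on the length of the sequence, and noting that open restrictions and the product with $\mathbb{A}^1_k$ are smooth transforms under which the hypotheses of Proposition~\ref{prop:transversality-check} are simply pulled back, it suffices to treat a single permissible blow up $X \leftarrow X_1$ along a regular center $Y \subset F_s(X)$ and to prove that $\beta_1 : X'_1 \to X_1$ again satisfies the hypotheses of Proposition~\ref{prop:transversality-check}.

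To this end I would first place myself in the elimination setting of Lemma~\ref{lm:elim-GB}: after passing to an \'etale neighbourhood, fix a regular subalgebra $S \subset B$ making $\Spec(B) \to \Spec(S)$ finite transversal of generic rank $s$, and set $\mathcal{H} = \G_B \cap S[W] = \bigoplus_l J_l W^l$, which is finite over $\G_B$ and satisfies $\beta(F_s(B)) \cong \Sing(\mathcal{H})$. Let $P \subset B$ be the prime defining $Y$ and $\mathfrak{p} = P \cap S \in \Sing(\mathcal{H})$. Blowing up $S$ at $\mathfrak{p}$ produces $Z_1 = \Bl_{\mathfrak{p}}(S)$, its exceptional divisor $H_1$, and the transform $\mathcal{H}_1 = \bigoplus_l J_{l,1} W^l$, characterized by $J_l \mathcal{O}_{Z_1} = I(H_1)^l J_{l,1}$ (see \ref{weaktransforms}); Lemma~\ref{lm:elim-GB-blowup} then yields the inclusion $\mathcal{H}_1 \subset \G_{X_1}$.

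The heart of the argument is the transformation of the integral-dependence relations. Since $\theta_i W$ is integral over $\G_B$ and $\G_B$ is finite over $\mathcal{H}$, each $\theta_i$ satisfies a relation
\[
	\theta_i^N + a_1 \theta_i^{N-1} + \dotsb + a_N = 0, \qquad a_j \in J_j,
\]
exactly as in the proof of Proposition~\ref{prop:transversality-check}. Because $\mathfrak{p} \in \Sing(\mathcal{H})$ one has $\nu_{\mathfrak{p}}(a_j) \geq j$, so on the relevant chart of the blow up, writing $t$ for a local equation of $H_1$, one may factor $a_j = t^j a_{j,1}$ with $a_{j,1} \in J_{j,1}$. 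Dividing the relation by $t^N$ and setting $\eta_i = \theta_i / t$ gives
\[
	\eta_i^N + a_{1,1} \eta_i^{N-1} + \dotsb + a_{N,1} = 0, \qquad a_{j,1} \in J_{j,1},
\]
which is precisely a relation of integral dependence of $\eta_i W$ over $\mathcal{H}_1$, and hence over $\G_{X_1}$. A chart-by-chart comparison of the blow ups of $B$ and $B'$ (as in Remark~\ref{rmk:cartas_blowup}) identifies the corresponding chart of $X'_1$ with $\Spec(B'_1)$, where $B'_1 = B_1[\eta_1, \dotsc, \eta_m]$. Thus $\beta_1$ satisfies the hypotheses of Proposition~\ref{prop:transversality-check} whenever $F_s(X_1) \neq \emptyset$ (and if $F_s(X_1) = \emptyset$ then $F_{rs}(X'_1) = \emptyset$ by transversality, so nothing remains to check); applying that proposition gives $F_{rs}(X'_1) \cong F_s(X_1)$, and the induction closes.

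I expect the main obstacle to be showing that the hypotheses of Proposition~\ref{prop:transversality-check} genuinely reproduce themselves after a permissible blow up: concretely, the exact factorization $a_j = t^j a_{j,1}$ with $a_{j,1} \in J_{j,1}$ (which rests on $\mathfrak{p} \in \Sing(\mathcal{H})$ together with the transform rule of \ref{weaktransforms}) and the identification $B'_1 = B_1[\eta_1, \dotsc, \eta_m]$ obtained by matching the blow-up charts upstairs and downstairs. All of this is characteristic-free, since it only uses the elimination algebra $\mathcal{H}$ over the regular base $S$ and the behavior of integral dependence under permissible transforms, so the conclusion holds in arbitrary characteristic.
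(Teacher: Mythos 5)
Your proposal is correct and follows essentially the same route as the paper's proof: reduce to checking that the hypotheses of Proposition~\ref{prop:transversality-check} reproduce themselves after one permissible blow up, use the elimination algebra $\mathcal{H} = \G_B \cap S[W]$ from Lemma~\ref{lm:elim-GB}, divide the integral-dependence relations $\theta_i^N + a_1\theta_i^{N-1} + \dotsb + a_N = 0$ by the $N$-th power of the exceptional equation on each chart (your $a_j = t^j a_{j,1}$ is exactly the paper's $\frac{a_j}{x_i^j}W^j \in \mathcal{H}_1$), identify $B'_i = B_i[\theta_1/x_i, \dotsc, \theta_m/x_i]$ via Remark~\ref{rmk:cartas_blowup}, and invoke Lemma~\ref{lm:elim-GB-blowup} for the inclusion $\mathcal{H}_1 \subset \G_{X_1}$. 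No substantive differences.
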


\begin{proof}
Proposition~\ref{prop:transversality-check} says that $\beta$ is transversal, and $\beta(F_{rs}(B')) = F_s(B)$. Thus we just need to check that the conditions of the lemma are preserved (locally) after blowing up.

Fix a prime ideal $P \in F_s(B)$ which defines a regular center in $\Spec(B)$, i.e., such that $B/P$ is regular, and let $P' \in F_{rs}(B')$ be the unique prime in $B'$ sitting on $P$. After replacing $B$ and $B'$ by suitable \'etale neighborhoods, we may assume that $B$ contains a regular subalgebra, say $S$, so that $S \subset B$ is a finite extension of generic rank $s = \max\mult(B)$. That is, so that the $\Spec(B) \to \Spec(S)$ is transversal. Then set $\mathfrak{p} = P \cap S$, and consider the Rees algebra $\mathcal{H} := \G_B \cap S[W]$. By Lemma~\ref{lm:elim-GB}, we have that $\mathfrak{p} \in \Sing(\mathcal{H})$, and hence it defines a permissible center for $\mathcal{H}$. Moreover, by Lemma~\ref{lm:blow-up-finite}, there are finite morphisms $\Bl_{P'}(B') \to \Bl_P(B) \to \Bl_{\mathfrak{p}}(S)$.

Fix a collection of generators of the ideal $\mathfrak{p} \subset S$, say $\mathfrak{p} = \langle x_1, \dotsc, x_t \rangle$. Then the blow up of $S$ along $\mathfrak{p}$, say $\Bl_{\mathfrak{p}}(S)$, can be covered by $t$ affine charts of the form $\Spec(S_i)$, with $S_i = S \left [ \frac{x_1}{x_i}, \dotsc, \frac{x_t}{x_i} \right ]$.
In addition, by Remark~\ref{rmk:cartas_blowup}, $\Bl_P(B)$ and $\Bl_{P'}(B')$ can be covered by affine charts of the form $\Spec(B_i)$ and $\Spec(B'_i)$ respectively, where $B_i$ contains and is finite over $S_i$, and $B'_i = B_i \left [ \frac{\theta_1}{x_i}, \dotsc, \frac{\theta_m}{x_i} \right ]$.
In order to check that the conditions of the lemma are preserved, we shall show that, if $F_s(B_i) \neq \emptyset$, then $\frac{\theta_1}{x_i}, \dotsc, \frac{\theta_m}{x_i} \in B'_i[W]$ are integral over $\G_{B_i}$ (the intrinsic Rees algebra attached to $F_s(B_i)$).

Let $\mathcal{H}_1$ denote the transform of $\mathcal{H}$ over the chart $\Spec(S_i) \subset \Bl_{\mathfrak{p}}(S)$, and suppose that $\mathcal{H} = \bigoplus_l J_l W^l \subset S[W]$. By Lemma~\ref{lm:elim-GB}, there is a finite inclusion $\mathcal{H} \subset \G_B$. Since $\theta_1 W, \dotsc, \theta_m W \in B'[W]$ are integral over $\G_B$, they are also integral over $\mathcal{H}$. Hence each $\theta_j$ satisfies an equation of integral dependence of the form $\theta_j^N + a_1 \theta_j^{N-1} + \dotsb + a_N = 0$
with $a_l \in J_l$. That is, with $a_l W^l \in \mathcal{H}$. Then one readily checks that $\frac{a_1}{x_i} W, \dotsc, \frac{a_N}{x_i^N} W^N \in \mathcal{H}_1$. Hence $\frac{\theta_j}{x_i} \in B'_i$ satisfies an equation of integral dependence of the form
\[
	\left( \frac{\theta_j}{x_i} \right)^N
		+ \frac{a_1}{x_i} \left( \frac{\theta_j}{x_i} \right)^{N-1}
		+ \dotsb
		+ \frac{a_N}{x_i^N}
	= 0,
\]%
which implies that $\frac{\theta_j}{x_i} W \in B'_i[W]$ is integral over $\mathcal{H}_1$. In addition, if $F_s(B_i) \neq \emptyset$, then $\mathcal{H}_1 \subset \G_{B_i}$ by Lemma~\ref{lm:elim-GB-blowup}. This proves that $\frac{\theta_j}{x_i} W \in B'_i[W]$ is integral over $\G_{B_i}$, and hence the conditions of the lemma are locally preserved.
\end{proof}




\begin{thebibliography}{www}

\bibitem{Carlos} C. Abad, 
`On the highest multiplicity locus of algebraic varieties and Rees algebras', {\em J. of Algebra}, 441 (2015) 294-313. 

\bibitem{TesisCarlos} C. Abad, `Multiplicity along embedded schemes and differential operators'. Thesis, Universidad Aut\'onoma de Madrid, 2017.

%




%
%
%
\bibitem{B} A. Benito, `The tau invariant and  elimination', {\em J. of Alg.}, 324 (2010) 1903-1920.    
%
%
%
%


\bibitem{Indiana} 
A. Bravo, M.L Garc\'{\i}a-Escamilla, O.E. Villamayor U., \textit{On Rees algebras and 
invariants for singularities over perfect fields}, {\em Indiana Univ. Math. J.}, 61, (2012), 3, 1201-1251. 	 


\bibitem{BrV} A. Bravo, O. E. Villamayor U., `Singularities in positive characteristic, stratification and simplification of the singular locus', {\em Adv. in Math.}, 224 (4) (2010) 1349-1418. 

\bibitem{notas_Austria} A. Bravo, O. E. Villamayor U.,  `On the behavior of the multiplicity on schemes:
stratification and blow ups', to appear in {\em The Resolution of Singular Algebraic Varieties}. Clay Mathematics Institute Proceedings (CMIP), Editors: Ellwood, Hauser, Mori, Schicho, 2014. 


\bibitem{CJS} V. Cossart, U. Jannsen, S. Saito, `Canonical embedded and non-embedded resolution of singularities for excellent two-dimensional schemes'. Prepint: arXiv:0905.2191. 


\bibitem{CP1} V. Cossart, O. Piltant, `Resolution of singularities of threefolds in positive characteristic. I. Reduction to local uniformization on Artin-Schreier and purely inseparable coverings',  {\em J. Algebra}, 320 (3) (2008)   1051-1082. 

%
%
\bibitem{Cut} S. D. Cutkosky, `Resolution of singularities of 3-folds in positive characteristic.',  {\em Amer. J. Math.}, 131 (2009), no. 1, 59-127. 

%
%

\bibitem{D} E. C. Dade, Multiplicity and monoidal transformations. Thesis, Princeton University, 1960.

\bibitem{EV}
S. Encinas, O. E. Villamayor U., {`Rees algebras and resolution of
singularities',} {\em Biblioteca Revista Matematica
Iberoamericana}, Proceedings del XVI-Coloquio Latinoamericano de
Algebra (Colonia, 2005) 63-85.
%
%
%
%
%
%



\bibitem{EGAIV}{A. Grothendieck and J. Dieudonn\'e}, 
	\textit {\'El\'ements de G\'eom\'etrie Alg\'ebrique},
	\textit  {IV \'Etude locale des sch\'emas et des morphismes de sch\'emas}, {Publications Math\'ematiques} I.H.E.S. No. 24 (1965) and No. 28 (1966).
	 
	

%
%
%


\bibitem{Hironaka64}  H.~Hironaka, `Resolution of singularities
 of
an algebraic variety over a field of characteristic zero I-II',
 {\em Ann.
Math.},  79 (1964) 109-326.

%
%
%
%
%
%

 



\bibitem{Hironaka70} H.Hironaka, 'Additive groups associated with points of a projective space', \emph{Ann. of Math.} 92 (2) (1970), 327-334.

%
%
%
%

%
%


\bibitem{Hironaka77} H. Hironaka,
`Idealistic exponent of a
 singularity', {\em Algebraic Geometry},  The
John Hopkins centennial
 lectures, Baltimore, John Hopkins University Press
(1977), 52-125.

\bibitem{Hironaka05} H. Hironaka, 
	`Three key theorems on infinitely near singularities', {\em Singularit\'es Franco-Japanaises, S\'emin. Congr.},
		 10 
	 Soc. Math. France, Paris, (2005), 87-126. 

\bibitem{IrenaHuneke} C. Huneke, I. Swanson, {\em Integral Closure of Ideals, Rings, and Modules}, 
London Mathematical Society Lecture Note Series 336. Cambridge University Press, 2006. 


\bibitem{kaw} H. Kawanoue, `Toward resolution of singularities
over a field of positive characteristic I'.{\em Publ. Res. Inst. Math. Sci.}, 
43 (2007) 819-909. 

\bibitem{kaM} H. Kawanoue,  K. Matsuki, Toward resolution of singularities over a field of positive characteristic (the idealistic 
filtration program) Part II. Basic invariants associated to the idealistic filtration and their properties. Publ. Res. Inst. Math. Sci., 46(2):359{422}, 2010.

%
%
%
%



\bibitem{Lipman1} J. Lipman, `Relative Lipschitz-saturation', {\em Amer. J.  Math.}  97 (1975) 791-813. 

\bibitem{Lipman2} J. Lipman, `Equimultiplicity, reduction, and blowing up', {\em Commutative Algebra.} Editor: R.N.Draper.
Lecture Notes in Pure and Appl. Math. 68
Marcel Dekker, New York (1982) 111-147.

%
%
%

%

\bibitem{Nobile14} A. Nobile, `Equivalence and resolution of singularities', {\em J. of Algebra}, 420 (2015) 161-185.

\bibitem{NR} D. G. Northcott and D. Rees, `Reduction of ideals in local rings'  {\em Proc. Camb. Philos. Soc.} 50 (1954) 145-158.

 
\bibitem{Oda1973}  T. Oda,
	`Hironaka's additive group scheme',
	{\em Number theory, algebraic geometry and commutative algebra, in honor of Yasuo Akizuki},
		  {Kinokuniya, Tokyo},  1973,  pp. {181--219}. 
		  
\bibitem{Oda1983} T. Oda, 
`Hironaka's additive group scheme. II' ,  {\em Publ. Res. Inst. Math. Sci.} 19 (1983), no. 3, 1163?1179.		  
		  
\bibitem{Oda1987} T. Oda, 
 `Infinitely very near singular points', {\em Complex analytic singularities},  Adv. Stud. Pure Math., 8, North-Holland, Amsterdam, 1987, pp. {363-404}.		  
		  
		  
		  
%
%
\bibitem{Orbanz} U. Orbanz, `Multiplicities and Hilbert functions under blowing up,'  {\em Man. Math.} 36 (1981), 179-186.

\bibitem{Raynaud} M. Raynaud, \textit{Anneaux Locaux Hens\'eliens}, 
Lectures Notes in Mathematics 169. Springer-Verlag, 
Berlin, Heidelberg, New York, (1970).

\bibitem{Rees}  D. Rees, `Transforms of local rings and a theorem of multiplicity,'  {\em Proc. Camb. Philos. Soc.} 57 (1961) 8-17.
%
%
%
%
%
%
%
%
%
\bibitem{VillaCordoba} O. E. Villamayor U., 
	\textit{Tschirnhausen transformations revisited and the multiplicity of the embedded hypersurface},
	Bolet\'{\i}n de la Real Academia Nacional de Ciencias de C\'ordoba, Argentina 
\textbf{65},  (2000), 
	 239--250. 

\bibitem{hpositive}
O. E. Villamayor U., {`Hypersurface singularities in positive
characteristic'}, {\em Adv.  in Math.} 213 (2007) 687-733.



\bibitem{positive} O. E. Villamayor U., `Elimination with applications 
to singularities in positive characteristic', 
{\em Publ. Res. Inst. Math. Sci.}, 44 (2)  (2008) 661-697. 

\bibitem{integraldifferential} O. E. Villamayor U.,  `Rees algebras on smooth schemes: integral
 closure and higher differential operators', \emph{Rev. Mat.
 Iberoamericana}, 24  (1) (2008) 213-242.

\bibitem{multiplicidad}  O. E. Villamayor U., `Equimultiplicity, algebraic elimination, and blowing-up', {\em Adv. in Math.}, 262, (2014), 313-369.

\bibitem{simplification-equimult} O. E. Villamayor U., `On the simplification of singularities by blowing up at equimultiple centers' (2017). Indiana Univ. Math. J. 67 No.1 (2018)221-266. 
\bibitem{ZS} O. Zariski, P. Samuel, {\em Commutative Algebra,}Vol.2 Van Nostrand, 
Princeton, 1960.



\end{thebibliography}
\end{document}